\theoremstyle{definition}
\newtheorem{theorem}{Theorem}
\newtheorem{definition}{Definition}
\newtheorem{proposition}{Proposition}
\newtheorem{lemma}{Lemma}
\newtheorem{corollary}{Corollary}
\newtheorem{remark}{Remark}
\newtheorem{example}{Example}
\newtheorem{question}{Question}
\newtheorem{conjecture}{Conjecture}
\begin{document}

\title{Determination of normalized extremal quasimodular forms of depth 1 with integral Fourier coefficients}

\author{Tomoaki Nakaya}

\address{Faculty of Mathematics, Kyushu University, 744, Motooka, Nishi-ku, Fukuoka, 819-0395, Japan}
\email{t-nakaya@math.kyushu-u.ac.jp}
\keywords{Extremal quasimodular forms; Fourier coefficients; hypergeometric series; modular differential equations.}
\subjclass[2010]{11F11, 11F25, 11F30, 33C05, 34M03}

\begin{abstract}
The main purpose of this paper is to determine all normalized extremal quasimodular forms of depth 1 whose Fourier coefficients are integers. 
By changing the local parameter at infinity from $q=e^{2\pi i \tau}$ to the reciprocal of the elliptic modular $j$-function, we prove that all normalized extremal quasimodular forms of depth 1 have a hypergeometric series expression and that integrality is not affected by this change of parameters.
Furthermore, by transforming these hypergeometric series expressions into a certain manageable form related to the Atkin(-like) polynomials and using the lemmas that appeared in the study of $p$-adic hypergeometric series by Dwork and Zudilin, the integrality problem can be reduced to the fact that a polynomial vanishes modulo a prime power, which we prove. 
We also prove that all extremal quasimodular forms of depth~1 with appropriate weight-dependent leading coefficients have integral Fourier coefficients by focusing on the hypergeometric expression of them. 
\end{abstract}

\maketitle

\section{Introduction} \label{sec:intro}

A quasimodular form of weight $w$ on the full modular group $\Gamma=SL_{2}(\mathbb{Z})$ is given as
\begin{align*}
\sum_{\substack{2\ell+4m+6n=w \\ \ell,m,n \geq 0}} C_{\ell,m,n} E_{2}^{\,\ell} E_{4}^{\,m} E_{6}^{\,n} \in QM_{*}(\Gamma) \coloneqq \mathbb{C}[E_{2},E_{4},E_{6}] . 
\end{align*}
Here $E_{k}=E_{k}(\tau)$ is the standard Eisenstein series on $\Gamma$ of weight $k$ defined by
\begin{align*}
E_{k}(\tau ) = 1 - \frac{2k}{B_{k}} \sum_{n=1}^{\infty} \sigma_{k-1}(n)  q^{n} \; (q=e^{2 \pi i \tau}), \quad \sigma_{k}(n) = \sum_{d \mid n} d^{k}, 
\end{align*}
where $\tau$ is a variable in the complex upper half plane $\mathfrak{H}$,  and $B_{k}$ is $k$-th Bernoulli number, e.g., $B_{2}=\tfrac{1}{6}, B_{4}= -\tfrac{1}{30}, B_{6}= \tfrac{1}{42}$. For the intrinsic definition of the quasimodular forms on a non-cocompact discrete subgroup of $SL_{2}(\mathbb{R})$, we refer to \cite[\S 5.3]{zagier2008elliptic}.
We denote the vector space of modular forms and cusp forms of weight $k$ on $\Gamma$ by $M_{k}(\Gamma)$ and $S_{k}(\Gamma)$, respectively. It is well known that $E_{k} \in M_{k}(\Gamma)$ for even $k\ge4$, but $E_{2}$ is not modular and is quasimodular. 
Any quasimodular form $f$ of weight $w$ can be uniquely written as 
\begin{align*}
f = \sum_{\ell=0}^{r} E_{2}^{\,\ell} f_{\ell}, \quad f_{\ell} \in M_{w-2\ell}(\Gamma), \quad  f_{r} \not= 0
\end{align*}
with $r \in \mathbb{Z}_{\ge0}$ and we call it depth of $f$.
Hence $E_{2}$ is the quasimodular form with minimum weight and depth on $\Gamma$. 
Let $QM_{w}^{(r)} = QM_{w}^{(r)}(\Gamma)$ denote the vector space of quasimodular forms of weight $w$ and depth $\le r$ on $\Gamma$. In particular, $QM_{w}^{(0)}(\Gamma)$ is equal to $M_{w}(\Gamma)$. (In the following, we often omit the reference to the group $\Gamma$.) 
From the fact $\dim_{\mathbb{C}} QM_{w}^{(r)} = \sum_{\ell=0}^{r} \dim_{\mathbb{C}} M_{w-2\ell}$, the generating function of the dimension of $QM_{w}^{(r)}$ is given by the following (See \cite{grabner2020quasimodular} for the explicit formula of $\dim_{\mathbb{C}} QM_{w}^{(r)}$.):
\begin{align*}
&{} \sum_{k=0}^{\infty} \dim_{\mathbb{C}} QM_{2k}^{(r)}\, T^{2k} = \frac{\sum_{\ell=0}^{r} T^{2\ell}}{(1-T^{4})(1-T^{6})} \\
&= \frac{1-T^{2(r+1)}}{(1-T^{2})(1-T^{4})(1-T^{6})} \quad (r \in \mathbb{Z}_{\ge0}, \; |T|<1 ) .
\end{align*}
In particular, since there is no quasimodular form in which the depth exceeds half the weight, 
$QM_{2k}^{(k)} = QM_{2k}^{(k+1)} = QM_{2k}^{(k+2)} = \dotsb$ holds, so by taking the limit $r \rightarrow \infty$ in the above equation, we have
\begin{align*}
\sum_{k=0}^{\infty} \dim_{\mathbb{C}} QM_{2k}^{(k)}\, T^{2k} = \frac{1}{(1-T^{2})(1-T^{4})(1-T^{6})} .
\end{align*}

We denote by $D$ the differential operator $D = \tfrac{1}{2 \pi i } \tfrac{d}{d \tau} = q \tfrac{d}{d q}$. It is well known that the Eisenstein series $E_{2},E_{4}$ and $E_{6}$ satisfy the following differential relation (equation) by Ramanujan (\cite[Thm.~0.21]{cooper2017ramanujan}. See also \cite[\S 3]{cruz2018manifold} and \cite[\S 4.1]{movasati2012quasimodular} for Halphen's contribution.):
\begin{align}
D(E_{2}) = \frac{E_{2}^{2} - E_{4}}{12}, \quad D(E_{4}) = \frac{E_{2}E_{4} - E_{6}}{3}, \quad D(E_{6}) = \frac{E_{2}E_{6} - E_{4}^{2}}{2} . \label{eq:DEisen}
\end{align}
Therefore, the ring $QM_{*}(\Gamma)$ is closed under the derivation $D$.

For $f \in \mathbb{C}[\![q]\!]$, we denote $\nu (f) = \mathrm{ord}_{q=0}(f) \in \mathbb{Z}_{\ge0} \cup \{ \infty \}$.
The notion of extremal quasimodular forms was introduced and studied by Kaneko and Koike in \cite{kaneko2006extremal}. They defined it as follows using the vanishing order $\nu(f)$ of the quasimodular form $f$. 
\begin{definition}\label{def:exqmf}
Let $f = \sum_{n=0}^{\infty} a_{n} q^{n} \in QM_{w}^{(r)} \backslash QM_{w}^{(r-1)}$ and $m= \dim_{\mathbb{C}} QM_{w}^{(r)}$. We call $f$ extremal if $\nu(f)=m-1$.
If, moreover, $a_{m-1}=1$, $f$ is said to be normalized. We denote by $G_{w}^{(r)}$ the normalized extremal quasimodular form of weight $w$ and depth $r$ on $\Gamma$ (if exists).
\end{definition}
We set
\begin{align*}
\nu_{\max}(r,w) = \max \{ n\in \mathbb{Z}_{\ge0} \text{ such that there exists } f \in QM_{w}^{(r)} \backslash \{0\} \text{ with } \nu(f) = n \} .
\end{align*}
Pellarin gave an upper bound on $\nu_{\max}(r,w)$ in \cite[Thm.~2.3]{pellarin2020extremal}, which turns out to be $\nu_{\max}(r,w)=\dim_{\mathbb{C}} QM_{w}^{(r)}-1$ for the depth $r \in \{1,2,3,4\}$.
In a recent paper \cite{grabner2020quasimodular} Grabner constructed inductively the normalized extremal quasimodular forms $G_{w}^{(r)}$ of weight $w$ and depth $r \in \{1,2,3,4\}$. More precisely, he constructed the quasimodular form $f_{w}^{(r)}$ such that
\begin{align*}
f_{w}^{(r)} = q^{m-1}(1+O(q)) \in QM_{w}^{(r)},  \quad m= \dim QM_{w}^{(r)}
\end{align*}
inductively by using the Serre derivative.
From Definition \ref{def:exqmf} of the extremal quasimodular forms, it is necessary to confirm that $f_{w}^{(r)} \in QM_{w}^{(r)} \backslash QM_{w}^{(r-1)}$, which follows from $\nu_{\max}(r,w) -\nu_{\max}(r-1,w)>0$, and hence $f_{w}^{(r)}=G_{w}^{(r)}$. 
Moreover, these extremal quasimodular forms exist uniquely for $r \in \{1,2,3,4\}$. 
This is because if the two forms $f,g = q^{m-1} +O(q^{m}) \in QM_{w}^{(r)}$ satisfy $f-g \not= 0$, then the vanishing order of $f-g \in QM_{w}^{(r)}$ is given by $\nu (f - g) \ge m > \nu_{\max}(r,w)$, which contradicts the definition of $\nu_{\max}(r,w)$.
In contrast to these cases of depth $r \le 4$, the existence and the uniqueness of $G_{w}^{(r)}$ for the depth $r\ge5$ is still open. 

Let $\mathcal{E}_{r}$ be the set of weights $w$ such that the normalized extremal quasimodular form of weight $w$ and depth $r$ on $\Gamma$ has integral Fourier coefficients. 
In \cite{kaminaka2021extremal}, based on Grabner's results, Kaminaka and Kato completely determined the sets $\mathcal{E}_{2}=\{4,8\}, \mathcal{E}_{3}=\{6\}, \mathcal{E}_{4}=\emptyset$, and showed that the set $\mathcal{E}_{1}$ is a subset of a finite set of cardinality 22. 
Our main theorem asserts that their finite set coincides with $\mathcal{E}_{1}$.
\begin{theorem} \label{thm:mainthm}
We have
\begin{align}
\mathcal{E}_{1} = \{2,6,8,10,12,14,16,18,20,22,24,28,30,32,34,38,54,58,68,80,114,118 \}.  \label{eq:setE1}
\end{align}
\end{theorem}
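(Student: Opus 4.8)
The plan is to prove the two inclusions in \eqref{eq:setE1} separately. The inclusion of $\mathcal{E}_1$ in the displayed $22$-element set is exactly the theorem of Kaminaka and Kato \cite{kaminaka2021extremal} recalled above, so what remains is the reverse inclusion: for each of these $22$ weights $w$, the normalized extremal quasimodular form $G_w^{(1)}$ has integral Fourier coefficients. I would in fact aim for the sharper statement announced in the abstract, namely that for \emph{every} even weight $w$ the form $\lambda_w G_w^{(1)}$ lies in $\mathbb{Z}[\![q]\!]$ and is primitive there, for an explicit weight-dependent constant $\lambda_w \in \mathbb{Q}_{>0}$; granting this, $w \in \mathcal{E}_1$ if and only if $\lambda_w = 1$, and reading off the closed form of $\lambda_w$ produces precisely the list in \eqref{eq:setE1}.

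The first step is to change the local parameter at infinity from $q = e^{2\pi i \tau}$ to a scalar multiple of $1/j$. Because $G_w^{(1)}$, together with an explicit companion, spans the solution space of the second-order modular linear differential equation attached to depth-$1$ weight-$w$ extremal forms, dividing $G_w^{(1)}$ by a suitable monomial in $E_4$, $E_6$ and the discriminant $\Delta$ — chosen to cancel its leading $q$-behaviour and its automorphy factor — yields a power series in $1/j$ solving a Gauss hypergeometric equation whose parameters depend explicitly on $w$; this is the hypergeometric closed form for $G_w^{(1)}$. The point to be checked with care is that the substitution $q \leftrightarrow 1/j$ does not affect integrality: since $j \in q^{-1} + \mathbb{Z}[\![q]\!]$, so that $1/j \in q\,\mathbb{Z}[\![q]\!]$ with a unit leading coefficient, a series belongs to $\mathbb{Z}[\![q]\!]$ if and only if its re-expansion in powers of $1/j$ belongs to $\mathbb{Z}[\![1/j]\!]$; together with the integrality of the $q$-expansion of the modular prefactor and of its inverse, this reduces the whole problem to the $p$-integrality, for each prime $p$, of the coefficients of an explicit ${}_2F_1$-type series.

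The second step is to rewrite this hypergeometric series in a normalized shape adapted to the Atkin(-like) polynomials, so that its general coefficient becomes a ratio of Pochhammer symbols (possibly divided by a value of an Atkin polynomial) whose $p$-adic valuation can be tracked term by term; the Atkin orthogonal polynomials enter because the relevant ${}_2F_1$ is, after this normalization, essentially their generating function, and their reductions modulo $p$ are controlled by supersingular polynomials. For each prime $p$ that can produce a denominator — there are only finitely many such, bounded explicitly in $w$, hence finitely many altogether over the $22$ weights — I would then invoke the lemmas of Dwork and of Zudilin on $p$-adic hypergeometric series, which convert the desired $p$-integrality into a finite list of congruences; concretely these state that a certain explicit polynomial $P_{w,p}(x)$, built from the Atkin polynomial data, vanishes modulo a prescribed power of $p$.

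The technical heart of the proof, and the step I expect to be the main obstacle, is this last congruence: showing that each $P_{w,p}$ does vanish modulo the required prime power. I anticipate needing a hands-on study of its coefficients — via their expression as binomial sums, via the Atkin recursion, or via supersingular reduction — treating the small primes such as $2,3,5,7,11,13$ separately and checking individually the sporadic large weights $54,58,68,80,114,118$, where the congruence is tightest. Since it is exactly the failure of the analogous congruence that removes every weight not on the list, the argument must be sharp enough both to certify the vanishing for the $22$ good weights and to remain consistent with its breakdown for all other weights.
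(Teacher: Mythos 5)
Your proposal follows essentially the same route as the paper: the inclusion of $\mathcal{E}_1$ in the displayed $22$-element set is quoted from Kaminaka--Kato, and the reverse inclusion is proved weight by weight by expressing $G_w^{(1)}$ hypergeometrically in the parameter $1/j$ (after dividing by a power of $E_4$), checking that integrality is unaffected by the change of parameter and by the prefactor, rewriting the resulting series against the Atkin-like polynomials via a Hermite--Pad\'e-type identity in the series $U(t)$ and $V(t)$, and invoking the Dwork and Zudilin lemmas on $p$-adic hypergeometric series to reduce $p$-integrality to the vanishing of an explicit polynomial modulo prescribed prime powers (in the paper: modulo $2^8$, $3^5$, $5^2$, $7^2$, and modulo single primes up to the weight), which is then verified by a finite computation for each of the $22$ weights. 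This is exactly the paper's strategy, and your outline of it is sound.

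One caution about your opening framing: the ``sharper'' statement you say you would aim for --- an explicit $\lambda_w$ for every even $w$ with $\lambda_w G_w^{(1)}$ integral \emph{and primitive}, so that $\mathcal{E}_1$ could be read off as $\{w : \lambda_w=1\}$ --- is more than this machinery delivers. The weight-dependent constants that come out of the hypergeometric normalization (the factors of type $N_{m,a}/(24m)$ in the paper's Theorem 3) are far from minimal: for instance $5\,G_{26}^{(1)}$ already has integral coefficients although the corresponding normalizing factor is enormous, and determining the minimal constant $c_w^{(1)}$ is posed in the paper as an open question. So primitivity should not be expected from this argument, and the exclusion of all weights outside the list must rest on Kaminaka--Kato (as your first paragraph correctly does) rather than on sharpness of $\lambda_w$.
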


Note that there is no quasimodular form of weight $4$ and depth $1$, since there is no modular form of weight~2 on $\Gamma$. 
The minimum weight for the normalized extremal quasimodular forms with non-integral Fourier coefficients is 26, and the Fourier coefficients belong to $\frac{1}{5} \mathbb{Z}$:
\begin{align*}
G_{26}^{(1)} &= \frac{E_{2}(E_{4}^{6} -1640 E_{4}^{3}\Delta +269280 \Delta^{2}) - E_{4}^{2}E_{6}(E_{4}^{3}-920\Delta)}{69837768000} \\
&= q^{4} + \frac{1176}{5} q^{5} +18816 q^{6} + \cdots +316607232 q^{9} + \frac{19845981568}{5} q^{10} + O(q^{11}).
\end{align*} 
Here are some examples of the normalized extremal quasimodular forms of lower weight and depth~1. Obviously, the following forms have integral Fourier coefficients:
\begin{align*}
G_{2}^{(1)} &= E_{2} = D(\log \Delta) = 1 -24 \sum_{n=1}^{\infty} \sigma_{1}(n) q^{n}, \\
G_{6}^{(1)} &=  \frac{E_{2}E_{4} - E_{6}}{720} = \frac{D(E_{4})}{240} = \sum_{n=1}^{\infty} n \sigma_{3}(n) q^{n} , \\
G_{8}^{(1)} &= \frac{E_{4}^{2} - E_{2}E_{6}}{1008} = - \frac{D(E_{6})}{504}  = \sum_{n=1}^{\infty} n \sigma_{5}(n) q^{n} , \\ G_{10}^{(1)} &= E_{4}G_{6}^{(1)} = \frac{D(E_{8})}{480}  = \sum_{n=1}^{\infty} n \sigma_{7}(n) q^{n} ,
\end{align*}
where $\Delta$ is the ``discriminant'' cusp form of weight~12 on $\Gamma$:
\begin{align*}
\Delta (\tau ) = q \prod_{n=1}^{\infty} (1-q^{n})^{24} = \frac{E_{4}(\tau)^{3} - E_{6}(\tau)^{2}}{1728} \in S_{12}(\Gamma) .
\end{align*}
Using this infinite product expression of $\Delta(\tau)$ or the differential relation \eqref{eq:DEisen}, we see that $D(\log \Delta) = D(\Delta)/\Delta = E_{2}$. 
The first few non-trivial examples of $G_{w}^{(1)} \in \mathbb{Z}[\![q]\!]$ are given below.
\begin{align*}
G_{12}^{(1)} &= \frac{E_{4}^{3} - 1008 \Delta - E_{2}E_{4}E_{6}}{332640} = \frac{1}{2 \cdot 3 \cdot 5^{2} \cdot 7} \sum_{n=2}^{\infty} ( n\sigma_{9}(n) - \tau(n) ) q^{n}  \\
&= q^{2}+56 q^{3}+1002 q^{4}+9296 q^{5}+57708 q^{6}+269040 q^{7} +O(q^{8}),  \\
G_{14}^{(1)} &= \frac{E_{2} (E_{4}^{3} - 720 \Delta)  - E_{4}^{2}E_{6}}{393120} = \frac{1}{2 \cdot 3 \cdot 691} \sum_{n=2}^{\infty} n(\sigma_{11}(n) - \tau(n) ) q^{n} \\
&=  q^{2}+128 q^{3}+4050 q^{4}+58880 q^{5}+525300 q^{6}+3338496 q^{7} +O(q^{8}) .  
\end{align*}
In \cite[Rem.~1.2]{kaminaka2021extremal}, Grabner pointed out to Kato that the integrality of these Fourier coefficients can be proved by using a classical congruence formula between Ramanujan's tau-function $\tau(n)$, defined by $\Delta = \sum_{n=1}^{\infty} \tau(n) q^{n}$, and $\sigma_{k}(n)$ (see also Section~\ref{sec:moreint} for the integrality of $G_{14}^{(1)}$).

Since $G_{16}^{(1)} = E_{4} G_{12}^{(1)}$ holds (see Proposition \ref{prop:Gto2F1}), we also see that $G_{16}^{(1)} \in \mathbb{Z}[\![q]\!]$. 
However, it is probably difficult to express the Fourier coefficients of, say, $G_{118}^{(1)}$ in terms of known number-theoretic functions and to show congruence formulas that they satisfy. 
The key idea that avoids this difficulty is to use the reciprocal of the elliptic modular function $j(\tau)^{-1}=\Delta(\tau )/E_{4}(\tau )^{3} = q -744q^{2} +356652 q^{3} - \dotsb$ instead of $q=e^{2 \pi i \tau}$ as the local parameter at infinity (or equivalently, at the cusp of $\Gamma$). 
As will be shown later, Lemma~\ref{lem:integrality} guarantees that the integrality of the coefficients is equivalent when expanded for each local parameter. Furthermore, this idea also provides a unified method of proof for all weights by using generalized hypergeometric series.

Observing the examples of $G_{w}^{(1)}$ above, we find that its Fourier coefficients are positive, except for $G_{2}^{(1)} = E_{2}$. More generally, Kaneko and Koike conjectured in  \cite{kaneko2006extremal} that the Fourier coefficients of $G_{w}^{(r)}$ would be positive if $w\ge4$ and $1 \le r \le 4$. 
For this positivity conjecture, Grabner showed in \cite{grabner2021asymptotic} that the conjecture is true if $w \le 200$ and $1 \le r \le 4$, by proving the asymptotic formula for the Fourier coefficients of $G_{w}^{(r)}$.
Thus, from Theorem \ref{thm:mainthm}, we can conclude that the Fourier coefficients of $G_{w}^{(1)}$ are positive integers if and only if their weight $w$ belongs to $\mathcal{E}_{1}\backslash \{2\}$.

The paper is organized as follows. 
Sections \ref{sec:Gasfpsj} to \ref{sec:prfmainthm} are related to the main theorem. In Section \ref{sec:Gasfpsj} we derive that the normalized extremal quasimodular forms of depth~1 have a hypergeometric expression, and rewrite them using the Atkin-like polynomials $A_{m,a}(X)$ and their adjoint polynomials $B_{m,a}(X)$. 
Then the main theorem can be reduced to the fact that a formal power series vanishes modulo a prime power, since the polynomial part arising from the ``factorization" of a formal power series vanishes. 
To ``factorize" the formal power series, we use some results on $p$-adic hypergeometric series by Dwork and Zudilin. In Section \ref{sec:congUV}, we specialize their results and prove several propositions.
In Section \ref{sec:prfmainthm} we prove the main theorem of this paper by combining the results of the previous sections.

In Section \ref{sec:moreint} we explicitly construct extremal quasimodular forms of depth 1 with integral Fourier coefficients by focusing on their hypergeometric expression.

In Section \ref{sec:furtherdirections} we summarize some previous work on the normalized extremal quasimodular forms of depth 2 to 4 and present some supplementary results, in particular from a hypergeometric point of view. We also present some previous work in which the group $\Gamma$ is replaced by a low-level congruence subgroup or Fricke group for (extremal) quasimodular forms of depth~1.

In Appendices we give: (i) A table of the integral Fourier coefficients of the normalized extremal quasimodular forms of depth~1. (ii) Explicit formulas for the coefficients of a certain formal power series that appears in the proof of the main theorem. 


\section{$G_{w}^{(1)}$ as a formal power series of $j^{-1}$} \label{sec:Gasfpsj}
Let $p,q$ be non-negative integers and $a_{i},b_{j} \in \mathbb{C}$ with $b_{j}\not\in \mathbb{Z}_{\le0}$. The generalized hypergeometric series ${}_{p}F_{q}$ is defined by
\begin{align*}
{}_{p}F_{q} \left( a_{1}, \dots , a_{p} ; b_{1}, \dots , b_{q} ; z \right) = \sum_{n=0}^{\infty} \frac{(a_{1})_{n} \dotsm (a_{p})_{n}}{(b_{1})_{n} \dotsm (b_{q})_{n}} \frac{z^{n}}{n!}, 
\end{align*}
where $(a)_{0}=1, (a)_{n}= a(a+1) \cdots (a +n-1)\;(n\ge1)$ denotes the Pochhammer symbol. This series is clearly invariant under the interchange of each of the parameters $a_{i}$, and the same is true for $b_{j}$. 
When $q=p-1$, the series $F={}_{p}F_{p-1}\left( a_{1}, \dots , a_{p} ; b_{1}, \dots , b_{p-1} ; z \right)$ satisfies the differential equation
\begin{align*}
z^{p-1} (1 - z) \frac{d^{p}F}{d z^{p}} + \sum_{n=1}^{p-1} z^{n-1}(\alpha_{n}z + \beta_{n}) \frac{d^{n}F}{d z^{n}} +\alpha_{0}\,F =0,
\end{align*}
where $\alpha_{n}$ and $\beta_{n}$ are some constants that depend on the parameters $a_{i}$ and $b_{j}$. Using the Euler operator $\Theta = z \tfrac{d}{d z}$, the above differential equation can be rewritten as follows:
\begin{align*}
\{ \Theta (\Theta +b_{1} -1) \dotsm (\Theta +b_{p-1} -1) - z (\Theta + a_{1}) \dotsm (\Theta + a_{p}) \} F =0. 
\end{align*}
Here we collect some hypergeometric series identities that we will use in later discussions. 
\begin{align}
&{}_{2}F_{1} \left( \alpha, \beta ; \gamma ; z \right) = (1-z)^{\gamma-\alpha-\beta} {}_{2}F_{1} \left( \gamma -\alpha, \gamma -\beta ; \gamma ; z \right), \label{eq:Euler} \\
&{}_{2}F_{1} \left( \alpha +1, \beta ; \gamma ; z \right) = \left( 1 + \tfrac{1}{\alpha} \, z\tfrac{d}{d z} \right) {}_{2}F_{1} \left( \alpha , \beta ; \gamma ; z \right), \label{eq:dHyp} \\
\begin{split}
&{}_{2}F_{1} \left( \alpha, \beta ; \alpha+\beta+\tfrac{1}{2} ; z \right)^{2} \\
&= {}_{3}F_{2} \left( 2\alpha, \alpha+\beta , 2\beta ; 2\alpha+2\beta ,\alpha+\beta+\tfrac{1}{2} ; z \right)
\end{split} \quad \text{(Clausen's formula)}, \label{eq:Clausen} \\
\begin{split}
&{}_{2}F_{1} \left( \alpha, \beta ; \alpha +\beta -\tfrac{1}{2} ; z \right) {}_{2}F_{1} \left( \alpha, \beta -1 ; \alpha +\beta -\tfrac{1}{2} ; z \right) \\
&= {}_{3}F_{2} \left( 2\alpha, 2\beta-1, \alpha+\beta-1 ; 2\alpha +2\beta -2, \alpha +\beta -\tfrac{1}{2}  ; z \right) 
\end{split} \quad \text{(Orr's formula)}. \label{eq:Orr}
\end{align}
See \cite[pp.~85-86]{bailey1964generalized} for the proof and historical background of the equations \eqref{eq:Clausen} and \eqref{eq:Orr}.

\begin{proposition}\label{prop:EisensteinHyp}
For sufficiently large $\Im (\tau)$, we have
\begin{align}
E_{4}(\tau ) &= {}_{2}F_{1} \left( \frac{1}{12}, \frac{5}{12} ; 1 ; \frac{1728}{j(\tau)}   \right)^{4} , \quad E_{4}(\tau)^{1/2} = {}_{3}F_{2} \left( \frac{1}{6}, \frac{1}{2}, \frac{5}{6} ; 1, 1 ; \frac{1728}{j(\tau)}   \right), \label{eq:E4Hyp} \\
E_{6}(\tau ) &= \left( 1 - \frac{1728}{j(\tau)} \right)^{1/2} {}_{2}F_{1} \left( \frac{1}{12}, \frac{5}{12} ; 1 ; \frac{1728}{j(\tau)}   \right)^{6} , \label{eq:E6Hyp} \\ 
E_{2}(\tau)  &=  {}_{2}F_{1} \left( \frac{1}{12}, \frac{5}{12} ; 1 ; \frac{1728}{j(\tau)}   \right) {}_{2}F_{1} \left( -\frac{1}{12}, \frac{7}{12} ; 1 ; \frac{1728}{j(\tau)}   \right) \label{eq:E2Hyp2F1} \\
&=\left( 1 - \frac{1728}{j(\tau)} \right)^{1/2}  {}_{3}F_{2} \left( \frac{1}{2}, \frac{5}{6}, \frac{7}{6} ; 1, 1 ; \frac{1728}{j(\tau)}   \right) . \label{eq:E2Hyp3F2}
\end{align}
\end{proposition}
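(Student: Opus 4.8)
The plan is to reduce the whole proposition to the single classical identity $E_4(\tau) = {}_2F_1\!\left(\tfrac{1}{12},\tfrac{5}{12};1;t\right)^4$ with $t = t(\tau) := 1728/j(\tau) = 1728\,\Delta/E_4^3$, together with the elementary identity $1728\,\Delta = E_4^3 - E_6^2$ and the hypergeometric transformations \eqref{eq:Euler}, \eqref{eq:dHyp}, \eqref{eq:Clausen}, \eqref{eq:Orr}. First I would record the setup: since $j^{-1} = q + O(q^2)$ is a power series in $q$ with nonzero linear coefficient, $q$ is a power series in $t$ near $t = 0$, hence each $E_k$ is holomorphic in $t$ in a neighbourhood of $t = 0$, and $t \to 0$ as $\Im(\tau) \to \infty$, with $|t| < 1$ there (so that all the hypergeometric series below converge) and with $E_2, E_4, E_6, 1-t$ all close to $1$ once $\Im(\tau)$ is large; every fractional power occurring below is taken to be the principal branch, which is $\equiv 1$ at the cusp. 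I would also note the two logarithmic-derivative identities $D(\log \Delta) = E_2$ (recalled in the text) and $D(\log t) = D(\log \Delta) - 3\,D(\log E_4) = E_6/E_4$, the latter being immediate from Ramanujan's relations \eqref{eq:DEisen}; consequently $1 - t = E_6^2/E_4^3$.

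The core step is to show that $\varphi := E_4^{1/4}$, regarded as a function of $t$, satisfies the hypergeometric differential equation $t(1-t)\varphi'' + \left(1 - \tfrac{3}{2} t\right)\varphi' - \tfrac{5}{144}\varphi = 0$ attached to ${}_2F_1\!\left(\tfrac{1}{12},\tfrac{5}{12};1;t\right)$. To do this I would use $Dt = t\,E_6/E_4 = t\,\varphi^2(1-t)^{1/2}$, compute $D\varphi$ and $DE_2$ from \eqref{eq:DEisen}, convert $D$-derivatives into $d/dt$-derivatives by dividing by $Dt$, and eliminate $E_2$ and $E_6$; this is a direct (if slightly lengthy) computation that collapses exactly to the displayed ODE. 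Since the indicial exponents of this equation at $t = 0$ are $0,0$, its unique solution holomorphic at $t = 0$ with value $1$ is ${}_2F_1\!\left(\tfrac{1}{12},\tfrac{5}{12};1;t\right)$; as $\varphi$ is holomorphic near $t = 0$ with $\varphi = 1$ at the cusp, we conclude $\varphi = {}_2F_1\!\left(\tfrac{1}{12},\tfrac{5}{12};1;t\right)$, which is the first identity in \eqref{eq:E4Hyp}. (This identity is classical — going back to Fricke and Klein, and appearing in the Kaneko--Zagier theory of the supersingular polynomial — so one could alternatively invoke it and bypass the ODE computation.)

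The remaining identities are then formal consequences. From $1 - t = E_6^2/E_4^3$ we get $E_6 = E_4^{3/2}(1-t)^{1/2} = (1-t)^{1/2}\varphi^6$, which is \eqref{eq:E6Hyp}. For $E_2$, writing $\Delta = \varphi^{12} t/1728$ and applying $D(\log\,\cdot\,)$ gives $E_2 = Dt\,\bigl(12\varphi'/\varphi + 1/t\bigr) = (1-t)^{1/2}\varphi\,(\varphi + 12\,t\varphi')$; by \eqref{eq:dHyp} with $\alpha = \tfrac{1}{12}$ one has $\varphi + 12\,t\varphi' = {}_2F_1\!\left(\tfrac{13}{12},\tfrac{5}{12};1;t\right)$, and by Euler's transformation \eqref{eq:Euler} one has $(1-t)^{1/2}\,{}_2F_1\!\left(\tfrac{13}{12},\tfrac{5}{12};1;t\right) = {}_2F_1\!\left(-\tfrac{1}{12},\tfrac{7}{12};1;t\right)$, which yields \eqref{eq:E2Hyp2F1}. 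Finally, Clausen's formula \eqref{eq:Clausen} with $\alpha = \tfrac{1}{12}, \beta = \tfrac{5}{12}$ applied to $E_4^{1/2} = \varphi^2$ gives the second identity in \eqref{eq:E4Hyp}, and Orr's formula \eqref{eq:Orr} with $\alpha = \tfrac{5}{12}, \beta = \tfrac{13}{12}$ applied to $E_2 = (1-t)^{1/2}\,{}_2F_1\!\left(\tfrac{1}{12},\tfrac{5}{12};1;t\right){}_2F_1\!\left(\tfrac{13}{12},\tfrac{5}{12};1;t\right)$ gives \eqref{eq:E2Hyp3F2}.

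The main obstacle is the core step: verifying that the change of variables really does turn Ramanujan's system into the hypergeometric ODE. This is mechanical but must be carried out carefully, in particular keeping track of the branch of $(1-t)^{1/2}$; one also needs the routine but essential remarks that $q$ is an analytic function of $t$ near the cusp, that the relevant fractional powers are unambiguous there, and that the second solution of the ODE at $t = 0$ is logarithmic, so that the holomorphic normalization pins down $\varphi$ uniquely. Everything after that is bookkeeping with the identities \eqref{eq:Euler}--\eqref{eq:Orr} already quoted in the text.
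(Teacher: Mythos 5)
Your proposal is correct and follows essentially the same route as the paper: you obtain \eqref{eq:E2Hyp2F1} from the logarithmic derivative of $\Delta = t\,E_{4}^{3}/1728$ together with \eqref{eq:dHyp} and \eqref{eq:Euler}, and then Clausen's and Orr's formulas give the ${}_{3}F_{2}$ expressions exactly as in the text (your Orr parameters $(\alpha,\beta)=(\tfrac{5}{12},\tfrac{13}{12})$ versus the paper's $(\tfrac{7}{12},\tfrac{11}{12})$ produce the same ${}_{3}F_{2}(\tfrac{1}{2},\tfrac{5}{6},\tfrac{7}{6};1,1;\cdot)$). The only deviation is that you sketch an ODE verification of the classical identity $E_{4}^{1/4}={}_{2}F_{1}\left(\tfrac{1}{12},\tfrac{5}{12};1;\tfrac{1728}{j}\right)$ (and derive \eqref{eq:E6Hyp} from $1-t=E_{6}^{2}/E_{4}^{3}$), whereas the paper simply quotes these classical facts from Stiller --- a step you yourself note can be replaced by the citation, so the difference is immaterial.
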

The hypergeometric expressions of the Eisenstein series $E_{4}$ and $E_{6}$ are classical and  well known, for the proof, see \cite{stiller1988classical}. In particular, $E_{4}^{1/4}$ is one of the solutions of a hypergeometric differential equation at $j=\infty$, and for the solution at $j=0,1728$, we refer to \cite{archinard2003exceptonal}. To obtain the second equation of \eqref{eq:E4Hyp}, we use Clausen's formula \eqref{eq:Clausen}. 
See \cite[\S 5.4]{zagier2008elliptic} for a more general explanation of the fact that a (holomorphic or meromorphic) modular form satisfies a linear differential equation with a modular function as a variable.

In contrast to $E_{4}$ and $E_{6}$, the hypergeometric expression \eqref{eq:E2Hyp2F1} of $E_{2}$ is less well known. To the best of the author's knowledge, the expression can be found in the author's Ph.D. thesis \cite[Ch.~3]{nakaya2018phdthesis} and \cite[Thm.~5]{pellarin2020some}. There is also an equivalent expression in \cite[Ch.~2.6]{movasati2017gauss}, although it looks a little different. 
For the convenience of the reader, we will briefly review the proof in  \cite{nakaya2018phdthesis} here; by calculating the logarithmic derivative of $\Delta = j^{-1} E_{4}^{3} = j^{-1} {}_{2}F_{1} ( \tfrac{1}{12}, \tfrac{5}{12} ; 1 ; \tfrac{1728}{j} )^{12}$, we have
\begin{align*}
E_{2} &= D(\log \Delta) = - \tfrac{D(j)}{j} +12\, {}_{2}F_{1} ( \tfrac{1}{12}, \tfrac{5}{12} ; 1 ; \tfrac{1728}{j} )^{-1}  D \left( {}_{2}F_{1} ( \tfrac{1}{12}, \tfrac{5}{12} ; 1 ; \tfrac{1728}{j}) \right)  \\
&= - \tfrac{D(j)}{j} +12 \, {}_{2}F_{1} ( \tfrac{1}{12}, \tfrac{5}{12} ; 1 ; \tfrac{1728}{j} )^{-1} D\left( \tfrac{1728}{j} \right) \tfrac{d}{dz} {}_{2}F_{1} ( \tfrac{1}{12}, \tfrac{5}{12} ; 1 ; z ) \big|_{z=1728/j} \\
&= - \tfrac{D(j)}{j} \, {}_{2}F_{1} ( \tfrac{1}{12}, \tfrac{5}{12} ; 1 ; \tfrac{1728}{j} )^{-1} \left\{ \left( 1 +12 z \tfrac{d}{dz} \right) {}_{2}F_{1} ( \tfrac{1}{12}, \tfrac{5}{12} ; 1 ; z ) \big|_{z=1728/j} \right\} \\
&= \tfrac{E_{6}}{E_{4}} \, {}_{2}F_{1} ( \tfrac{1}{12}, \tfrac{5}{12} ; 1 ; \tfrac{1728}{j} )^{-1} \, {}_{2}F_{1} ( \tfrac{13}{12}, \tfrac{5}{12} ; 1 ; \tfrac{1728}{j} ) \quad \text{(by \eqref{eq:dHyp})} \\
&= \tfrac{E_{6}}{E_{4}} \, {}_{2}F_{1} ( \tfrac{1}{12}, \tfrac{5}{12} ; 1 ; \tfrac{1728}{j} )^{-1} \, \left( 1-\tfrac{1728}{j} \right)^{-1/2} \, {}_{2}F_{1} ( -\tfrac{1}{12}, \tfrac{7}{12} ; 1 ; \tfrac{1728}{j} ) \quad \text{(by \eqref{eq:Euler})} \\
&= {}_{2}F_{1} ( \tfrac{1}{12}, \tfrac{5}{12} ; 1 ; \tfrac{1728}{j} ) \, {}_{2}F_{1} ( -\tfrac{1}{12}, \tfrac{7}{12} ; 1 ; \tfrac{1728}{j} ).
\end{align*}
In the fourth equality we used the fact that $D(j)=-j E_{6}/E_{4}$, which can be calculated with \eqref{eq:DEisen}.
Equation \eqref{eq:E2Hyp3F2} is obtained by transforming Equation \eqref{eq:E2Hyp2F1} using Orr's formula as $(\alpha,\beta)=(\tfrac{7}{12}, \tfrac{11}{12})$ as follows. 
\begin{align*}
E_{2} &= {}_{2}F_{1} ( \tfrac{1}{12}, \tfrac{5}{12} ; 1 ; \tfrac{1728}{j} ) \, {}_{2}F_{1} ( -\tfrac{1}{12}, \tfrac{7}{12} ; 1 ; \tfrac{1728}{j} ) \\
&= \left( 1-\tfrac{1728}{j} \right)^{1/2} \, {}_{2}F_{1} ( \tfrac{11}{12}, \tfrac{7}{12} ; 1 ; \tfrac{1728}{j} ) \, {}_{2}F_{1} ( -\tfrac{1}{12}, \tfrac{7}{12} ; 1 ; \tfrac{1728}{j} ) \\
&=  \left( 1-\tfrac{1728}{j} \right)^{1/2} \, {}_{3}F_{2} ( \tfrac{7}{6}, \tfrac{5}{6}, \tfrac{1}{2} ; 1,1 ; \tfrac{1728}{j} ).
\end{align*}
Alternatively, the expressions \eqref{eq:E2Hyp2F1} and \eqref{eq:E2Hyp3F2} can be obtained by setting $n=0$ in equations \eqref{eq:Qnt} and \eqref{eq:d2G4n+2Hyp}, since $E_{2}=G_{2}^{(1)}=G_{2}^{(2)}$.

By setting $t=1/j(\tau)$, Proposition \ref{prop:EisensteinHyp} immediately implies the following theorem, which is a generalization of Theorem 5 in \cite{stiller1988classical} for $M_{*}(\Gamma) = \mathbb{C}[E_{4},E_{6}]$.
\begin{theorem} \label{thm:isomQMF}
Put $\mathcal{F}_{1}(t) = {}_{2}F_{1} \left( \frac{1}{12}, \frac{5}{12} ; 1 ; 1728t  \right), \mathcal{F}_{2}(t) = {}_{2}F_{1} \left( -\frac{1}{12}, \frac{7}{12} ; 1 ; 1728t  \right)$, and then we have the ring isomorphism
\begin{align}
QM_{*}(\Gamma) \simeq \mathcal{F} := \mathbb{C}[ \mathcal{F}_{1}(t) \mathcal{F}_{2}(t), \mathcal{F}_{1}(t)^{4}, (1-1728t)^{1/2} \mathcal{F}_{1}(t)^{6}]. \label{eq:isomQMF}
\end{align}
Moreover, since $QM_{*}$ is closed under the derivation $D$, the ring $\mathcal{F}$ is closed under the derivation $D_{\mathcal{F}}:=(1-1728t)^{1/2}\mathcal{F}_{1}(t)^{2} \, t \frac{d}{d t}$. In other words, the isomorphism $(QM_{*}(\Gamma),D) \simeq (\mathcal{F}, D_{\mathcal{F}})$ holds as a graded differential algebra over $\mathbb{C}$.
\end{theorem}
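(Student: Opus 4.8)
The plan is to read the isomorphism off directly from Proposition~\ref{prop:EisensteinHyp}. Setting $t = 1/j(\tau)$, the identities \eqref{eq:E4Hyp}, \eqref{eq:E6Hyp} and \eqref{eq:E2Hyp2F1} say precisely that, for $\Im(\tau)$ large (equivalently, as elements of $\mathbb{C}[\![t]\!]$ once the branch $(1-1728t)^{1/2} = 1+O(t)$ is fixed),
\begin{align*}
E_{4} = \mathcal{F}_{1}(t)^{4}, \qquad E_{6} = (1-1728t)^{1/2}\mathcal{F}_{1}(t)^{6}, \qquad E_{2} = \mathcal{F}_{1}(t)\mathcal{F}_{2}(t).
\end{align*}
Hence the change of local parameter $\tau \leftrightarrow t$ defines a ring homomorphism $\phi\colon QM_{*}(\Gamma)=\mathbb{C}[E_{2},E_{4},E_{6}] \to \mathcal{F}$ sending $E_{2},E_{4},E_{6}$ to the three displayed expressions, and $\phi$ is surjective by the very definition of $\mathcal{F}$.

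First I would prove injectivity. The Eisenstein series $E_{2},E_{4},E_{6}$ are algebraically independent over $\mathbb{C}$ (this is the classical fact that $QM_{*}(\Gamma)$ is a free polynomial algebra on these three generators). Moreover the substitution is invertible: since $1/j(\tau) = q - 744q^{2} + \cdots \in q\,\mathbb{C}[\![q]\!]$, there is a unique reciprocal substitution $q = t + \cdots \in t\,\mathbb{C}[\![t]\!]$, so passing from $q$ to $t$ is an automorphism of the ambient ring of power series (a local biholomorphism at the cusp). Consequently $\phi$ carries the algebraically independent set $\{E_{2},E_{4},E_{6}\}$ to an algebraically independent set, so $\phi$ is injective, hence an isomorphism. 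The same observation lets me grade $\mathcal{F}$ by declaring $\mathcal{F}_{1}\mathcal{F}_{2}$, $\mathcal{F}_{1}^{4}$ and $(1-1728t)^{1/2}\mathcal{F}_{1}^{6}$ to have weights $2$, $4$ and $6$: this is well defined precisely because there are no relations among them, and with this grading $\phi$ becomes an isomorphism of graded $\mathbb{C}$-algebras.

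For the differential statement, the one genuine computation is to identify the image of $D=q\frac{d}{dq}$ under the substitution. Writing $j = E_{4}^{3}/\Delta$ and using Ramanujan's relations \eqref{eq:DEisen} together with $D(\log\Delta)=E_{2}$, one gets $D(\log j) = 3\,D(E_{4})/E_{4} - E_{2} = (E_{2}-E_{6}/E_{4}) - E_{2} = -E_{6}/E_{4}$, hence $D(j) = -j\,E_{6}/E_{4}$ and therefore
\begin{align*}
D(t) = D(1/j) = -\frac{D(j)}{j^{2}} = \frac{E_{6}}{E_{4}}\,t = (1-1728t)^{1/2}\mathcal{F}_{1}(t)^{2}\cdot t,
\end{align*}
where the last equality uses $E_{6}/E_{4} = (1-1728t)^{1/2}\mathcal{F}_{1}(t)^{2}$. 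By the chain rule, for any $g\in\mathcal{F}$ regarded as a function of $t$ we get $D(g) = g'(t)\,D(t) = (1-1728t)^{1/2}\mathcal{F}_{1}(t)^{2}\,t\frac{dg}{dt} = D_{\mathcal{F}}(g)$, i.e.\ $\phi\circ D = D_{\mathcal{F}}\circ\phi$. Since $QM_{*}(\Gamma)$ is closed under $D$ (again by \eqref{eq:DEisen}), transporting along $\phi$ shows $\mathcal{F}$ is closed under $D_{\mathcal{F}}$, and $(QM_{*}(\Gamma),D)\simeq(\mathcal{F},D_{\mathcal{F}})$ as graded differential $\mathbb{C}$-algebras.

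The step I expect to need the most care is the injectivity/algebraic-independence argument: one must be certain that replacing the variable $q$ by $t=1/j$ introduces no spurious algebraic relation among $\mathcal{F}_{1}\mathcal{F}_{2}$, $\mathcal{F}_{1}^{4}$ and $(1-1728t)^{1/2}\mathcal{F}_{1}^{6}$. This is exactly what the invertibility of the formal substitution $q \leftrightarrow 1/j(\tau)$ guarantees; once it is in place, the rest of the proof is bookkeeping resting on Proposition~\ref{prop:EisensteinHyp} and the Ramanujan relations \eqref{eq:DEisen}.
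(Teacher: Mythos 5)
Your proposal is correct and follows essentially the same route as the paper: the paper derives the theorem directly from Proposition~\ref{prop:EisensteinHyp} via the change of local parameter $t=1/j(\tau)$ (citing Stiller's result for $M_{*}(\Gamma)$), and your argument simply fills in the details of that route — the invertibility of the substitution $q\leftrightarrow 1/j$, algebraic independence giving injectivity, and the computation $D(j)=-jE_{6}/E_{4}$ (used in the paper's proof of Proposition~\ref{prop:EisensteinHyp}) to identify $D$ with $D_{\mathcal{F}}$.
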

Based on the ring isomorphism \eqref{eq:isomQMF}, we can consider the problem of the integrality of the Fourier coefficients of normalized extremal quasimodular forms as an equivalence problem in $\mathcal{F}$.

\begin{example}
The formula in $\mathcal{F}$ equivalent to the formula $D(E_{2})=\tfrac{1}{12}(E_{2}^{2}-E_{4})$ in $QM_{*}$ can be calculated as follows. 
\begin{align*}
&{} 12t \frac{d}{d t} \mathcal{F}_{1}(t) \mathcal{F}_{2}(t) =   \left( 12t \frac{d}{d t} \mathcal{F}_{1}(t) \right) \mathcal{F}_{2}(t) + \mathcal{F}_{1}(t) \left( 12t \frac{d}{d t} \mathcal{F}_{2}(t) \right)  \\
&= \mathcal{F}_{2}(t) \left( 1 +12t \frac{d}{d t} \right) \mathcal{F}_{1}(t) - \mathcal{F}_{1}(t) \left( 1 -12t \frac{d}{d t} \right) \mathcal{F}_{2}(t) \\
&= \mathcal{F}_{2}(t)\, {}_{2}F_{1} \left( \tfrac{13}{12}, \tfrac{5}{12} ; 1 ; 1728t  \right) - \mathcal{F}_{1}(t)\, {}_{2}F_{1} \left( \tfrac{11}{12}, \tfrac{7}{12} ; 1 ; 1728t  \right) \quad (\text{by \eqref{eq:dHyp}}) \\
&=  (1-1728t)^{-1/2} \left( \mathcal{F}_{2}(t)^{2} - \mathcal{F}_{1}(t)^{2} \right) \quad (\text{by \eqref{eq:Euler}}).
\end{align*}
Hence we obtain $D_{\mathcal{F}}(\mathcal{F}_{1}(t) \mathcal{F}_{2}(t)) = \tfrac{1}{12}( \mathcal{F}_{1}(t)^{2}\mathcal{F}_{2}(t)^{2} - \mathcal{F}_{1}(t)^{4})$. The equations corresponding to the remaining equations in \eqref{eq:DEisen} can be calculated in a similar way.
\end{example}

We introduce the Serre derivative (or Ramanujan--Serre derivative) $\partial_{k}$ defined by
\begin{align*}
\partial_{k} = D - \frac{k}{12} E_{2} .
\end{align*}
From this definition, it is clear that the Leibniz rule $\partial_{k+l}(f g)=\partial_{k}(f)g+f\partial_{l}(g)$ is satisfied. According to convention, we use the following symbols for the iterated Serre derivative:
\begin{align*}
\partial_{k}^{0}(f) = f, \quad \partial_{k}^{n+1}(f) = \partial_{k+2n} \circ \partial_{k}^{n}(f) \quad (n\ge0).
\end{align*}
It is well known that $\partial_{k-r} : QM_{k}^{(r)} \rightarrow QM_{k+2}^{(r)}$ for even $k$ and $r \in \mathbb{Z}_{\ge0}$, which is a special case of Proposition 3.3 in \cite{kaneko2006extremal}.
By rewriting \eqref{eq:DEisen} using the Serre derivative, we obtain the following useful consequences. 
\begin{align}
\partial_{1}(E_{2}) = -\frac{1}{12} E_{4}, \quad \partial_{4}(E_{4}) = -\frac{1}{3} E_{6}, \quad \partial_{6}(E_{6}) = -\frac{1}{2} E_{4}^{2}. \label{eq:exSerred}
\end{align}
Note that the Serre derivative is not necessarily depth-preserving, as can be seen from the first equation in \eqref{eq:exSerred}. Therefore, it is necessary to confirm the depth of the quasimodular form created by repeatedly applying a differential operator including the Serre derivative as in \eqref{eq:recGw0}.

Now we consider the following differential equation, called the Kaneko--Zagier equation or, in a more general context, a second-order modular linear differential equation: 
\begin{align}
D^{2}(f) - \frac{w}{6} E_{2} D(f) + \frac{w(w-1)}{12} D(E_{2}) f =0. \label{eq:2ndKZeqn}
\end{align}
This differential equation (with $w$ replaced by $k+1$) first appeared in the study of the $j$-invariants of supersingular elliptic curves in \cite{kaneko1998supersingular} and was characterized in \cite[\S 5]{kaneko2003modular}. Although we consider quasimodular form solutions of \eqref{eq:2ndKZeqn} in this paper, it should be emphasized that the Kaneko--Zagier equation \eqref{eq:2ndKZeqn} has modular form solutions of level $1,2,3,4$ (\cite{kaneko2003modular}), and $5$ (\cite{kaneko2006modular}) and mixed mock modular form solutions (\cite{guerzhoy2015mixed}) for appropriate $w$.

It is easy to rewrite the differential equation \eqref{eq:2ndKZeqn} as $L_{w}(f)=0$, where
\begin{align}
L_{w} \coloneqq \partial_{w-1}^{2} -\frac{w^{2}-1}{144} E_{4}  . \label{eq:dopLw}
\end{align}
We also define the differential operators $K_{w}^{\mathrm{up}}$ (\cite{grabner2020quasimodular}) and its ``adjoint'' $\widehat{K_{w}^{\mathrm{up}}}$ as follows:
\begin{align}
K_{w}^{\mathrm{up}} \coloneqq E_{4} \partial_{w-1} - \frac{w+1}{12} E_{6} , \quad \widehat{K_{w}^{\mathrm{up}}} \coloneqq E_{4} \partial_{w+3} - \frac{w+9}{12} E_{6} . \label{eq:dopKup} 
\end{align}
Then we have the following identity for the composition of the differential operators.
\begin{align}
L_{w+6} \circ K_{w}^{\mathrm{up}} = \widehat{K_{w}^{\mathrm{up}}} \circ L_{w} . \label{eq:LKup}
\end{align}
We will not give the proof, since it is a simple calculation by using Ramanujan's identity \eqref{eq:DEisen}. However, this identity is important for investigating the inductive structure of the solutions of the differential equation $L_{w}(f)=0$.
Indeed, if a function $f$ satisfies $L_{w}(f)=0$, we see that $F:=K_{w}^{\mathrm{up}}(f)$ satisfies $L_{w+6}(F) =0$. 
Therefore, the identity \eqref{eq:LKup} gives not only the structure of the quasimodular solution of \eqref{eq:2ndKZeqn}, but also that of a modular solution, a logarithmic solution, and a formal $q$-series solution.

The differential operator \eqref{eq:LKup} can be considered as a third-order differential operator, so that the given second-order differential operators $L_{w+6}$ and $L_{w}$ are in the left and right factors, respectively. 
From this point of view, the factors $K_{w}^{\mathrm{up}}$ and its adjoint are uniquely and independently determined from the Fourier coefficients of the extremal quasimodular forms, up to a constant multiple.

We define the sequence of power series $G_{k}^{*}$ by the following differential recursions:
\begin{align}
&{} G_{0}^{*} = 1,\quad G_{w+6}^{*} = \frac{w+6}{72(w+1)(w+5)} K_{w}^{\mathrm{up}}(G_{w}^{*}), \label{eq:recGw0} \\
&{} G_{w+2}^{*} = \frac{12}{w+1} \partial_{w-1} (G_{w}^{*}) , \quad G_{w+4}^{*} = E_{4} G_{w}^{*} . \label{eq:recGw24}
\end{align}
The proportionality constants appearing in the definition are chosen so that the leading coefficients of $G_{w}^{*}$ and $G_{w+2}^{*}$ are $1$ as follows. Since $L_{0}(G_{0}^{*})=L_{0}(1)=0$, we have $L_{w}(G_{w}^{*})=0$ for any $w\equiv 0 \pmod{6}$ with the aid of \eqref{eq:LKup}. Then the Frobenius ansatz (see \cite[\S 2.3]{grabner2020quasimodular}) gives
\begin{align*}
G_{w}^{*} = q^{w/6} \left\{ 1+ \frac{4w(2w-3)}{w+6} q +O(q^{2}) \right\}.
\end{align*}
By applying the differential operators $\partial_{w-1}$ and $K_{w}^{\mathrm{up}}$ act on this power series, we have
\begin{align*}
\partial_{w-1}(G_{w}^{*}) &= q^{w/6} \left( \frac{w+1}{12} + O(q) \right), \\
K_{w}^{\mathrm{up}}(G_{w}^{*}) &= q^{(w+6)/6} \left\{ \frac{72(w+1)(w+5)}{w+6} + O(q) \right\} .
\end{align*}
Note that $G_{w+4}^{*}$ can also be expressed as $\frac{12}{w-1}\partial_{w+1}(G_{w+2}^{*})$.

From this construction and the property of the Serre derivative $\partial_{w-1} : QM_{w}^{(1)} \rightarrow QM_{w+2}^{(1)}$, it is clear that $G_{k}^{*} = q^{m-1}(1+O(q))$ for $m=\dim_{\mathbb{C}} QM_{k}^{(1)}$ and $G_{k}^{*} \in QM_{k}^{(1)}$ for even integer~$k$. Therefore, from the same discussion in Section \ref{sec:intro}, the power series $G_{k}^{*}$ is nothing but the normalized extremal quasimodular form $G_{k}^{(1)}$, we have $L_{w}(G_{w}^{(1)})=0$ for $w \equiv 0\pmod{6}$.

By changing the variables $z=1728/j(\tau )$, the Kaneko--Zagier equation \eqref{eq:2ndKZeqn}, which is equivalent to $L_{w}(f)=0$, is transformed into
\begin{align*}
z(1-z) \frac{d^{2} g}{d z^{2}} + \left( -\frac{w-6}{6} + \frac{w-9}{6} z \right) \frac{d g}{d z} - \frac{(w-1)(w-5)}{144} g = 0
\end{align*}
or equivalently (Recall that $\Theta = z \tfrac{d}{d z}$.)
\begin{align*}
\left\{ \Theta \left( \Theta - \frac{w-6}{6} -1  \right) - z \left( \Theta - \frac{w-1}{12} \right) \left( \Theta - \frac{w-5}{12} \right) \right\} g =0,
\end{align*}
where $g(\tau ) = E_{4}(\tau )^{-(w-1)/4} f(\tau )$. This differential equation is a hypergeometric differential equation. 
Since we now assume $w \equiv 0 \pmod{6}$,  the solution space of the above differential equation is spanned by the power series solution
\begin{align}
z^{w/6}\, {}_{2}F_{1} \left( \frac{w+1}{12}, \frac{w+5}{12} ; \frac{w}{6} +1 ; z  \right) \label{eq:Hypz}
\end{align}
and a logarithmic solution. Therefore, since the extremal quasimodular forms contain no logarithmic terms, we obtain the hypergeometric expression of $G_{w}^{(1)}$ for $w\equiv 0 \pmod{6}$.

\begin{proposition} \label{prop:Gto2F1}
The normalized extremal quasimodular forms of even weight and depth $1$ on $\Gamma$ have the following hypergeometric expressions.
\begin{align*}
G_{6n}^{(1)}(\tau ) &= j(\tau )^{-n} {}_{2}F_{1} \left( \frac{1}{12}, \frac{5}{12} ; 1 ; \frac{1728}{j(\tau )}  \right)^{2(3n-1)} P_{n} \left( \frac{1}{j(\tau )} \right), \\
G_{6n+2}^{(1)}(\tau ) &= j(\tau )^{-n}  {}_{2}F_{1} \left( \frac{1}{12}, \frac{5}{12} ; 1 ; \frac{1728}{j(\tau )}  \right)^{6n} Q_{n} \left( \frac{1}{j(\tau )} \right), \\
G_{6n+4}^{(1)}(\tau ) &= E_{4}(\tau ) G_{6n}^{(1)}(\tau ),
\end{align*}
where
\begin{align}
P_{n}(t) &:= {}_{2}F_{1} \left( \frac{1}{12}, \frac{5}{12} ; 1 ; 1728 t  \right) \, {}_{2}F_{1} \left( \frac{6n+1}{12}, \frac{6n+5}{12} ; n+1 ; 1728 t  \right), \label{eq:Pnt} \\
Q_{n}(t) &:= {}_{2}F_{1} \left( \frac{1}{12}, \frac{5}{12} ; 1 ; 1728 t  \right) \, {}_{2}F_{1} \left( \frac{6n-1}{12}, \frac{6n+7}{12} ; n+1 ; 1728 t  \right). \label{eq:Qnt}
\end{align}
\end{proposition}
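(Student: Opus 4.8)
The plan is to establish the three families in turn, bootstrapping everything from the weight $6n$ case --- the only one for which the preceding discussion already supplies a second-order modular linear differential equation.

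\emph{Weight $6n$.} We have just seen that $L_{6n}(G_{6n}^{(1)})=0$ and that, after setting $z=1728/j(\tau)$, the function $g=E_{4}^{-(6n-1)/4}G_{6n}^{(1)}$ solves the hypergeometric equation whose solution space is spanned by the power series \eqref{eq:Hypz} together with a logarithmic solution. Since $G_{6n}^{(1)}=q^{n}(1+O(q))$ is an honest $q$-series it carries no logarithm, so $g$ is a scalar multiple of \eqref{eq:Hypz}; I would pin down the scalar by comparing leading Fourier coefficients (using $z=1728q+O(q^{2})$ and $G_{6n}^{(1)}=q^{n}+O(q^{n+1})$), which gives the constant $1728^{-n}$. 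Substituting $E_{4}^{1/4}={}_{2}F_{1}(\tfrac1{12},\tfrac5{12};1;z)$ from Proposition~\ref{prop:EisensteinHyp} and $1728^{-n}z^{n}=j^{-n}$ then gives $G_{6n}^{(1)}=j^{-n}\mathcal F_{1}^{6n-1}\,{}_{2}F_{1}(\tfrac{6n+1}{12},\tfrac{6n+5}{12};n+1;z)$, and splitting off one factor $\mathcal F_{1}={}_{2}F_{1}(\tfrac1{12},\tfrac5{12};1;z)$ turns the remaining product into $P_{n}(1/j)$ and the prefactor into $j^{-n}\mathcal F_{1}^{2(3n-1)}$.

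\emph{Weight $6n+4$.} There is nothing further to do: $G_{6n+4}^{(1)}=E_{4}G_{6n}^{(1)}$ is exactly the recursion \eqref{eq:recGw24} together with the identification $G_{k}^{*}=G_{k}^{(1)}$ already established in the text.

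\emph{Weight $6n+2$.} I would use the other half of \eqref{eq:recGw24}, namely $G_{6n+2}^{(1)}=\tfrac{12}{6n+1}\,\partial_{6n-1}(G_{6n}^{(1)})$, and push the Serre derivative through the weight-$6n$ formula. Writing $G_{6n}^{(1)}=E_{4}^{(6n-1)/4}\psi$ with $\psi=j^{-n}\,{}_{2}F_{1}(\tfrac{6n+1}{12},\tfrac{6n+5}{12};n+1;z)$ a function of $z$ alone (and noting $E_{4}^{(6n-1)/4}$ is a well-defined $q$-series since $E_{4}=1+O(q)$), the Leibniz rule for $\partial$, the identity $\partial_{4a}(E_{4}^{a})=-\tfrac a3 E_{4}^{a-1}E_{6}$ (the analogue of $\partial_{4}(E_{4})=-\tfrac13E_{6}$ in \eqref{eq:exSerred}), and the chain-rule identity $D=\tfrac{E_{6}}{E_{4}}\Theta$ on functions of $z$ (which follows from $D(j)=-jE_{6}/E_{4}$, or from $D_{\mathcal F}$ in Theorem~\ref{thm:isomQMF}) reduce the computation to $\partial_{6n-1}(G_{6n}^{(1)})=E_{6}E_{4}^{(6n-5)/4}\bigl(\Theta\psi-\tfrac{6n-1}{12}\psi\bigr)$, where the parenthesis equals $j^{-n}\bigl(\Theta+\tfrac{6n+1}{12}\bigr){}_{2}F_{1}(\tfrac{6n+1}{12},\tfrac{6n+5}{12};n+1;z)$. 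The contiguity relation \eqref{eq:dHyp} collapses this to $\tfrac{6n+1}{12}\,{}_{2}F_{1}(\tfrac{6n+13}{12},\tfrac{6n+5}{12};n+1;z)$, and Euler's transformation \eqref{eq:Euler} (with $\gamma-\alpha-\beta=-\tfrac12$) rewrites it as $(1-z)^{-1/2}\,{}_{2}F_{1}(\tfrac{6n-1}{12},\tfrac{6n+7}{12};n+1;z)$. Inserting $E_{6}=(1-z)^{1/2}\mathcal F_{1}^{6}$ and $E_{4}=\mathcal F_{1}^{4}$ from Proposition~\ref{prop:EisensteinHyp}, the factors $(1-z)^{\pm1/2}$ cancel and the powers of $\mathcal F_{1}$ collect to $\mathcal F_{1}^{6n+1}=\mathcal F_{1}^{6n}\cdot\mathcal F_{1}$, so that $G_{6n+2}^{(1)}=j^{-n}\mathcal F_{1}^{6n}\,{}_{2}F_{1}(\tfrac1{12},\tfrac5{12};1;z)\,{}_{2}F_{1}(\tfrac{6n-1}{12},\tfrac{6n+7}{12};n+1;z)=j^{-n}\mathcal F_{1}^{6n}Q_{n}(1/j)$.

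The only genuinely delicate step is this last Serre-derivative manipulation: one must handle the fractional power $E_{4}^{(6n-1)/4}$ correctly, keep the hypergeometric parameter shifts straight, and check that the auxiliary $(1-z)^{\pm1/2}$ factors cancel. Everything else is routine bookkeeping together with a leading-coefficient normalization check, the latter already subsumed in the verification $G_{k}^{*}=G_{k}^{(1)}$ made above.
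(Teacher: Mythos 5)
Your proposal is correct and follows essentially the same route as the paper: the weight-$6n$ formula comes from the logarithm-free solution \eqref{eq:Hypz} of the transformed Kaneko--Zagier equation normalized via $1728^{-n}E_{4}^{(6n-1)/4}$, weight $6n+4$ is the recursion $G_{6n+4}^{(1)}=E_{4}G_{6n}^{(1)}$, and weight $6n+2$ is obtained by pushing $\partial_{6n-1}$ through that expression using $D(j)=-jE_{6}/E_{4}$, the contiguity relation \eqref{eq:dHyp}, and Euler's transformation \eqref{eq:Euler}. Your packaging of the derivative computation via $\partial_{4a}(E_{4}^{a})=-\tfrac{a}{3}E_{4}^{a-1}E_{6}$ and $D=\tfrac{E_{6}}{E_{4}}\Theta$ is just a reorganization of the same calculation the paper carries out term by term.
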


\begin{proof}
The hypergeometric expression of $G_{6n}^{(1)}(\tau)$ can be obtained by setting $z=\frac{1728}{j(\tau )}$ in \eqref{eq:Hypz} and multiplying by $1728^{-n} E_{4}^{(6n-1)/4}$. 
Similar expression of $G_{6n+2}^{(1)}(\tau)$ can be obtained by applying the Serre derivative to $G_{6n}^{(1)}(\tau)$ as follows ($w=6n$):
\begin{align*}
&{} \frac{w+1}{12} G_{w+2}^{(1)} = \partial_{w-1}(G_{w}^{(1)})  = D(G_{w}^{(1)}) - \frac{w-1}{12}E_{2}G_{w}^{(1)} \\
&= D(j^{-w/6} E_{4}^{(w-1)/4})\, {}_{2}F_{1} \left( \frac{w+1}{12} , \frac{w+5}{12} ; \frac{w}{6}+1 ; \frac{1728}{j} \right) \\
&{} \quad + j^{-w/6} E_{4}^{(w-1)/4} D\left( \frac{1728}{j} \right) \frac{d}{d z} {}_{2}F_{1} \left( \frac{w+1}{12} , \frac{w+5}{12} ; \frac{w}{6}+1 ; z \right) \Bigg|_{z=1728/j}  \\
&{} \quad - \frac{w-1}{12}E_{2}\, j^{-w/6} E_{4}^{(w-1)/4}\, {}_{2}F_{1} \left( \frac{w+1}{12} , \frac{w+5}{12} ; \frac{w}{6}+1 ; \frac{1728}{j} \right) \\
&= \frac{w+1}{12} j^{-w/6} E_{4}^{(w-5)/4}  E_{6}\; {}_{2}F_{1} \left( \frac{w+1}{12} , \frac{w+5}{12} ; \frac{w}{6}+1 ; \frac{1728}{j} \right) \\
&{} \quad + j^{-w/6} E_{4}^{(w-5)/4} E_{6}\; z \frac{d}{d z} {}_{2}F_{1} \left( \frac{w+1}{12} , \frac{w+5}{12} ; \frac{w}{6}+1 ; z \right) \Bigg|_{z=1728/j} \\
&= \frac{w+1}{12} j^{-w/6} E_{4}^{(w-5)/4} E_{6}\; {}_{2}F_{1} \left( \frac{w+13}{12} , \frac{w+5}{12} ; \frac{w}{6}+1 ; \frac{1728}{j} \right) \quad (\text{by \eqref{eq:dHyp}}) \\
&= \frac{w+1}{12} j^{-w/6} E_{4}^{(w+1)/4}\; {}_{2}F_{1} \left( \frac{w-1}{12} , \frac{w+7}{12} ; \frac{w}{6}+1 ; \frac{1728}{j} \right) \quad (\text{by \eqref{eq:Euler}}).
\end{align*}
\end{proof}

\begin{remark} \label{rem:logarithmic solution}
When $w=0$, the Kaneko--Zagier equation \eqref{eq:2ndKZeqn} becomes the equation $D^{2}(f)=0$, which has 1 and $2 \pi i \tau = \log (q)$ as independent solutions.
In general, for $w=6n\, (n \in \mathbb{Z}_{\ge0})$, the two-dimensional solution space of the differential equation \eqref{eq:2ndKZeqn} has already been discussed in \cite[\S 5]{kaneko2003modular}, and its basis is given by $G_{6n}^{(1)}(\tau)$ and
\begin{align*}
H_{6n}^{(1)}(\tau) :=
\begin{cases}
G_{12m}^{(1)}(\tau) \log (q) - \dfrac{12}{N_{m,0}} E_{4}(\tau)E_{6}(\tau)\Delta(\tau)^{m-1} A_{m,0}(j(\tau)) \\
\quad \text{ if $n=2m \, (m \in \mathbb{Z}_{\ge0})$}, \\
G_{12m+6}^{(1)}(\tau) \log (q) + \dfrac{12}{N_{m,6}} E_{4}(\tau) \Delta(\tau)^{m} A_{m,6}(j(\tau)) \\
\quad \text{ if $n=2m+1\, (m \in \mathbb{Z}_{\ge0})$},
\end{cases}
\end{align*}
where $A_{m,0}(X)$ and $A_{m,6}(X)$ are Atkin-like polynomials that appear later in this section, and the numbers $N_{m,0}$ and $N_{m,6}$ are defined by \eqref{eq:d1nf02} and \eqref{eq:d1nf68}. The solution $H_{6n}^{(1)}$ can also be obtained by the recurrence formula \eqref{eq:recGw0}, where the initial value is replaced by $H_{0}^{(1)}= \log (q)$. 
\end{remark}

It is well-known that the space $M_{k}(\Gamma)$ has a basis $\{ E_{4}^{3m+\delta} E_{6}^{\varepsilon}, E_{4}^{3m+\delta-3} E_{6}^{\varepsilon} \Delta , \dotsc , E_{6}^{\varepsilon} \Delta^{m} \}$ for $k=12m+4\delta +6\varepsilon$ with $m\in \mathbb{Z}_{\ge0}, \delta \in \{0,1,2\}, \varepsilon \in \{0,1\}$. This basis is characterized by the fact that the leading terms of its Fourier expansion are $1,q, \dotsc ,q^{m}$. 
We can construct a basis for $QM_{k}^{(1)}(\Gamma)$ with the same property using the normalized extremal quasimodular forms of depth~$1$.
\begin{proposition} \label{prop:d1basisB}
For any even integer $k\ge2$, a basis $\mathcal{B}_{k}^{(1)}$ of the space $QM_{k}^{(1)}(\Gamma)$ is given by the following set: where 
the notation is that in Theorem \ref{thm:isomQMF} and Proposition \ref{prop:Gto2F1}.
\begin{align*}
\mathcal{B}_{12m}^{(1)} &= \mathcal{F}_{1}(j^{-1})^{12m-2} \left( \left\{ j^{-2\ell } P_{2\ell }(j^{-1}) \right\}_{0 \le \ell \le m} \cup  \left\{ j^{-2\ell -1} Q_{2\ell +1}(j^{-1}) \right\}_{0 \le \ell \le m-1} \right) \\
&= \left\{ E_{4}^{3m}, E_{4}^{3m-2} G_{8}^{(1)}, E_{4}^{3m-3} G_{12}^{(1)},E_{4}^{3m-5} G_{20}^{(1)}, E_{4}^{3m-6} G_{24}^{(1)},  \dotsc ,G_{12m}^{(1)}   \right\} ,  \\
\mathcal{B}_{12m+2}^{(1)} &=  \mathcal{F}_{1}(j^{-1})^{12m} \left( \left\{ j^{-2\ell } Q_{2\ell }(j^{-1}) \right\}_{0 \le \ell \le m} \cup \left\{ j^{-2\ell -1} P_{2\ell +1}(j^{-1}) \right\}_{0 \le \ell \le m-1} \right) \\
&= \left\{ E_{4}^{3m} G_{2}^{(1)}, E_{4}^{3m-2} G_{10}^{(1)}, E_{4}^{3m-3} G_{14}^{(1)},E_{4}^{3m-5} G_{22}^{(1)}, E_{4}^{3m-6} G_{26}^{(1)},  \dotsc ,G_{12m+2}^{(1)}   \right\}, \\
\mathcal{B}_{12m+4}^{(1)} &= E_{4}\, \mathcal{B}_{12m}^{(1)} , \quad \mathcal{B}_{12m+6}^{(1)} = E_{4}\, \mathcal{B}_{12m+2}^{(1)} \cup \left\{ G_{12m+6}^{(1)} \right\} , \\
\mathcal{B}_{12m+8}^{(1)} &= E_{4}^{2}\, \mathcal{B}_{12m}^{(1)} \cup \left\{ G_{12m+8}^{(1)} \right\}, \quad \mathcal{B}_{12m+10}^{(1)} = E_{4}\, \mathcal{B}_{12m+6}^{(1)} .
\end{align*}
\end{proposition}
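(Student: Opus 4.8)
The plan is to verify the claim by induction on $m$, establishing the cardinality count and the leading-term property simultaneously. First I would fix the numerology: for $k = 12m$ we have $\dim_{\mathbb C} QM_{k}^{(1)} = \dim M_{12m} + \dim M_{12m-2} = (m+1) + m = 2m+1$, which matches the announced list (the $m+1$ forms $j^{-2\ell}P_{2\ell}(j^{-1})$ with $0 \le \ell \le m$ together with the $m$ forms $j^{-2\ell-1}Q_{2\ell+1}(j^{-1})$ with $0 \le \ell \le m-1$), and similarly $\dim QM_{12m+2}^{(1)} = \dim M_{12m+2} + \dim M_{12m} = m + (m+1) = 2m+1$, $\dim QM_{12m+4}^{(1)} = \dim QM_{12m}^{(1)}$, $\dim QM_{12m+6}^{(1)} = \dim QM_{12m+2}^{(1)} + 1$, $\dim QM_{12m+8}^{(1)} = \dim QM_{12m}^{(1)} + 1$, $\dim QM_{12m+10}^{(1)} = \dim QM_{12m+6}^{(1)}$. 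So in every case the proposed set $\mathcal B_k^{(1)}$ has exactly $\dim_{\mathbb C} QM_k^{(1)}$ elements, and it suffices to prove that it is a linearly independent subset of $QM_k^{(1)}$; equivalently, that it is a spanning set.

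Next I would identify the $q$-expansion leading terms. By Proposition~\ref{prop:Gto2F1} together with the Frobenius ansatz recalled before that proposition, $G_{6n}^{(1)} = q^{n}(1 + O(q))$ and $G_{6n+2}^{(1)} = q^{n}(1 + O(q))$, and of course $E_4 = 1 + O(q)$, $\Delta = q + O(q^2)$. Hence each member of $\mathcal B_{12m}^{(1)}$, written in the form $E_4^{a} G_{12\ell'}^{(1)}$ or $E_4^{a} G_{12\ell'+8}^{(1)}$ as displayed, has $\nu$-value running precisely through $0, 1, 2, \dots, m$ (from the $P$-part, i.e. the $G_{6\cdot(2\ell)}^{(1)}$ and $G_{6\cdot(2\ell)+4}^{(1)}$ pieces, and the $Q$-part $G_{6(2\ell+1)+2}^{(1)}$ pieces), matching the classical modular-form basis leading terms $1, q, q, q^2, q^2, \dots$. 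Because the $\nu$-values of these $2m+1$ forms are all distinct — more precisely, the set of leading exponents with multiplicity is $\{0,1,1,2,2,\dots,m,m\}\setminus\{\text{one copy of }m\}$ arranged so no two forms in the list share a leading exponent after one accounts for the $E_4$-powers — the forms are $\mathbb C$-linearly independent by the standard triangularity argument (a nontrivial linear relation would force the nonzero combination of lowest $\nu$-value to vanish). The identity $\mathcal F_1(j^{-1})^{12m-2}\big(j^{-2\ell}P_{2\ell}(j^{-1})\big) = E_4^{\,?}\,G_{12\ell}^{(1)}\cdot(\text{power of }E_4)$ that converts the hypergeometric display into the $E_4^a G_{\bullet}^{(1)}$ display is exactly Proposition~\ref{prop:Gto2F1} combined with $E_4 = \mathcal F_1(j^{-1})^4$; I would spell this bookkeeping out once for $\mathcal B_{12m}^{(1)}$ and $\mathcal B_{12m+2}^{(1)}$.

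For the remaining four residue classes I would argue structurally rather than recompute. Multiplication by $E_4$ is an injection $QM_k^{(1)} \hookrightarrow QM_{k+4}^{(1)}$ which is a bijection precisely when $\dim$ is unchanged, so $\mathcal B_{12m+4}^{(1)} = E_4\,\mathcal B_{12m}^{(1)}$ and $\mathcal B_{12m+10}^{(1)} = E_4\,\mathcal B_{12m+6}^{(1)}$ are bases by the inductive cases already done. For $k = 12m+6$, $12m+8$ the dimension jumps by exactly one, and $E_4\,\mathcal B_{12m+2}^{(1)}$ (resp. $E_4^2\,\mathcal B_{12m}^{(1)}$) is a linearly independent set of the right size minus one; adjoining $G_{12m+6}^{(1)} = q^{2m+1}(1+O(q))$ (resp. $G_{12m+8}^{(1)} = q^{2m+1}(1+O(q))$) — whose $\nu$-value $2m+1$ strictly exceeds the $\nu$-value of every element of $E_4\,\mathcal B_{12m+2}^{(1)}$, which is at most $2m$ — produces a linearly independent set of full dimension, hence a basis, and it visibly realizes leading terms $1,q,q,q^2,q^2,\dots,q^m,q^{m+1}$. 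The one point requiring care, and the only genuine obstacle, is the triangularity/leading-exponent claim: I must check that within each displayed list the $\nu$-values of the $E_4^a G_{\bullet}^{(1)}$ entries are pairwise distinct (and that the extremal form $G_{12m+6}^{(1)}$ or $G_{12m+8}^{(1)}$ sits strictly above the others), which reduces to the elementary arithmetic $\nu(G_{6n}^{(1)}) = \nu(G_{6n+4}^{(1)}) = n$, $\nu(G_{6n+2}^{(1)}) = n$, $\nu(E_4) = 0$, already available from Proposition~\ref{prop:Gto2F1} and the Frobenius ansatz; I expect no difficulty beyond careful indexing.
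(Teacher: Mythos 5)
Your overall strategy---check that each displayed set has cardinality $\dim_{\mathbb{C}} QM_{k}^{(1)}$ and then get a basis from a triangular pattern of leading $q$-exponents---is the same as the paper's proof, which sets $f_{0}=E_{4}^{3m}=1+O(q)$, $f_{1}=E_{4}^{3m-2}G_{8}^{(1)}=q+O(q^{2})$, \dots, $f_{2m}=G_{12m}^{(1)}=q^{2m}+O(q^{2m+1})$, deduces spanning from $\nu_{\max}(1,12m)=2m$, and independence by comparing Fourier coefficients. Your dimension counts and the reductions for the residue classes $12m+4$, $12m+6$, $12m+8$, $12m+10$ are correct.

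The problem is that your central triangularity claim is stated incorrectly, and as written the justification would not work. You assert that the $\nu$-values of the members of $\mathcal{B}_{12m}^{(1)}$ ``run precisely through $0,1,\dots,m$'' and that the multiset of leading exponents is $\{0,1,1,2,2,\dots,m,m\}$ minus one copy of $m$, with collisions resolved ``after one accounts for the $E_{4}$-powers.'' Since $E_{4}=1+O(q)$, multiplying by any power of $E_{4}$ leaves the vanishing order unchanged, so if the exponents really occurred with multiplicity two no bookkeeping with $E_{4}$-powers could separate them and the triangularity argument would collapse; you seem to be importing the exponent pattern of a union of bases of $M_{12m}$ and $M_{12m-2}$, which is not what this basis looks like. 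What is actually true---and what your own formulas $\nu(G_{6n}^{(1)})=\nu(G_{6n+2}^{(1)})=\nu(G_{6n+4}^{(1)})=n$ already give---is that the exponents are $0,1,2,\dots,2m$, each occurring exactly once: by Definition \ref{def:exqmf} one has $\nu(G_{w}^{(1)})=\dim_{\mathbb{C}}QM_{w}^{(1)}-1$, hence $\nu\bigl(E_{4}^{3(m-\ell)}G_{12\ell}^{(1)}\bigr)=2\ell$ for $0\le\ell\le m$ and $\nu\bigl(E_{4}^{3(m-\ell)-2}G_{12\ell+8}^{(1)}\bigr)=2\ell+1$ for $0\le\ell\le m-1$, and similarly for $\mathcal{B}_{12m+2}^{(1)}$; for $\mathcal{B}_{12m+6}^{(1)}$ the leading terms are $1,q,q^{2},\dots,q^{2m+1}$, not $1,q,q,q^{2},q^{2},\dots,q^{m},q^{m+1}$ as you write at the end. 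Once the exponent bookkeeping is corrected in this way (the distinctness is automatic, with no case analysis needed), your argument is complete and coincides with the paper's: independence from the strictly increasing leading exponents, and either your cardinality count or the paper's appeal to $\nu_{\max}(1,k)=\dim_{\mathbb{C}}QM_{k}^{(1)}-1$ to get spanning.
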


\begin{proof}
We give a proof only for the case of $QM_{12m}^{(1)}(\Gamma)$, the remaining cases being similar. We put $f_{0}=E_{4}^{3m} = 1+ O(q), f_{1}= E_{4}^{3m-2} G_{8}^{(1)}= q+O(q^{2}), \dotsc , f_{2m}=G_{12m}^{(1)} = q^{2m} +O(q^{2m+1})$. 
For any quasimodular form $f \in QM_{12m}^{(1)}(\Gamma)$, we can determine the coefficient $a_{r}$ such that $F := f - \sum_{r=0}^{2m} a_{r} f_{r} = O(q^{2m+1})  \in QM_{12m}^{(1)}(\Gamma)$. Since $\nu_{\max}(1,12m)=2m$, the form $F$ must be $0$, and hence $QM_{12m}^{(1)}(\Gamma)$ is spanned by $\{ f_{0}, \dotsc , f_{2m} \}$. For the linear independence property, by comparing the Fourier coefficients, we have $\sum_{r=0}^{2m} a_{r} f_{r} = 0 \Rightarrow a_{r} =0\, (0\le r \le 2m)$.
\end{proof}

\begin{example}
Case of weight $18$: The following equation holds.
\begin{align*}
\begin{pmatrix} E_{4}^{3}E_{6} \\ E_{6} \Delta \\ E_{2}E_{4}^{4} \\ E_{2} E_{4} \Delta \end{pmatrix}  =  \mathcal{F}_{1}(j^{-1})^{17} \cdot A \begin{pmatrix} {}_{2}F_{1} ( -\tfrac{1}{12}, \tfrac{7}{12} ; 1 ; \tfrac{1728}{j} ) \\ j^{-1} \, {}_{2}F_{1} ( \tfrac{7}{12}, \tfrac{11}{12} ; 2 ; \tfrac{1728}{j} ) \\ j^{-2} \, {}_{2}F_{1} ( \tfrac{11}{12}, \tfrac{19}{12} ; 3 ; \tfrac{1728}{j} ) \\ j^{-3} \, {}_{2}F_{1} ( \tfrac{19}{12}, \tfrac{23}{12} ; 4 ; \tfrac{1728}{j} ) \end{pmatrix} = A \begin{pmatrix} E_{4}^{4}G_{2}^{(1)} \\ E_{4}^{3}G_{6}^{(1)} \\ E_{4} G_{14}^{(1)} \\ G_{18}^{(1)}  \end{pmatrix},
\end{align*}
where
\begin{align*}
A= \begin{pmatrix} 1 & -720 & 0 & 0 \\ 0 & 1 & -1266 & 269280 \\ 1 & 0 & 0 & 0 \\ 0 & 1 & -546 & 0 \end{pmatrix}.
\end{align*}
\end{example}

\begin{lemma} \label{lem:integrality}
The following claims hold.
\begin{enumerate}
\item $f(t) \in \mathbb{Z}[\![t]\!] \Leftrightarrow (1-1728t)^{-1/2} f(t) \in Z[\![t]\!]$. \label{claim:f12}
\item $j(\tau)^{-1} \in q (1+q\mathbb{Z}[\![q]\!])$. \label{claim:jZq}
\item $f(\tau) \in \mathbb{Z}[\![q]\!] \Leftrightarrow f(\tau) \in \mathbb{Z}[\![j^{-1}]\!]$.  \label{claim:fqj}
\item $G_{6n}^{(1)}(\tau) \in \mathbb{Z}[\![q]\!] \Leftrightarrow G_{6n+4}^{(1)}(\tau) \in \mathbb{Z}[\![q]\!]$. \label{claim:G6nG6n4}
\item $G_{6n}^{(1)}(\tau) \in \mathbb{Z}[\![q]\!] \Leftrightarrow P_{n}(t) \in \mathbb{Z}[\![t]\!]$ and $G_{6n+2}^{(1)}(\tau) \in \mathbb{Z}[\![q]\!] \Leftrightarrow Q_{n}(t) \in \mathbb{Z}[\![t]\!]$. \label{claim:GPQ}
\end{enumerate}
\end{lemma}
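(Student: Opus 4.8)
The plan is to establish the five claims essentially in the order stated, since each rests on elementary manipulations of power series with integer coefficients together with the structural results already proved. For claim \eqref{claim:f12}, I would simply observe that $(1-1728t)^{-1/2} = \sum_{n\ge0} \binom{-1/2}{n}(-1728)^n t^n$, and that $\binom{-1/2}{n}(-1728)^n = \binom{2n}{n}432^n \in \mathbb{Z}$ (using $\binom{-1/2}{n} = \binom{2n}{n}/(-4)^n$ and $1728/4 = 432$); hence $(1-1728t)^{-1/2} \in 1 + t\mathbb{Z}[\![t]\!]$ is a unit in $\mathbb{Z}[\![t]\!]$, so multiplication by it and by its inverse $(1-1728t)^{1/2}$ preserves $\mathbb{Z}[\![t]\!]$, giving the equivalence. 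For claim \eqref{claim:jZq}, recall $j(\tau)^{-1} = \Delta(\tau)/E_4(\tau)^3$; since $\Delta = q\prod(1-q^n)^{24} \in q(1+q\mathbb{Z}[\![q]\!])$ and $E_4 \in 1 + q\mathbb{Z}[\![q]\!]$ is a unit in $\mathbb{Z}[\![q]\!]$, the quotient lies in $q(1+q\mathbb{Z}[\![q]\!])$.

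For claim \eqref{claim:fqj}, I would use claim \eqref{claim:jZq}: writing $u = j^{-1} \in q(1+q\mathbb{Z}[\![q]\!])$, the substitution $q \mapsto u$ is an invertible change of local parameter over $\mathbb{Z}$, i.e. there is a compositional inverse $q = q(u) \in u(1+u\mathbb{Z}[\![u]\!])$ (formal inversion of a power series with leading coefficient a unit stays integral). Thus $f \in \mathbb{Z}[\![q]\!]$ if and only if $f \in \mathbb{Z}[\![j^{-1}]\!]$, by substituting in one direction or the other. Claim \eqref{claim:G6nG6n4} is then immediate from Proposition~\ref{prop:Gto2F1}, which gives $G_{6n+4}^{(1)} = E_4\, G_{6n}^{(1)}$, together with the fact that $E_4 \in 1+q\mathbb{Z}[\![q]\!]$ is a unit in $\mathbb{Z}[\![q]\!]$: multiplication by $E_4$ or by $E_4^{-1}$ preserves integrality of $q$-expansions.

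The substantive claim is \eqref{claim:GPQ}. Here I would combine the hypergeometric expressions of Proposition~\ref{prop:Gto2F1} with claims \eqref{claim:f12}--\eqref{claim:fqj}. From Proposition~\ref{prop:Gto2F1}, $G_{6n}^{(1)}(\tau) = j^{-n}\, {}_2F_1(\tfrac1{12},\tfrac5{12};1;\tfrac{1728}{j})^{2(3n-1)} P_n(j^{-1})$. Now ${}_2F_1(\tfrac1{12},\tfrac5{12};1;\tfrac{1728}{j})$ equals $E_4^{1/4}$ by \eqref{eq:E4Hyp}, so the prefactor ${}_2F_1(\cdots)^{2(3n-1)} = E_4^{(3n-1)/2}$; but more to the point, I would argue directly in the variable $t = j^{-1}$: by Theorem~\ref{thm:isomQMF}, $\mathcal{F}_1(t) = {}_2F_1(\tfrac1{12},\tfrac5{12};1;1728t)$ and one checks $\mathcal{F}_1(t) \in \mathbb{Z}[\![t]\!]$ (its coefficients are $\binom{12n}{n}$-type integers, or—more robustly—$\mathcal{F}_1(t)^4 = E_4(q(t))$ viewed via claim \eqref{claim:jZq} has integer coefficients and $\mathcal{F}_1(t)^4 \in 1 + t\mathbb{Z}[\![t]\!]$ forces $\mathcal{F}_1(t) \in 1 + t\mathbb{Z}[\![t]\!]$, since a power series which is a unit congruent to $1$ has an integral fourth root exactly when its square root does, reducing to extracting square roots of $1$-units over $\mathbb{Z}$). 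Actually the cleanest route: $\mathcal{F}_1(t)$ has $\mathbb{Z}$-coefficients because ${}_2F_1(\tfrac1{12},\tfrac5{12};1;1728 t) = \sum \frac{(1/12)_n(5/12)_n}{(n!)^2}1728^n t^n$ and the Chowla--Selberg / Landau-type integrality $\frac{(1/12)_n(5/12)_n}{(n!)^2}1728^n \in \mathbb{Z}$ is classical (these are the coefficients of $E_4^{1/4}$ as a $j^{-1}$-series). Granting $\mathcal{F}_1(t)$ is a unit in $\mathbb{Z}[\![t]\!]$, and using claim \eqref{claim:f12} to absorb the $(1-1728t)^{1/2}$ factors, the equality $G_{6n}^{(1)}(\tau) = \mathcal{F}_1(t)^{2(3n-1)} t^n P_n(t)$ combined with claim \eqref{claim:fqj} shows: $G_{6n}^{(1)} \in \mathbb{Z}[\![q]\!] \iff \mathcal{F}_1(t)^{2(3n-1)} t^n P_n(t) \in \mathbb{Z}[\![t]\!] \iff t^n P_n(t) \in \mathbb{Z}[\![t]\!] \iff P_n(t) \in \mathbb{Z}[\![t]\!]$ (multiplying/dividing by the unit $\mathcal{F}_1(t)^{2(3n-1)}$, and noting $t^n \cdot (\text{integral unit series starting }1)$ has integer coefficients iff $P_n$ does). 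The identical argument with $\mathcal{F}_1(t)^{6n}$ in place of $\mathcal{F}_1(t)^{2(3n-1)}$ handles $G_{6n+2}^{(1)}$ and $Q_n$.

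The main obstacle is the integrality of $\mathcal{F}_1(t) \in \mathbb{Z}[\![t]\!]$ (equivalently that $E_4^{1/4}$ has an integral $j^{-1}$-expansion), which underlies the whole reduction; once that is in hand, everything else is formal manipulation of unit power series over $\mathbb{Z}$. I expect this to be dispatched either by the classical fact that $j^{-1/3} = \Delta^{1/3}/E_4$ and $E_4^{1/4}$ have integral $q$- and $j^{-1}$-expansions (a statement going back to the theory of the modular function $j$), or, if the paper prefers self-containedness, by the arithmetic of the hypergeometric coefficients $\frac{(1/12)_n(5/12)_n}{(n!)^2}1728^n$, whose integrality follows from Kummer's theorem on $p$-adic valuations of factorials applied to the relevant $12$-adic digit sums. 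I would flag this as the one point that genuinely uses input beyond pure formal-power-series bookkeeping.
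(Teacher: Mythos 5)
Your treatment of claims (1)--(4) is essentially the paper's: the same binomial-coefficient identity for $(1\mp1728t)^{\mp1/2}$ (the paper writes $(1-4x)^{1/2}=(1-4x)(1-4x)^{-1/2}$ rather than invoking unit inversion, but this is cosmetic), the same $j^{-1}=\Delta/E_4^{3}$ argument, and for (3) your compositional-inversion argument is just a repackaging of the paper's triangularity of coefficients ($a_m\in\mathbb{Z}[b_0,\dots,b_m]$ and conversely). The genuine divergence is in claim (5): you reduce everything to the integrality of $\mathcal{F}_1(t)={}_2F_1(\tfrac1{12},\tfrac5{12};1;1728t)$, i.e.\ of the $j^{-1}$-expansion of $E_4^{1/4}$, and you correctly flag this as the nontrivial input. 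The paper never needs it: in Proposition~\ref{prop:Gto2F1} only the \emph{even} powers $\mathcal{F}_1^{2(3n-1)}$ and $\mathcal{F}_1^{6n}$ occur, so it suffices that $E_4^{1/2}=U(j^{-1})$ is a $1$-unit of $\mathbb{Z}[\![j^{-1}]\!]$, which is immediate from Clausen's formula \eqref{eq:Clausen}: $U(t)=\mathcal{F}_1(t)^2={}_3F_2(\tfrac16,\tfrac12,\tfrac56;1,1;1728t)=\sum_r u_r t^r$ with $u_r=\binom{2r}{r}\binom{3r}{r}\binom{6r}{3r}\in\mathbb{Z}$ by \eqref{eq:ur}, \eqref{eq:U(t)}. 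That is what each approach buys: yours requires the (true, but deeper) quarter-power integrality, the paper's only an evidently integral ${}_3F_2$, after which both proofs are the same unit bookkeeping combined with claims (1)--(3).

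One of your proposed justifications of the key input is invalid and should be dropped: from $\mathcal{F}_1(t)^4=E_4\in 1+t\mathbb{Z}[\![t]\!]$ you cannot conclude $\mathcal{F}_1(t)\in\mathbb{Z}[\![t]\!]$ by ``extracting square roots of $1$-units over $\mathbb{Z}$'', since square roots of $1$-units in $\mathbb{Z}[\![t]\!]$ are generally not integral (e.g.\ $(1+t)^{1/2}\notin\mathbb{Z}[\![t]\!]$); integrality of a fourth root is extra arithmetic information, not formal. Your other routes (classical integrality of $\frac{(1/12)_n(5/12)_n}{(n!)^2}1728^n$, or a Kummer/Landau valuation count) are correct but left uncarried; note that the paper itself gives a short self-contained proof of exactly this fact in Remark~\ref{rem:E4fourthroot}, via the quadratic transformation \eqref{eq:quadratictransf}: $E_4^{1/4}={}_2F_1(\tfrac16,\tfrac56;1;z)=\sum_n\binom{3n}{n}\binom{6n}{3n}(z/432)^n$ with $z/432\in j^{-1}\mathbb{Z}[\![j^{-1}]\!]$ (Catalan numbers). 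So either import that argument, or---more economically---replace $\mathcal{F}_1(t)$ by $U(t)$ throughout your step (5), which also removes the (unnecessary) appeal to claim (1) there, since no $(1-1728t)^{\pm1/2}$ factors occur in Proposition~\ref{prop:Gto2F1}.
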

\begin{proof}
\begin{enumerate}
\item It is clear that the following facts imply the claim:
\begin{align*}
&{} (1-4x)^{-1/2} = \sum_{r=0}^{\infty} \binom{2r}{r} x^{r} \in \mathbb{Z}[\![x]\!], \\
&{} (1-4x)^{1/2} \left( =  \sum_{r=0}^{\infty} \frac{1}{1-2r} \binom{2r}{r} x^{r} \right) =  (1-4x) (1-4x)^{-1/2} \in \mathbb{Z}[\![x]\!].
\end{align*}
\item Since $E_{4} \in 1+ q\mathbb{Z}[\![q]\!]$, $1/E_{4}^{3} \in 1+ q\mathbb{Z}[\![q]\!]$ holds, and we have $j^{-1} = \Delta / E_{4}^{3} \in q (1+q\mathbb{Z}[\![q]\!])$.
\item Using claim \eqref{claim:jZq}, we know that the coefficients satisfy $a_{m} \in \mathbb{Z}[ b_{0}, b_{1}, \dots , b_{m}]$ and $b_{m} \in \mathbb{Z}[ a_{0}, a_{1}, \dots , a_{m}]$ when the function $f$ is expressed in two ways as $f = a_{0} + a_{1} q +a_{2} q^{2} + \cdots = b_{0} + b_{1} j^{-1} + b_{2} j^{-2} + \cdots $. This gives us the assertion. 
\item  It is clear from the facts that $E_{4}, 1/E_{4} \in 1+ q\mathbb{Z}[\![q]\!]$ and $G_{6n+4}^{(1)} = E_{4} G_{6n}^{(1)}$.
\item By combining \eqref{eq:E4Hyp}, \eqref{eq:ur} and \eqref{eq:U(t)}, we have $E_{4}^{1/2} \in 1+j^{-1} \mathbb{Z}[\![j^{-1}]\!]$ and hence $E_{4}^{1/2} \in 1+q\mathbb{Z}[\![q]\!]$ and $E_{4}^{-1/2} \in 1+q\mathbb{Z}[\![q]\!]$.  
By Proposition \ref{prop:Gto2F1}, $G_{6n}^{(1)} = j^{-n} E_{4}^{(3n-1)/2} P_{n}(j^{-1})$ holds, so if $G_{6n} \in \mathbb{Z}[\![q]\!]$, then $E_{4}^{-(3n-1)/2} G_{6n}^{(1)} \in \mathbb{Z}[\![q]\!]$ and hence $j^{-n} P_{n}(j^{-1}) \in \mathbb{Z}[\![j^{-1}]\!]$ (note \eqref{claim:fqj}). And conversely, if $P_{n}(t) \in \mathbb{Z}[\![t]\!]$, we have $G_{6n}= j^{-n} E_{4}^{(3n-1)/2} P_{n}(j^{-1}) \in \mathbb{Z}[\![j^{-1}]\!]$ and therefore $G_{6n}^{(1)} \in \mathbb{Z}[\![q]\!]$ holds.
The case of $G_{6n+2}^{(1)}$ can be proved similarly.
\end{enumerate}
\end{proof}

\begin{remark} \label{rem:E4fourthroot}
In the above proof we used that $E_{4}^{1/2} \in 1+q\mathbb{Z}[\![q]\!]$, but a stronger claim holds. The hypergeometric series ${}_{2}F_{1}$ satisfies the following quadratic transformation of Gauss:
\begin{align}
{}_{2}F_{1} \left( \alpha, \beta ; \alpha +\beta + \frac{1}{2} ; 4z(1-z) \right) = {}_{2}F_{1} \left( 2\alpha, 2\beta ; \alpha +\beta + \frac{1}{2} ; z \right) .  \label{eq:quadratictransf}
\end{align}
By putting $(\alpha, \beta, 4z(1-z))=(\tfrac{1}{12}, \tfrac{5}{12}, \tfrac{1728}{j})$ in the above equation, we have
\begin{align*}
E_{4}^{1/4} &= {}_{2}F_{1} \left( \frac{1}{12}, \frac{5}{12} ; 1 ; \frac{1728}{j}   \right) = {}_{2}F_{1} \left( \frac{1}{6}, \frac{5}{6} ; 1 ; z \right) = \sum_{n=0}^{\infty} \binom{3n}{n} \binom{6n}{3n} \left( \frac{z}{432} \right)^{n} , \\
\frac{z}{432} &= \frac{1}{864} \left\{ 1 - \left( 1- \frac{1728}{j} \right)^{1/2}  \right\} = \sum_{m=1}^{\infty} 432^{m-1} C_{m-1} j^{-m} \in j^{-1} \mathbb{Z}[\![j^{-1}]\!] , 
\end{align*}
where $C_{n} :=\tfrac{1}{n+1}\tbinom{2n}{n} = \tbinom{2n}{n}-\tbinom{2n}{n-1}$ is the $n$-th Catalan number. (The second equality holds for $n\ge1$.) Thus we have $E_{4}^{1/4} \in 1+ j^{-1}\mathbb{Z}[\![j^{-1}]\!]$ or equivalently $E_{4}^{1/4} \in 1+ q\mathbb{Z}[\![q]\!]$. 
Actually, it is known the stronger claim $E_{4}^{1/8} \in 1+ q\mathbb{Z}[\![q]\!]$ holds. See \cite{heninger2006integrality} for more details and related results.
\end{remark}

We classify each element of the set \eqref{eq:setE1} according to the remainder of modulo 12 as follows: 
\begin{align*}
\mathcal{S}_{0} &= \{ 12,24 \} , \; \mathcal{S}_{2} = \{2,14,38\} , \; \mathcal{S}_{4} =\mathcal{S}_{0} \oplus 4  = \{16,28\}, \; \mathcal{S}_{6} = \{6,18,30,54,114\}, \\ 
\mathcal{S}_{8} &= \{8,20,32,68,80\}, \;  \mathcal{S}_{10} =\mathcal{S}_{6} \oplus 4 = \{10,22,34,58,118 \}.
\end{align*}
Here the symbol $\{ \mathrm{list} \} \oplus n$ means that for each element in the list the number $n$ is added. From \eqref{claim:G6nG6n4} of Lemma \ref{lem:integrality}, it suffices to show that the function $G_{w}^{(1)}$ has integral Fourier coefficients when the weight $w$ is an element of the set $\mathcal{S}_{0} \cup \mathcal{S}_{2} \cup \mathcal{S}_{6} \cup \mathcal{S}_{8}$.

Put $P_{n}(t) = \sum_{\ell=0}^{\infty} a_{\ell}(n) t^{\ell}$ and $Q_{n}(t) = \sum_{\ell=0}^{\infty} b_{\ell}(n) t^{\ell}$. The first few coefficients are given by
\begin{align*}
a_{1}(n) &= 60 +432n + \frac{60}{n+1}, \; a_{2}(n) = 65700+305856 n+93312 n^{2} + \frac{31320}{n+1} - \frac{27720}{n+2}, \\
b_{1}(n) &= 60 +432n - \frac{84}{n+1}, \; b_{2}(n) = 3492+305856 n+93312 n^{2} - \frac{37800}{n+1} + \frac{32760}{n+2} .
\end{align*}
Consider the condition that these coefficients are integers.
In such a case, the denominator must divide the numerator of the above equations, and so $a_{1}(n)$ and $a_{2}(n)$ are integers if $n \in \{ 0,1,2,3,4,5,9,19 \}$. Similarly, if $n \in \{ 0,1,2,3,5,6,11,13 \}$, $b_{1}(n)$ and $b_{2}(n)$ are integers. 
In these lists of $n$, we exclude the case of $P_{0}(t)$, because it corresponds to the trivial case $G_{0}^{(1)}=1$.
From \eqref{claim:GPQ} of Lemma \ref{lem:integrality}, in order to prove the main theorem, we should show that the power series $P_{n}(t)$ and $Q_{n}(t)$ are actually have integral coefficients for these lists of $n$. That is, we prove that
\begin{itemize}
\item $n \in \{ 1,2,3,4,5,9,19 \} \Rightarrow P_{n}(t) \in \mathbb{Z}[\![t]\!]$, and then $G_{w}^{(1)} \in \mathbb{Z}[\![q]\!]$ for $w \in \mathcal{S}_{0} \cup \mathcal{S}_{6}$,
\item $n \in \{ 0,1,2,3,5,6,11,13 \} \Rightarrow Q_{n}(t) \in \mathbb{Z}[\![t]\!]$, and then $G_{w}^{(1)} \in \mathbb{Z}[\![q]\!]$ for $w \in \mathcal{S}_{2} \cup \mathcal{S}_{8}$.
\end{itemize}
To prove these assertions, we will now rewrite the formal power series $P_{n}(t)$ and $Q_{n}(t)$ into a more manageable form.

Although the symbols are slightly different, in their paper \cite{kaneko2006extremal}, Kaneko and Koike expressed the normalized extremal quasimodular forms by using the monic polynomials $A_{m,a}(X)$ and $B_{m,a}(X)$ as follows:
\begin{align}
&{} G_{12m}^{(1)} = \frac{1}{N_{m,0}} \left( - E_{2} E_{4} E_{6} \Delta^{m-1} A_{m,0}(j) +  \Delta^{m} B_{m,0}(j)  \right), \label{eq:G12mAB} \\
&{} G_{12m+2}^{(1)} = \frac{1}{N_{m,2}} \left( E_{2} \Delta^{m} A_{m,2}(j) - E_{4}^{2} E_{6} \Delta^{m-1} B_{m,2}(j)  \right), \label{eq:G12m2AB}\\
&{} G_{12m+6}^{(1)} = \frac{1}{N_{m,6}} \left( E_{2} E_{4} \Delta^{m} A_{m,6}(j) - E_{6} \Delta^{m} B_{m,6}(j)  \right), \label{eq:G12m6AB}\\
&{} G_{12m+8}^{(1)} = \frac{1}{N_{m,8}} \left( - E_{2} E_{6} \Delta^{m} A_{m,8}(j) + E_{4}^{2} \Delta^{m} B_{m,8}(j)  \right) ,  \label{eq:G12m8AB}
\end{align}
where the normalizing factor $N_{m,a}$ is given below for any $m\ge0$ except for $N_{0,0}=N_{0,2} \coloneqq 1$. 
\begin{align}
N_{m,0} &= 24m \binom{6m}{2m} \binom{12m}{6m} ,\quad N_{m,2} = \frac{12m+1}{12m-1} N_{m,0} , \label{eq:d1nf02} \\
N_{m,6} &= N_{m+1/2,0} = 12(2m+1) \binom{6m+3}{2m+1} \binom{12m+6}{6m+3}, \quad N_{m,8} = \frac{12m+7}{12m+5} N_{m,6}   . \label{eq:d1nf68}
\end{align}
Here, $N_{m,2}$ and $N_{m,8}$ are integers, since $\tfrac{1}{1-2n} \tbinom{2n}{n} \in \mathbb{Z}$ from the proof of \eqref{claim:f12} of Corollary \ref{lem:integrality}.
In particular, the polynomial $A_{n,2}(X)$ is equal to the original Atkin polynomial $A_{n}(X)$ treated in \cite{kaneko1998supersingular}.
We will refer to the polynomials $A_{m,a}(X)$ as the Atkin-like polynomials. 
Note that we have the polynomial corresponding to the quasimodular part as ``$A$'', which is different between us and \cite{kaneko2006extremal}. 
The reason we do not follow the symbols of \cite{kaneko1998supersingular} is to write the best rational function approximation in a unified way, which we will discuss shortly afterwards.
For the convenience of the reader, we provide below a comparison table of these symbols. In the table, polynomials in the same column are equal.
\begin{table}[H]
\caption{Comparison table of the Atkin-like polynomials and its adjoint polynomials}
\begin{center}
\begin{tabular}{|c|c|c|c|c|c|c|c|c|} \hline
\rule[-4pt]{0pt}{15pt} Our symbols & $A_{n,0}$ & $A_{n,2}$  & $A_{n,6}$ & $A_{n,8}$ & $B_{n,0}$ & $B_{n,2}$ &$B_{n,6}$ & $B_{n,8}$   \\ \hline 
\rule[-4pt]{0pt}{17pt} Symbols in \cite{kaneko2006extremal} & $B_{n}^{*}$ & $A_{n}$ & $\widetilde{A}_{n+1}$ & $\widetilde{B}_{n+1}^{*}$ & $A_{n}^{*}$ & $B_{n}$ & $\widetilde{B}_{n+1}$ & $\widetilde{A}_{n+1}^{*}$  \\ \hline
\end{tabular}
\end{center}
\end{table}

We also note that the polynomials $A_{m,a}(X)$ and $B_{m,a}(X)$ are the denominator and numerator, respectively, of the best rational-function approximation\footnote{If it is not the best approximation, it contradicts the extremality of $G_{w}^{(1)}$. As a side note, for example, to get the rational function $B_{m,8}(j)/A_{m,8}(j)$ by \texttt{Mathematica} for certain $m$, we just have to enter  PadeApproximant$[(j - 1728)f_{1}/(j f_{2}), \{ j,\text{Infinity},m \}]$, where $f_{1} = {}_{2}F_{1} \left( 13/12, 5/12 ; 1 ; 1728/j \right)$ and $f_{2}={}_{2}F_{1} \left( 1/12, 5/12 ; 1 ; 1728/j \right)$. } of the following power series. 
\begin{align*}
j(j-1728) \Phi - \frac{B_{m,0}(j)}{A_{m,0}(j)} &= - \frac{N_{m,0}\, G_{12m}^{(1)}}{\Delta^{m} A_{m,0}(j)} = - \frac{N_{m,0}}{j^{2m-1}} + O(j^{-2m}), \\
\Phi - \frac{B_{m,2}(j)}{A_{m,2}(j)} &= \frac{N_{m,2}\, G_{12m+2}^{(1)}}{E_{4}^{2}E_{6}\Delta^{m-1}A_{m,2}(j)} = \frac{N_{m,2}}{j^{2m+1}} +O(j^{-2m-2}), \\
j \Phi - \frac{B_{m,6}(j)}{A_{m,6}(j)} &= \frac{N_{m,6}\, G_{12m+6}^{(1)}}{E_{6}\Delta^{m} A_{m,6}(j)} = \frac{N_{m,6}}{j^{2m+1}} + O(j^{-2m-2}), 
\\
(j-1728) \Phi - \frac{B_{m,8}(j)}{A_{m,8}(j)} &= - \frac{N_{m,8}\, G_{12m+8}^{(1)}}{E_{4}^{2} \Delta^{m} A_{m,8}(j)} = - \frac{N_{m,8}}{j^{2m+1}} + O(j^{-2m-2}) ,
\end{align*}
where $\Phi = \Phi(j^{-1})=E_{2}E_{4}/(j E_{6})$ is the power series with $j^{-1}$ as the variable (recall Proposition~\ref{prop:EisensteinHyp}), and then $(j-1728)\Phi = E_{2}E_{6}/E_{4}^{2}$. 
More generally, for some suitable polynomial $\psi(x) \in \mathbb{Q}[x]$, the orthogonal polynomial that appears when approximating the function $j\psi(j)\Phi$ has already been considered in \cite{basha2004systems} by Basha, Getz and Nover. However, they are treated in the context of a generalization of the Atkin orthogonal polynomials, and not from the point of view of extremal quasimodular forms.

Here and throughout this paper, the symbols $u_{r}$, $U(t)$, and $V(t)$ are defined as the following positive integer and formal power series, respectively:
\begin{align}
&{} u_{r} = \frac{(6r)!}{(3r)!\, r!^{3}} = \binom{2r}{r} \binom{3r}{r} \binom{6r}{3r} = \binom{4r}{r} \binom{5r}{r} \binom{6r}{r} \quad (r\in \mathbb{Z}_{\ge0}), \label{eq:ur} \\
&{} U(t) = {}_{2}F_{1} \left( \frac{1}{12}, \frac{5}{12} ; 1 ; 1728 t \right)^{2} = {}_{3}F_{2} \left( \frac{1}{6}, \frac{1}{2}, \frac{5}{6} ; 1, 1 ; 1728 t \right) = \sum_{r=0}^{\infty} u_{r} t^{r} , \label{eq:U(t)} \\
&{} V(t) = {}_{3}F_{2} \left( \frac{1}{2}, \frac{5}{6}, \frac{7}{6} ; 1, 1 ; 1728 t \right) =\left( 1+6 t \frac{d}{d t} \right) U(t) = \sum_{r=0}^{\infty} (6r+1) u_{r} t^{r} . \label{eq:V(t)}
\end{align}
Note that by using Proposition \ref{prop:EisensteinHyp}, $U(t) = P_{1}(t)$ and $V(t)= (1-1728t)^{-1/2} Q_{0}(t)$ hold. Therefore, $G_{6}^{(1)} \in \mathbb{Z}[\![q]\!]$ and $G_{2}^{(1)} \in \mathbb{Z}[\![q]\!]$ hold with the aid of Lemma \ref{lem:integrality}.

For a given polynomial $\alpha$, we denote its reciprocal polynomial as $\widetilde{\alpha}$, i.e.,  
\begin{align*}
\alpha(j) = \sum_{k=0}^{m} c_{k} j^{k} , \quad \widetilde{\alpha}(t) := t^{m} \alpha (1/t) = \sum_{k=0}^{m} c_{k} t^{m-k} .
\end{align*}
Comparing equations \eqref{eq:G12mAB} to \eqref{eq:G12m8AB} with Proposition \ref{prop:Gto2F1} and using the hypergeometric expression of Eisenstein series in Proposition \ref{prop:EisensteinHyp}, we have
\begin{align}
&{}N_{m,0}\, t^{2m} P_{2m}(t) = - \widetilde{A_{m,0}}(t)\, (1-1728t) V(t) + \widetilde{B_{m,0}}(t) U(t)  , \label{eq:P2mAVBU}   \\ 
&{}N_{m,2} \, t^{2m} (1-1728t)^{-1/2} Q_{2m}(t) = \widetilde{A_{m,2}}(t) V(t) - \widetilde{B_{m,2}}(t) U(t) , \label{eq:Q2mAVBU} \\
&{}N_{m,6} \, t^{2m+1} (1-1728t)^{-1/2} P_{2m+1}(t) = \widetilde{A_{m,6}}(t) V(t) - \widetilde{B_{m,6}}(t) U(t)  , \label{eq:P2m1AVBU}  \\
&{}N_{m,8} \, t^{2m+1} Q_{2m+1}(t) =  - \widetilde{A_{m,8}}(t)\, (1-1728t) V(t) + \widetilde{B_{m,8}}(t) U(t) . \label{eq:Q2m1AVBU}
\end{align}
As we will see later in Section \ref{sec:prfmainthm}, the polynomials $A_{m,a}(X)$ and $B_{m,a}(X)$ are not necessarily elements of $\mathbb{Z}[X]$.
In such a case, multiply both sides of the above equation by an appropriate factor $C$ so that the right-hand side of \eqref{eq:P2mAVBU} to \eqref{eq:Q2m1AVBU} has integral coefficients.
Under this normalization, if all the coefficients of the formal power series on the right-hand side are congruent to $0$ modulo $C N_{m,a}$, then we can conclude that the corresponding formal power series $P_{n}(t)$ and $Q_{n}(t)$ have integral coefficients. Here, for $Q_{2m}(t)$ and $P_{2m+1}(t)$, note \eqref{claim:f12} in Lemma \ref{lem:integrality}.
Thus, in the next section we will investigate in detail the congruence formulas of the formal power series $U (t)$ and $V (t)$ modulo prime powers.

The above equations \eqref{eq:P2mAVBU} to \eqref{eq:Q2m1AVBU} are essentially the relations satisfied by a hypergeometric series ${}_{2}F_{1}$, and can be interpreted in the following two ways. 
\begin{enumerate}
\item Focusing on the left-hand side, from the definitions of the series $P_{n}(t)$ and $Q_{n}(t)$, the term depending on $m$ is a hypergeometric series with a certain rational parameter shifted by an integer. 
Thus, the equations \eqref{eq:P2mAVBU} to \eqref{eq:Q2m1AVBU} are concrete expressions of certain contiguous relations (or three term relations) of a hypergeometric series ${}_{2}F_{1}$.
For more general results of this kind, see \cite{ebisu2012threeterm} by Ebisu.

\item Focusing on the right-hand side, the polynomials of a certain degree or less that are multiplied by the series $U(t)$ and $V(t)$ are chosen so that the vanishing order for $t$ on the left-hand side is as large as possible. 
Therefore, such polynomials are Hermite--Pad\'{e} approximations for $U(t)$ and $V(t)$, and the left-hand side is the corresponding remainder. 
For the Hermite--Pad\'{e} approximation of generalized hypergeometric series ${}_{p}F_{p-1}$, see \cite{nesterenko1995hermite} by Nesterenko. 
\end{enumerate}
Unfortunately, the case treated in the papers \cite{ebisu2012threeterm} and \cite{nesterenko1995hermite} is that the corresponding hypergeometric differential equation has no logarithmic solution, which does not include our case (recall Remark~\ref{rem:logarithmic solution}).


\section{Congruence formulas for $U(t)$ and $V(t)$ modulo prime powers} \label{sec:congUV}
First, note that in the prime factorization of the normalizing factor $N_{m,a}$,  which appears in the proof of the main theorem in Section \ref{sec:prfmainthm}, the exponents of all prime factors above 11 are 1. On the other hand, under the proper normalization described at the end of the previous section, the exponents of the prime factors 2, 3, 5, and 7 are at most 8, 5, 2, and 2, respectively.
Therefore, to prove the main theorem, we need to calculate the specific congruence formulas for $U(t)$ and $V(t)$ modulo $2^{8}, 3^{5}, 5^{2}, 7^{2}$ and the appropriate prime $p \ge 11$. (Of course, results for the largest exponents are valid for smaller ones.)
More specifically, in this section we will prove that $U(t)/U(t^{p})$ and $V(t)/U(t^{p})$ modulo these prime powers give some polynomials or rational functions. 
Throughout this section, for the formal power series $X(t)$ and $Y(t)$, the symbol $X(t) \equiv Y(t) \pmod{p^{s}}$ means that all the power series coefficients of their difference $X(t)-Y(t)$ are congruent to 0 modulo $p^{s}$.

If the prime $p$ is greater than or equal to 5, the following proposition corresponds to the special case of Lemma \ref{lem:Dwork57}, but here we give a simpler alternative proof using Lucas' theorem.
\begin{proposition} \label{prop:UVLucas}
For any prime $p$, we have
\begin{align*}
U(t) &\equiv \left( \sum_{m=0}^{[p/6]} u_{m} t^{m} \right) U(t^{p}) \pmod{p}, \\
V(t) &\equiv \left( \sum_{m=0}^{[p/6]} (6m+1) u_{m} t^{m} \right) U(t^{p}) \pmod{p}.
\end{align*}
Thus, in particular, $u_{r} \equiv 0\pmod{p}$ for $r\ge1$ and $p\in \{2, 3, 5\}$.
\end{proposition}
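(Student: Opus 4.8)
The plan is to deduce everything from a single one-step Lucas-type congruence for the integer sequence $u_r$, namely
\[
u_{pk+m} \equiv u_m\, u_k \pmod p \qquad (k \ge 0,\ 0 \le m \le p-1),
\]
and then to read off the two displayed identities and the ``in particular'' statement by an elementary generating-function manipulation. Throughout I would use the first product expression $u_r = \binom{6r}{3r}\binom{3r}{r}\binom{2r}{r}$ from \eqref{eq:ur} together with Lucas' theorem in its one-step form $\binom{n}{\ell}\equiv \binom{n \bmod p}{\ell \bmod p}\binom{\lfloor n/p\rfloor}{\lfloor \ell/p\rfloor}\pmod p$.

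To establish the one-step congruence I would split on whether $6m<p$, which (since $6\nmid p$) is equivalent to $m\le[p/6]$. If $6m<p$, then writing $r=pk+m$ one has $2r=(2k)p+2m$, $3r=(3k)p+3m$, $6r=(6k)p+6m$ with $2m,3m,6m$ all in $\{0,\dots,p-1\}$ (because $6m<p$), so Lucas applied to each of the three binomials in $u_r$ and multiplied yields $u_r\equiv \binom{6m}{3m}\binom{3m}{m}\binom{2m}{m}\binom{6k}{3k}\binom{3k}{k}\binom{2k}{k}=u_m u_k\pmod p$. If instead $6m\ge p$, i.e.\ $m>[p/6]$, then both sides vanish mod $p$: by Legendre's formula $v_p(u_r)=\sum_{\ell\ge1}\bigl(\lfloor 6 f_\ell\rfloor-\lfloor 3 f_\ell\rfloor\bigr)$ with $f_\ell=\{r/p^\ell\}$, every summand is nonnegative, and the $\ell=1$ summand equals $\lfloor 6m/p\rfloor-\lfloor 3m/p\rfloor\ge1$ because $f_1=m/p\ge1/6$; hence $p\mid u_r$, and the same computation with $k=0$ shows $p\mid u_m$, so $p\mid u_m u_k$ as well.

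Granting the one-step congruence, I would expand $U(t)=\sum_{r\ge0}u_r t^r$ by writing each $r$ uniquely as $pk+m$ with $0\le m<p$; then $u_r\equiv u_m u_k$ gives $U(t)\equiv\bigl(\sum_{m=0}^{p-1}u_m t^m\bigr)\bigl(\sum_{k\ge0}u_k t^{pk}\bigr)=\bigl(\sum_{m=0}^{p-1}u_m t^m\bigr)U(t^p)\pmod p$, and since $u_m\equiv0\pmod p$ for $[p/6]<m<p$ the first factor truncates at $m=[p/6]$. For $V$ the same bookkeeping applies with $6r+1\equiv 6m+1\pmod p$ when $m=r\bmod p$, giving $V(t)=\sum_{r\ge0}(6r+1)u_r t^r\equiv\bigl(\sum_{m=0}^{p-1}(6m+1)u_m t^m\bigr)U(t^p)\equiv\bigl(\sum_{m=0}^{[p/6]}(6m+1)u_m t^m\bigr)U(t^p)\pmod p$. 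Finally, for $p\in\{2,3,5\}$ one has $[p/6]=0$, so the polynomial factor is just $u_0=1$ and $U(t)\equiv U(t^p)\pmod p$; this forces $u_r\equiv0$ whenever $p\nmid r$ and $u_{pk}\equiv u_k$, whence $u_r\equiv0\pmod p$ for all $r\ge1$ by induction on $r$.

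I do not expect a genuine obstacle here; the argument is elementary once Lucas' theorem is invoked. The only point requiring a bit of care is the case $6m\ge p$ of the one-step congruence, where one must check that an oversized lowest base-$p$ digit of $r$ already forces $p\mid u_r$ — handled above by the Legendre-formula estimate $v_p(u_r)\ge\lfloor 6f_1\rfloor-\lfloor 3f_1\rfloor$ — and, correspondingly, verifying that $6m<p$ is exactly the condition making all of $r+r$, $r+2r$, $3r+3r$ carry-free so that Lucas applies cleanly.
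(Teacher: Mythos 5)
Your proposal is correct and follows essentially the same route as the paper: Lucas' theorem applied to the three binomial factors of $u_r$, the decomposition $r=pk+m$ with $0\le m\le p-1$, the resulting factorization of $U(t)$ and $V(t)$ modulo $p$, and iteration/induction for the case $p\in\{2,3,5\}$. The only difference is cosmetic: where the paper disposes of the residues $m>[p/6]$ by applying Lucas once more to show one of the binomials $\binom{2r}{r},\binom{3r}{r},\binom{6r}{3r}$ vanishes mod $p$, you instead use the Legendre-formula estimate $v_{p}(u_{r})\ge\lfloor 6m/p\rfloor-\lfloor 3m/p\rfloor\ge 1$, which is equally valid.
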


\begin{proof}
We first recall Lucas' theorem for the binomial coefficient modulo a prime $p$; for some nonnegative integers $n,m,a$ and $b$ such that $0\le a,b \le p-1$,
\begin{align}
\binom{n p +a}{m p +b} \equiv \binom{n}{m} \binom{a}{b} \pmod{p} \label{eq:Lucas}
\end{align}
holds under the usual convention $\tbinom{0}{0}=1$, and $\tbinom{x}{y}=0$ if $x<y$. 
For some generalizations of Lucas' theorem, we refer to the extensive historical survey \cite{mestrovic2014lucas} by Me\v{s}trovi\'{c}. 

To calculate $u_{l p+m}$ for $l\ge0$ and $0 \le m \le p-1$, we classify by the value of $m$.  
From Lucas' theorem, it is easy to see that if $0\le m < p/6$, the following congruence holds:
\begin{align*}
u_{l p+m} &= \binom{2(l p+m)}{l p+m} \binom{3(l p+m)}{l p+m} \binom{6(l p+m)}{3(l p+m)} \\
&\equiv  \binom{2l}{l} \binom{3l}{l} \binom{6l}{3l} \binom{2m}{m} \binom{3m}{m} \binom{6m}{3m}  \\
&\equiv  u_{l} u_{m} \pmod{p} .
\end{align*}
Similarly, if $p/6 \le m < p/3$, since $0 \le 6m-p<p$ and $3m<p$, so
\begin{align*}
\binom{6(l p+m)}{3(l p+m)} = \binom{(6l+1)p + (6m-p)}{3l p +3m} \equiv \binom{6l+1}{3l} \binom{6m-p}{3m} \pmod{p} .
\end{align*}
Now we assume that $m < p/3$, $(6m-p)-3m=3m-p<0$ and thus the binomial coefficient $\tbinom{6m-p}{3m}$ vanishes. 
Also, if $p/3 \le m<p/2$ and $p/2 \le m<p$, $\binom{3(l p+m)}{(l p+m)}$  and $\binom{2(l p+m)}{l p+m}$ vanish, respectively. Therefore, we have
\begin{align*}
U(t) = \sum_{l=0}^{\infty} \sum_{m=0}^{p-1} u_{l p +m} t^{l p +m} \equiv \sum_{l=0}^{\infty} \sum_{m=0}^{[p/6]} u_{l} u_{m} t^{l p +m} = \left( \sum_{m=0}^{[p/6]} u_{m} t^{m} \right) U(t^{p}) \pmod{p}  .
\end{align*}
The claim for $V(t)$ is obtained from a similar calculation, noting that if $0\le m < p/6$, then $(6(l p +m)+1)u_{l p +m} \equiv (6m+1) u_{m} u_{l} \pmod{p}$. If $p\in \{2,3,5\}$ then $U(t) \equiv U(t^{p}) \equiv U(t^{p^{2}}) \equiv \dotsb \pmod{p}$ holds, we see that $U(t) \equiv 1 \pmod{p}$ and hence $u_{r} \equiv 0 \pmod{p}$ for $r\ge1$.
\end{proof}

\begin{remark} \label{rem:ur0mod8}
For $p=2$, we can prove that $u_{r} \equiv 0 \pmod{2^{3}}$. Since $E_{4} \in 1+ 240q \mathbb{Z}[\![q]\!]$, $E_{4}^{1/2} \in 1+ 120q \mathbb{Z}[\![q]\!]$ and so $E_{4}^{1/2} \in 1+ 120 j^{-1} \mathbb{Z}[\![j^{-1}]\!]$ holds. By comparing \eqref{eq:E4Hyp} and \eqref{eq:U(t)}, we have $U(t) \in 1+ 120t \mathbb{Z}[\![t]\!]$ and obtain the desired result.
\end{remark}

\begin{lemma} \label{lem:Dwork57}
Let $p\ge5$ be a prime number. Let 
\begin{align*}
F(x) = {}_{3}F_{2} \left( \frac{1}{6}, \frac{1}{2}, \frac{5}{6} ; 1, 1 ; x \right) = \sum_{m=0}^{\infty} B(m) x^{m}, \quad F_{s}(x) = \sum_{m=0}^{p^{s}-1} B(m) x^{m}
\end{align*}
and $f^{(n)}(t)=\tfrac{d^{n}}{d t^{n}}f(t)$ denotes the $n$-th derivative $(n\ge0)$. Then the following congruence formulas hold for any $s \in \mathbb{Z}_{\ge0}$:
\begin{align*}
\frac{F(x)}{F(x^{p})} \equiv \frac{F_{s+1}(x)}{F_{s}(x^{p})} \pmod{p^{s+1}}, \quad \frac{F^{(n)}(x)}{F(x)} \equiv \frac{F^{(n)}_{s+1}(x)}{F_{s+1}(x)} \pmod{p^{s+1}} .
\end{align*}
\end{lemma}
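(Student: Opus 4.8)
The plan is to derive the first congruence from the general Dwork-type congruence for hypergeometric series, and then to obtain the second from it by an elementary $p$-adic manipulation. Write $F(x)=\sum_{m\ge 0}B(m)x^{m}$ with $B(m)=\dfrac{(1/6)_{m}(1/2)_{m}(5/6)_{m}}{(m!)^{3}}$, so that $u_{m}=1728^{m}B(m)$. Since each $u_{m}$ is a product of binomial coefficients, hence a rational integer, and $1728=2^{6}\cdot 3^{3}$ is a $p$-adic unit for $p\ge 5$, we get $B(m)\in\mathbb{Z}_{p}$ for every $m$; this is exactly where the hypothesis $p\ge 5$ enters (for $p\in\{2,3\}$ integrality of $B(m)$ already fails, and those primes are handled separately in Proposition~\ref{prop:UVLucas} and Remark~\ref{rem:ur0mod8}). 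The only structural input then required is that the numerator multiset $\{1/6,1/2,5/6\}$ and the denominator multiset $\{1,1,1\}$ are each stable under the standard Dwork map $\psi_{p}$ on rational $p$-adic units in $(0,1]$: indeed $\psi_{p}$ fixes $1$, fixes $1/2$ because $p$ is odd, fixes $1/6$ and $5/6$ when $p\equiv 1\pmod 6$, and interchanges $1/6$ and $5/6$ when $p\equiv 5\pmod 6$. Granting this, the congruence $F(x)/F(x^{p})\equiv F_{s+1}(x)/F_{s}(x^{p})\pmod{p^{s+1}}$ is precisely Dwork's congruence for $F$ (the logarithmic solution of the underlying hypergeometric equation at $x=0$ is no obstruction, as the statement involves only the holomorphic solution $F$), which I would quote from the literature on $p$-adic hypergeometric series by Dwork and Zudilin referred to above.

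For the second congruence I would argue as follows. Since $F_{s+1}(x)$ and $F_{s}(x^{p})$ are units in $\mathbb{Z}_{p}[\![x]\!]$, the first congruence rewrites as $F(x)/F_{s+1}(x)\equiv F(x^{p})/F_{s}(x^{p})\pmod{p^{s+1}}$, an identity of power series with constant term $1$. As $p$ is odd, $\log$ is well defined on $1+x\mathbb{Z}_{p}[\![x]\!]$ and preserves congruences modulo $p^{s+1}$, so $\log F(x)-\log F_{s+1}(x)\equiv \log F(x^{p})-\log F_{s}(x^{p})\pmod{p^{s+1}}$; differentiating and using $\tfrac{d}{dx}g(x^{p})=p\,x^{p-1}g'(x^{p})$ yields
\[
\frac{F'(x)}{F(x)}-\frac{F_{s+1}'(x)}{F_{s+1}(x)}\;\equiv\; p\,x^{p-1}\!\left(\frac{F'}{F}(x^{p})-\frac{F_{s}'}{F_{s}}(x^{p})\right)\pmod{p^{s+1}} .
\]
Now induct on $s$: the inductive hypothesis $\tfrac{F'}{F}-\tfrac{F_{s}'}{F_{s}}\equiv 0\pmod{p^{s}}$ (trivial for $s=0$, i.e. modulo $p^{0}$) forces the right-hand side to vanish modulo $p^{s+1}$, so $\tfrac{F'}{F}\equiv \tfrac{F_{s+1}'}{F_{s+1}}\pmod{p^{s+1}}$. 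Finally, writing $F^{(n)}=F\cdot R_{n}$ with $R_{0}=1$ and $R_{n+1}=R_{n}'+R_{n}\cdot(F'/F)$, each $R_{n}$ is a fixed polynomial with integer coefficients in $F'/F$ and its derivatives; running the same recursion with $F_{s+1}$ in place of $F$ shows that $F'/F\equiv F_{s+1}'/F_{s+1}\pmod{p^{s+1}}$ propagates to $F^{(n)}/F=R_{n}\equiv F^{(n)}_{s+1}/F_{s+1}\pmod{p^{s+1}}$, which is the asserted congruence.

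The main obstacle is the first congruence itself: the sharp modulus $p^{s+1}$ is the substance of Dwork's theory of formal congruences and is not elementary. If it is quoted as a black box, the remaining steps above are routine $p$-adic bookkeeping; if one wanted a self-contained proof, the work would be to run Dwork's inductive argument on the truncations $F_{s}$, for which the parameter stability noted in the first paragraph is exactly the hypothesis that closes the induction. I would also check the edge primes $p=5$ and $p=7$ directly, since there $[p/6]$ is tiny and the $s=0$ case reduces to a couple of explicit identities among $u_{0},\dots,u_{[p/6]}$.
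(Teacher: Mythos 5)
Your proposal is correct, but it handles the second congruence differently from the paper. The paper obtains \emph{both} congruences simultaneously by quoting Dwork's Lemma~3.4, parts (i) and (ii), from \cite{dwork1969padic}: the whole proof there consists of checking Dwork's hypotheses a)--c) (namely $B(0)=1$, the congruence $B(n+mp^{s+1})/B([n/p]+mp^{s})\equiv B(n)/B([n/p]) \pmod{p^{s+1}}$, and $B(n)/B([n/p])\in\mathbb{Z}_{p}$), which follow from Dwork's Corollary~2 because the parameter multiset $\{\tfrac16,\tfrac12,\tfrac56\}$ is stable under the map $\theta\mapsto\theta'$ — exactly the stability you observe. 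You instead cite the literature only for the first congruence and then derive the statement about $F^{(n)}/F$ from it by hand: rewriting it as $F/F_{s+1}\equiv F(x^{p})/F_{s}(x^{p})$, taking formal logarithms, differentiating, and inducting on $s$, then propagating to higher derivatives via $F^{(n)}=F\,R_{n}$ with $R_{n+1}=R_{n}'+R_{n}(F'/F)$. This works: since all the series involved lie in $1+x\mathbb{Z}_{p}[\![x]\!]$ and $p\ge5$, the estimate $(k-1)(s+1)\ge\operatorname{ord}_{p}(k)$ shows $\log$ preserves congruences modulo $p^{s+1}$, differentiation does too, and your induction (hypothesis modulo $p^{s}$, conclusion modulo $p^{s+1}$) closes correctly with the trivial base case. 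What each route buys: the paper's is shorter, since the derivative statement is already part of Dwork's lemma once the hypotheses are verified; yours makes the second congruence self-contained given the first, at the cost of the logarithmic bookkeeping. One small caution: what must be checked to invoke Dwork's first congruence is not merely $B(m)\in\mathbb{Z}_{p}$ but the ratio conditions b) and c) above; your appeal to parameter stability is the right ingredient (it is precisely what Dwork's Corollary~2 consumes), so you should phrase the black-box citation in those terms rather than as integrality of the individual $B(m)$.
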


\begin{proof}
These assertions correspond to the special cases of Lemma 3.4 (i) and (ii) in \cite[p.~45]{dwork1969padic}, respectively. The lemma requires the following  three assumptions: 
\begin{enumerate}[a)]
\item $B(0)=1$.
\item $B(n+mp^{s+1})/B([n/p]+mp^{s}) \equiv B(n)/B([n/p]) \pmod{p^{s+1}}$ for all $n,m,s \in \mathbb{Z}_{\ge0}$. 
\item $B(n)/B([n/p]) \in \mathbb{Z}_{p}$ for all $n \in \mathbb{Z}_{\ge0}$.
\end{enumerate}
Note that since we now consider $F(x)$ to be the formal power series, a domain $\mathfrak{O}$ in the original assumption c) can simply be the ring of $p$-adic integers $\mathbb{Z}_{p}$.

First, the assumption a) clearly holds.
Following \cite[p.~30]{dwork1969padic}, for a given prime number $p$ and some $\theta \in \mathbb{Z}_{p} \cap \mathbb{Q}$, the symbol $\theta'$ denotes the unique element $\mathbb{Z}_{p} \cap \mathbb{Q}$ such that $p \theta' - \theta \in [0,p-1]$.
Put $(\theta_{1}, \theta_{2}, \theta_{3})=(\tfrac{1}{6}, \tfrac{1}{2}, \tfrac{5}{6})$ and then 
\begin{align*}
(\theta_{1}', \theta_{2}' , \theta_{3}' ) =
\begin{cases}
(\tfrac{1}{6}, \tfrac{1}{2}, \tfrac{5}{6}) & \text{ if } p \equiv 1 \pmod{6} \\
(\tfrac{5}{6}, \tfrac{1}{2}, \tfrac{1}{6}) & \text{ if } p \equiv 5 \pmod{6}
\end{cases}
\end{align*}
holds. Therefore, this corresponds to the case of $A(n)=B(n)$ in Corollary 2 of \cite[p.~36]{dwork1969padic},  it can be seen that assumptions b) and c) hold. 
\end{proof}

We rewrite the result of this lemma for $F(x)$ into the following statement for $U(t)$ and $V(t)$ by changing the variables.
\begin{proposition} \label{prop:UV57}
The following congruence formulas are valid.
\begin{align}
U(t) &\equiv (1+20t+10t^{2}) U(t^{5}) \pmod{5^{2}} , \label{eq:U52} \\
V(t) &\equiv (1+15t +5t^{2}) U(t^{5}) \pmod{5^{2}} , \label{eq:V52} \\
U(t) &\equiv \frac{ 1+22 t+7 t^{2}+21 t^{3} +t^{7} +36t^{8} }{ 1+t^{7} }\, U(t^{7}) \pmod{7^{2}} , \\
V(t) &\equiv \frac{ 1+7 t+42 t^{2}+7 t^{3} +43 t^{7} }{ 1+t^{7} }\, U(t^{7}) \pmod{7^{2}} .
\end{align}
\end{proposition}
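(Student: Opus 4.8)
The plan is to derive Proposition \ref{prop:UV57} from Lemma \ref{lem:Dwork57} by making the variable change $x = 1728t$ explicit and computing the relevant truncations $F_s$, $F_{s+1}$ modulo $p^{s+1}$ for the two primes $p=5$ and $p=7$ at level $s=1$. Since $U(t) = F(1728t)$ by \eqref{eq:U(t)}, the first congruence in Lemma \ref{lem:Dwork57} with $s=1$ gives $F(x)/F(x^p) \equiv F_2(x)/F_1(x^p) \pmod{p^2}$, i.e.\ $F(x) F_1(x^p) \equiv F_2(x) F(x^p) \pmod{p^2}$; substituting $x=1728t$ yields $U(t) F_1(1728^p t^p) \equiv F_2(1728t) U(t^p) \pmod{p^2}$. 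So the first task is to identify $F_1(1728^p t^p)$ and $F_2(1728t)$ modulo $p^2$ as the explicit polynomials claimed, and — in the $p=7$ case — to recognize that the inverse of $F_1(1728^7 t^7)$ modulo $7^2$ in $\mathbb{Z}/49\mathbb{Z}[\![t^7]\!]$ is $1/(1+t^7)$ after reduction, which is what produces the rational-function shape.

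The concrete steps I would carry out: First, compute $B(m) = u_m/1728^m$ is not integral, so instead I work directly with $U(t) = \sum u_r t^r$ and note $1728 \equiv 3 \pmod{5^2}$ (since $1728 = 69\cdot 25 + 3$) and $1728 \equiv 43 \equiv -6 \pmod{7^2}$ (since $1728 = 35\cdot 49 + 13$; recompute carefully: $49\cdot 35 = 1715$, so $1728 \equiv 13 \pmod{49}$). Actually the cleanest route is to keep everything in terms of $u_r$: from Lemma \ref{lem:Dwork57} applied to $F$ and then the substitution, one gets $U(t) \equiv \Pi_{p,s}(t)\, U(t^p) \pmod{p^{s+1}}$ where $\Pi_{p,1}(t) = F_2(1728t)/F_1(1728^p t^p) \bmod p^2$, a rational function in $t$ with $p$-integral coefficients. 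Then I would: (i) for $p=5$, $s=1$, compute $F_2(x) = \sum_{m=0}^{24} B(m)x^m$ modulo $25$ and $F_1(x^5) = \sum_{m=0}^{4} B(m) x^{5m}$ modulo $25$, translate via $x = 1728 t$, and check the quotient reduces to the polynomial $1+20t+10t^2$ (the denominator $F_1(1728^5 t^5)$ must reduce to $1$ modulo $25$, which happens precisely because $u_1,\dots,u_4 \equiv 0 \pmod{25}$ — consistent with Proposition \ref{prop:UVLucas} giving $u_r \equiv 0 \pmod 5$, strengthened here); (ii) likewise for $V(t)$, using $V(t) = (1+6t\frac{d}{dt})U(t)$ and \eqref{eq:V(t)}, either apply the second congruence of Lemma \ref{lem:Dwork57} with $n=1$ to handle $F'$, or simply differentiate the already-established congruence for $U$; (iii) for $p=7$, $s=1$, repeat with $F_2 = \sum_{m=0}^{48} B(m) x^m$ and $F_1(x^7) = \sum_{m=0}^{6} B(m) x^{7m}$ modulo $49$, where now the denominator genuinely reduces to $1 + t^7$ (i.e.\ $u_r \not\equiv 0 \pmod{49}$ for some $1\le r\le 6$, though $u_r \equiv 0 \pmod 7$), forcing the rational-function form; carrying the division out modulo $49$ and matching against $(1+22t+7t^2+21t^3+t^7+36t^8)/(1+t^7)$ completes it, and similarly for $V$.

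For the $V$-congruences specifically, I would prefer to avoid re-running Dwork's lemma for $F'$ and instead argue: since $U(t) \equiv \Pi(t) U(t^p) \pmod{p^2}$ with $\Pi$ a fixed rational function, apply $1 + 6t\frac{d}{dt}$ to both sides; using $\frac{d}{dt}U(t^p) = p t^{p-1} U'(t^p)$ and $p^2 \mid p \cdot p$, the cross term $6t \cdot \Pi(t) \cdot p t^{p-1} U'(t^p)$ is $\equiv 0 \pmod{p^2}$ only if we also know $6 p \Pi(t) t^p U'(t^p)$ vanishes mod $p^2$ — but $p\mid 6p$ is not enough, so this needs $U'(t^p) \equiv 0$-type control or a direct computation. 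The safer path, and the one I expect to be the main obstacle, is therefore the honest truncation computation modulo $49$ for $V$ (and the division by $1+t^7$): it requires the values $B(m) \bmod 49$ for $0 \le m \le 48$, equivalently $u_m \bmod 49$ and $1728^m \bmod 49$, and then a power-series inversion of $1+\sum_{m=1}^{6} B(m) 1728^{7m} t^{7m}$ in $(\mathbb{Z}/49)[\![t]\!]$. This is a finite but delicate computation; everything else (verifying assumptions a)–c) for Dwork's lemma, which is already done in Lemma \ref{lem:Dwork57}, and the change of variables) is routine. I would organize the write-up as: recall $U(t)=F(1728t)$, invoke Lemma \ref{lem:Dwork57} at $s=1$, record the needed $u_m \bmod p^2$ in a short table, perform the two divisions, and read off the four stated identities.
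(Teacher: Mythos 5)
Your overall framework (Lemma \ref{lem:Dwork57} at $s=1$, change of variable $x=1728t$, explicit truncations mod $p^{2}$, and handling $V$ via the logarithmic-derivative congruence $F'/F\equiv F_2'/F_2$ rather than by differentiating the $U$-congruence) is the paper's strategy, but there is a genuine gap at the very first display: substituting $x=1728t$ into Dwork's congruence produces $F(x^{p})=F(1728^{p}t^{p})$, \emph{not} $U(t^{p})=F(1728\,t^{p})$, and you silently identify the two. The discrepancy is $\sum_{r\ge1}u_{r}\bigl(1728^{(p-1)r}-1\bigr)t^{pr}$. For $p=5$ this is harmless, but only after an argument: $u_{r}\equiv0\pmod 5$ for $r\ge1$ (Proposition \ref{prop:UVLucas}) together with $1728^{4r}\equiv1\pmod 5$ gives $U(t^{5})\equiv F(1728^{5}t^{5})\pmod{5^{2}}$, which is exactly the auxiliary step the paper inserts. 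For $p=7$ the identification is false: $u_{1}=120\equiv1\pmod 7$ (your claim ``$u_{r}\equiv0\pmod 7$'' is wrong, as is ``$u_{1},\dots,u_{4}\equiv0\pmod{25}$'', since $u_{1}\equiv20\pmod{25}$), and one finds $F(1728^{7}t^{7})/U(t^{7})\equiv(1+48t^{14})/(1+35t^{7}+13t^{14})\not\equiv1\pmod{7^{2}}$. The paper obtains this correction factor by computing $F(-t)/F(1728t)$ via Dwork's lemma (using $1728^{7}\equiv-1\pmod{49}$) and then substituting $t\mapsto t^{7}$. If you omit it, your recipe $U(t)/U(t^{7})\equiv F_{2}(1728t)/F_{1}(1728^{7}t^{7})$ yields $(1+22t+7t^{2}+21t^{3}+36t^{7}+22t^{8})/(1+t^{7})$ instead of the stated $(1+22t+7t^{2}+21t^{3}+t^{7}+36t^{8})/(1+t^{7})$, i.e.\ a wrong congruence; the same defect then propagates to the $V$-formula at $p=7$.

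A smaller inaccuracy: for $p=5$ the denominator $F_{1}(1728^{5}t^{5})$ does \emph{not} reduce to $1$ modulo $25$ (it is $\equiv1+20t^{5}+\cdots$); what actually happens is that the quotient $F_{2}(x)/F_{1}(x^{5})$ reduces to the polynomial $1+15x+15x^{2}$ modulo $25$ because $(1+10x^{5}+10x^{10})F_{1}(x^{5})\equiv1$. Similarly, at $p=7$ the denominator $1+t^{7}$ does not come from inverting $F_{1}(1728^{7}t^{7})$ alone; it emerges from the factorization $F_{1}(x^{7})\equiv(1+48x^{7})(1+14x^{7}+21x^{14})\pmod{49}$ combined with the correction factor described above. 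So the proposal needs two repairs: (i) justify $U(t^{5})\equiv F(1728^{5}t^{5})\pmod{25}$, and (ii) compute and insert the nontrivial factor $F(1728^{7}t^{7})/U(t^{7})$ modulo $49$ before reading off the $p=7$ identities.
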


\begin{proof}
Assuming that $p=5$ and $s=1$ in Lemma \ref{lem:Dwork57}, the direct calculation gives
\begin{align*}
&{} F_{2}(x) \equiv 1 +15x +15x^{2} + 15x^{5} +15x^{10} \pmod{5^2}, \\
&{} F_{1}(x^{5}) \equiv 1 +15x^{5} +15x^{10} \pmod{5^{2}}, \\
&{} (1+10x^{5}+10x^{10}) F_{1}(x^{5}) \equiv 1 \pmod{5^{2}},
\end{align*}
and so
\begin{align*}
\frac{F_{2}(x)}{F_{1}(x^{5})} \equiv (1+10x^{5}+10x^{10}) F_{2}(x) \equiv 1+15x+15x^{2} \pmod{5^{2}} .
\end{align*}
On the other hand, $u_{r} \equiv 0 \pmod{5}$ for $r\ge1$ from Proposition \ref{prop:UVLucas}, and $1728^{4r} \equiv 1  \pmod{5}$ holds, thus we have
\begin{align*}
U(t^{5}) - F(1728^{5}t^{5}) = \sum_{r=1}^{\infty} u_{r} \cdot (1-1728^{4r}) t^{5r} \equiv 0 \pmod{5^2}.
\end{align*}
Hence, with the help of Lemma \ref{lem:Dwork57}, we obtain the congruence formula \eqref{eq:U52} for $U(t)$:
\begin{align}
\frac{U(t)}{U(t^{5})} \equiv \frac{F(1728t)}{F(1728^{5}t^{5})} \equiv \frac{F_{2}(1728t)}{F_{1}(1728^{5}t^{5})} \equiv 1+20t+10t^{2} \pmod{5^{2}}.  \label{eq:UtUt5}
\end{align}
Similarly, we have
\begin{align*}
&{} (1 +10x +10x^{2} + 10x^{5} +10x^{10}) F_{2}(x) \equiv 1 \pmod{5^2} , \\
&{} \frac{d}{d x} F_{2}(x) \equiv 15 + 5x \pmod{5^{2}},
\end{align*}
and so 
\begin{align*}
\frac{\tfrac{d}{d x} F(x)}{F(x)} \equiv \frac{\tfrac{d}{d x} F_{2}(x)}{F_{2}(x)} \equiv 15 +5x \pmod{5^{2}}. 
\end{align*}
Note that changing the variables $x=1728t$, since $1728\tfrac{d}{d x} F(x) = \tfrac{d}{d t} U(t)$, we have
\begin{align}
\frac{\tfrac{d}{d t} U(t)}{U(t)} \equiv 1728 (15+5 \cdot 1728t) \equiv 20+20t \pmod{5^{2}}. \label{eq:dUtUt}
\end{align}
By combining \eqref{eq:UtUt5} and \eqref{eq:dUtUt}, we obtain the congruence formula \eqref{eq:V52} for $V(t)$:
\begin{align*}
\frac{V(t)}{U(t^{5})} &= \frac{1}{U(t^{5})}\left( 1+6 t \frac{d}{d t} \right) U(t) = \left( 1 +6 t \frac{\frac{d}{d t} U(t)}{U(t)} \right) \frac{U(t)}{U(t^{5})} \\
&\equiv (1 +6t (20+20t) ) (1+20t+10t^{2}) \equiv 1+15t +5t^{2} \pmod{5^{2}}. 
\end{align*}
Next, assuming that $p=7$ and $s=1$ in Lemma \ref{lem:Dwork57}, the direct calculation gives
\begin{align*}
&{} F_{2}(x) \equiv 1+13 x+7 x^{2}+28 x^{3}+27 x^{7}+22 x^{8} \\
&{} \qquad \qquad +7 x^{14}+42 x^{15}+28 x^{21}+21 x^{22} \pmod{7^{2}}, \\
&{} F_{1}(x^{7}) \equiv 1+13 x^{7}+7 x^{14}+28 x^{21} \equiv (1+48 x^{7}) (1+14x^{7} +21 x^{14}) \pmod{7^{2}}, \\
&{} (1+35x^{7} +28 x^{14}) F_{1}(x^{7}) \equiv 1+48 x^{7} \pmod{7^{2}},
\end{align*}
and so
\begin{align*}
\frac{F_{2}(x)}{F_{1}(x^{7})} &\equiv \frac{ (1+35x^{7} +28 x^{14}) F_{2}(x)}{ 1+48 x^{7}} \\ 
&\equiv \frac{1+13 x+7 x^{2}+28 x^{3}+13 x^{7}+36 x^{8}}{ 1+48 x^{7}}  \pmod{7^{2}} .
\end{align*}
Unlike the case of $p=5$, $U(t^{7}) - F(1728^{7}t^{7})$ is not congruent $0$ modulo $7^{2}$, so we first calculate the following congruence relation. Noting that the congruence $1728^{7} \equiv -1 \pmod{7^{2}}$,
\begin{align*}
\frac{F(-t)}{F(1728t)} &= \frac{F(1728^{7}t^{7})}{F(1728t)} \cdot \frac{F(-t)}{F(1728^{7}t^{7})} \equiv \left(\frac{F(1728t)}{F(1728^{7}t^{7})}\right)^{-1}  \frac{F(-t)}{F(-t^{7})}  \\
&\equiv  \left( \frac{F_{2}(1728t)}{F_{1}(1728^{7}t^{7})}\right)^{-1} \frac{F_{2}(-t)}{F_{1}(-t^{7})} \equiv \frac{F_{2}(-t)}{F_{2}(1728t)} \equiv \frac{1+48t^{2}}{1+35t+13t^{2}} \pmod{7^{2}} .
\end{align*}
Replacing $t$ with $t^{7}$ in the above equation, we have
\begin{align*}
\frac{F(1728^{7}t^{7})}{U(t^{7})} \equiv \frac{F(-t^{7})}{F(1728t^{7})} \equiv \frac{1+48t^{14}}{1+35t^{7}+13t^{14}} \pmod{7^{2}}
\end{align*}
and hence
\begin{align*}
\frac{U(t)}{U(t^{7})} &= \frac{F(1728^{7}t^{7})}{U(t^{7})} \cdot \frac{F(1728t)}{F(1728^{7}t^{7})} \equiv \frac{1+48t^{14}}{1+35t^{7}+13t^{14}} \cdot \frac{F_{2}(1728t)}{F_{1}(1728^{7}t^{7})} \\
&\equiv \frac{ 1+22 t+7 t^{2}+21 t^{3} +t^{7} +36t^{8} }{ 1+t^{7} } \pmod{7^{2}} .
\end{align*}
The calculation for $V(t)$ is the same as for the case of $p=5$. By substituting
\begin{align*}
\frac{\tfrac{d}{d t} U(t)}{U(t)} &\equiv 1728 \frac{\left. \tfrac{d}{d x} F_{2}(x) \middle|_{x=1728t} \right.}{F_{2}(1728t)} \\
&\equiv \frac{22+14 t+14 t^{2}+7 t^{6}+15 t^{7}+7 t^{14}+21 t^{21}}{F_{2}(1728t)} \pmod{7^{2}}
\end{align*}
into the corresponding part, we obtain
\begin{align*}
\frac{V(t)}{U(t^{7})} &= \frac{1}{U(t^{7})}\left( 1+6 t \frac{d}{d t} \right) U(t) = \left( 1 +6 t \frac{\frac{d}{d t} U(t)}{U(t)} \right) \frac{U(t)}{U(t^{7})} \\
&\equiv  \frac{1+7 t+42 t^{2}+7 t^{3}+15 t^{7}+7 t^{14}+21 t^{21}}{F_{2}(1728t)} \cdot \frac{U(t)}{U(t^{7})} \\
&\equiv \frac{ 1+7 t+42 t^{2}+7 t^{3} +43 t^{7} }{ 1+t^{7} }  \pmod{7^{2}}. 
\end{align*}
This completes the proof of the proposition.
\end{proof}

Unfortunately, since $\tfrac{1}{2} \not\in \mathbb{Z}_{2} \cap \mathbb{Q}$ and $\tfrac{1}{6}, \tfrac{5}{6}  \not\in \mathbb{Z}_{3} \cap \mathbb{Q}$, Lemma \ref{lem:Dwork57} cannot be applied directly to $p=2$ and $3$.
Therefore, we consider the following formal power series to which Lemma \ref{lem:Dwork57} can be applied:
\begin{align}
F(z) &= {}_{2}F_{1} \left( \frac{1}{6}, \frac{5}{6} ; 1 ; 432z \right) = \sum_{m=0}^{\infty} A(m) z^{m}, \quad F_{s}(z) = \sum_{m=0}^{p^{s}-1} A(m) z^{m} , \label{eq:Amzm} \\
A(m) &= A_{6}(m) = \frac{(6m)!}{(3m)!(2m)!m!} = \binom{3m}{m}\binom{6m}{3m} \quad (m \in \mathbb{Z}_{\ge0}). \label{eq:intseqAm}
\end{align}

\begin{lemma} \label{lem:Zudilin}
Suppose that the sequence of integers $A(m)$ defined by \eqref{eq:intseqAm} and let $p \in \{2,3\}$. Then for all nonnegative integers $a,b,s$, one has 
\begin{align}
\frac{A(a+b p^{s+1})}{A([a/p]+b p^{s})} \equiv \frac{A(a)}{A([a/p])} \pmod{p^{s+1}}, \quad \frac{A(a)}{A([a/p])} \in \mathbb{Z}_{p} .  \label{eq:congpropA}
\end{align}
\end{lemma}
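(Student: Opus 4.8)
The two assertions collected in \eqref{eq:congpropA} say precisely that the integer sequence $A(m)=\binom{3m}{m}\binom{6m}{3m}$ satisfies the Dwork congruences at the prime $p$, and the plan is to deduce them from a prime-uniform version, due to Zudilin, of the Dwork congruences for factorial-ratio sequences. One should first observe that the route taken in Lemma~\ref{lem:Dwork57} for $p\ge 5$ is unavailable here: by the Gauss multiplication formula one has $A(m)=432^{m}\,(1/6)_{m}(5/6)_{m}/(m!)^{2}$, and the parameters $\tfrac16$, $\tfrac56$ lie in neither $\mathbb{Z}_{2}\cap\mathbb{Q}$ nor $\mathbb{Z}_{3}\cap\mathbb{Q}$, so Dwork's Corollary~2, which supplied conditions (b) and (c) in the proof of Lemma~\ref{lem:Dwork57}, cannot be invoked verbatim.

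Accordingly I would begin by isolating the two structural facts that such a lemma takes as input. First, integrality: $A(m)=(6m)!/((3m)!\,(2m)!\,m!)$ is a factorial ratio attached to the balanced tuple $6=3+2+1$, the associated step-function inequality $\lfloor 6x\rfloor\ge\lfloor 3x\rfloor+\lfloor 2x\rfloor+\lfloor x\rfloor$ $(x\in\mathbb{R})$ holds, and indeed the identity $A(m)=\binom{3m}{m}\binom{6m}{3m}$ makes $A(m)\in\mathbb{Z}$ transparent. Second, the $p$-adic mass of the auxiliary factor: $v_{2}(432^{m})=4m$ and $v_{3}(432^{m})=3m$, which is exactly the room needed to absorb the $p$-adic denominators carried by $(1/6)_{m}$ and $(5/6)_{m}$ once these Pochhammer symbols are examined factor by factor.

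For the $p$-integrality claim $A(a)/A(\lfloor a/p\rfloor)\in\mathbb{Z}_{p}$ I would argue directly by Kummer's theorem: since $A(m)=\binom{6m}{3m,\,2m,\,m}$, the valuation $v_{p}(A(m))$ equals the number of carries in the base-$p$ addition $3m+2m+m$. Writing $a=pq+r$ with $0\le r<p$, one checks that the carries produced in $3a+2a+a$ dominate those produced in $3q+2q+q$, the former only picking up extra carries propagating upward from the units digit; as $r$ ranges over $\{0,1\}$ when $p=2$ and over $\{0,1,2\}$ when $p=3$, this is a short finite verification, and it yields $v_{p}(A(a))\ge v_{p}(A(q))$.

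The main step is the Dwork congruence itself, and here I would run Zudilin's telescoping (``Dwork map'') argument in the form valid for $p\in\{2,3\}$: introduce the truncations $F_{s}(z)=\sum_{m<p^{s}}A(m)z^{m}$, split the power $432^{m}=2^{4m}3^{3m}$ across the factors of $(1/6)_{m}(5/6)_{m}/(m!)^{2}$ so that every resulting factor becomes $p$-integral, and then prove by induction on $s$ the required congruences, the induction being fed by the analogue of Dwork's conditions (a)--(c) for these decomposed factors; the relation $6=3+2+1$ is precisely what permits this splitting and forces the telescoped products to agree modulo $p^{s+1}$. I expect the only genuine obstacle to be this bookkeeping — verifying that the $2$- and $3$-powers in $432^{m}$ can be allocated so that the step-(b) congruences close up uniformly in $s$, i.e. that our $A(m)$ really falls within the scope of the cited lemma of Zudilin. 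Once that is in place, the two displayed congruences are exactly its conclusions, and the proof is complete.
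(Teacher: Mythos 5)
The first congruence in \eqref{eq:congpropA} --- the actual content of the lemma --- is never established in your proposal. You announce that you would ``run Zudilin's telescoping argument'' after distributing the powers $432^{m}=2^{4m}3^{3m}$ over the Pochhammer factors, and you explicitly leave open, as ``bookkeeping'', whether $A(m)$ really falls within the scope of Zudilin's lemma. But that question is exactly what a proof has to settle, and the paper's argument consists of settling it: $A(m)=(6m)!/((3m)!\,(2m)!\,m!)$ is verbatim the case $N=6$ of Lemmas 11 and 12 of \cite{zudilin2002integrality} (no splitting of $432^{m}$ is needed --- Zudilin works directly with the factorial ratio, which is already $p$-integral), and the first assertion of \eqref{eq:congpropA} is read off from the identity displayed in the proof of Zudilin's Lemma 12 by the substitution $n=n_{1}+n_{2}$, $a=v+up+n_{1}p^{s+1}$, $b=n_{2}$, while the second follows from Zudilin's Eq.~(34) together with the definition of the constant $k$. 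A smaller but related confusion: the truncations $F_{s}(z)=\sum_{m<p^{s}}A(m)z^{m}$ play no role in proving \eqref{eq:congpropA}; they enter only at the next step (Dwork's Lemma 3.4 of \cite{dwork1969padic}, i.e.\ Lemma \ref{lem:Dwork23}), for which \eqref{eq:congpropA} supplies the hypotheses b) and c). Your plan thus conflates the statement to be proved with the consequence it is meant to feed, and an induction ``fed by the analogue of Dwork's conditions (a)--(c) for these decomposed factors'' is circular as described.

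Your treatment of the second assertion, the $p$-integrality of $A(a)/A([a/p])$, is a genuinely different and viable route (the paper instead extracts it from Zudilin's valuation formula), but as written the key step --- that the carries in $3a+2a+a$ dominate those in $3q+2q+q$ --- is asserted rather than proved, and carries out of the units digit do interact with the higher digits, so the ``short finite verification'' is not automatic. It can be closed cleanly without case analysis: by Legendre's formula, $v_{p}(A(m))=\bigl(S_{p}(3m)+S_{p}(2m)+S_{p}(m)-S_{p}(6m)\bigr)/(p-1)$ with $S_{p}$ the base-$p$ digit sum; since $S_{2}(2m)=S_{2}(m)$ and $S_{2}(6m)=S_{2}(3m)$ one gets $v_{2}(A(m))=2S_{2}(m)$, and likewise $v_{3}(A(m))=S_{3}(m)$, so that $v_{p}(A(a))-v_{p}(A([a/p]))$ equals $2(a\bmod 2)$, resp.\ $a\bmod 3$, which is nonnegative. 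This repairs the easy half, but it does not supply the missing proof of the main congruence, which in your write-up remains an unverified appeal to a result you are not sure applies.
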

Although it looks a slightly different, this lemma corresponds (partially) to the case of $N=6$ of Lemmas 11 and 12 in \cite{zudilin2002integrality} by Zudilin. We will not prove the above lemma again here, but will explain  below how to rewrite the calculation in \cite{zudilin2002integrality}. 
From the equation that appeared in the proof of Lemma~12 in \cite{zudilin2002integrality}, for all nonnegative integers $u,v,n,s$ such that $0\le u <p^{s}$ and $0\le v <p$, we have
\begin{align*}
\frac{A(v +up +n p^{s+1})}{A(u+n p^{s})} = \frac{A(v+up)}{A(u)} \left(1+O(p^{s+1}) \right) .
\end{align*}
By putting $n=n_{1}+n_{2},\, a=v+up+n_{1}p^{s+1} \in \mathbb{Z}_{\ge0}$ and $b=n_{2} \in \mathbb{Z}_{\ge0}$ in this equation, we obtain
\begin{align*}
\frac{A(a+b p^{s+1})}{A([a/p]+b p^{s})} \equiv \frac{A(v+up)}{A(u)} \pmod{p^{s+1}}.
\end{align*}
Since this congruence formula holds even when $b=0$, we obtain the first assertion in \eqref{eq:congpropA}. 
Next, by combining \cite[Eq.~(34)]{zudilin2002integrality} and the definition of a constant $k$ \cite[Eq.~(1)]{zudilin2002integrality} depending on $N$, we have
\begin{align*}
\mathrm{ord}_{p} \frac{A(v+mp)}{A(mp)} = \frac{k v}{p-1} \in \mathbb{Z}_{\ge 0}
\end{align*}
for all nonnegative integers $v,m$ such that $0\le v <p$. 
The second assertion in \eqref{eq:congpropA} can be obtained by rewriting $v+mp$ as $a$ in this equation. Consequently, Lemma \ref{lem:Zudilin} guarantees assumptions b) and c) of Lemma 3.4 in \cite[p.~45]{dwork1969padic}, and hence the following lemma holds.
\begin{lemma} \label{lem:Dwork23}
Let $F(z)$ and $F_{s}(z)$ be the formal power series and polynomials defined by \eqref{eq:Amzm} respectively. For $p \in \{2,3\}$, the following congruence formulas hold for any $s \in \mathbb{Z}_{\ge0}$:
\begin{align*}
\frac{F(z)}{F(z^{p})} \equiv \frac{F_{s+1}(z)}{F_{s}(z^{p})} \pmod{p^{s+1}}, \quad \frac{F^{(n)}(z)}{F(z)} \equiv \frac{F^{(n)}_{s+1}(z)}{F_{s+1}(z)} \pmod{p^{s+1}} .
\end{align*}
Here, $F^{(n)}(z)=\tfrac{d^{n}}{d z^{n}}F(z)$ denotes the $n$-th derivative $(n\ge0)$.
\end{lemma}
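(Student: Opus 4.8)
The plan is to deduce Lemma~\ref{lem:Dwork23} directly from parts (i) and (ii) of Lemma~3.4 in \cite{dwork1969padic}, in exact parallel with the proof of Lemma~\ref{lem:Dwork57}; the only difference is that the relevant coefficient sequence is now the integer sequence $A(m)=\binom{3m}{m}\binom{6m}{3m}$ attached to the Gaussian series $F(z)={}_{2}F_{1}(\tfrac16,\tfrac56;1;432z)$ rather than the ${}_{3}F_{2}$ sequence $B(m)$. First I would record that $A(m)$ is genuinely the $m$-th Taylor coefficient of $F(z)$ — the factor $432^{m}$ having been absorbed into $A(m)$, so that $A(m)\in\mathbb{Z}$ — and hence that $F_{s}(z)=\sum_{m=0}^{p^{s}-1}A(m)z^{m}$ is the truncation of $F$ modulo $z^{p^{s}}$. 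In particular $A(0)=1$, which is hypothesis~a) of Dwork's Lemma~3.4.

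Next I would observe that hypotheses~b) and c) of Dwork's Lemma~3.4 — namely $A(a+bp^{s+1})/A([a/p]+bp^{s})\equiv A(a)/A([a/p])\pmod{p^{s+1}}$ for all $a,b,s\in\mathbb{Z}_{\ge0}$, and $A(a)/A([a/p])\in\mathbb{Z}_{p}$ for all $a\in\mathbb{Z}_{\ge0}$ — are precisely the two assertions of Lemma~\ref{lem:Zudilin} for $p\in\{2,3\}$, which in turn was reduced to Zudilin's estimates in \cite{zudilin2002integrality} in the discussion preceding the statement. As in the proof of Lemma~\ref{lem:Dwork57}, treating $F$ as a formal power series allows the domain $\mathfrak{O}$ appearing in Dwork's hypothesis~c) to be taken to be $\mathbb{Z}_{p}$, so nothing about $p$-adic convergence needs checking. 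With a), b), c) in hand, part~(i) of Dwork's lemma yields $F(z)/F(z^{p})\equiv F_{s+1}(z)/F_{s}(z^{p})\pmod{p^{s+1}}$ and part~(ii) yields $F^{(n)}(z)/F(z)\equiv F^{(n)}_{s+1}(z)/F_{s+1}(z)\pmod{p^{s+1}}$, which is exactly the claim.

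I do not expect a genuine obstacle here: the substantive input is Lemma~\ref{lem:Zudilin}, which is already available, so the deduction is a matter of citation. The only points needing a moment's care are bookkeeping ones — confirming that Dwork's truncation and indexing conventions agree with the $F_{s}$ defined in \eqref{eq:Amzm}, that $[a/p]$ denotes the same floor operation in both places, and that Lemma~\ref{lem:Zudilin} covers exactly the range of triples $(a,b,s)$ that Dwork requires (all of $\mathbb{Z}_{\ge0}^{3}$, including the boundary case $b=0$, which the text preceding Lemma~\ref{lem:Dwork23} already flags). Once these conventions are matched up, the lemma follows; if desired one could also note, for the explicit computations modulo $2^{8}$ and $3^{5}$ needed later, that the polynomials $F_{s}(z)$ are easily written down for small $s$.
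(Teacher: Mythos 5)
Your proposal matches the paper's own argument: the paper likewise proves this lemma by checking hypothesis a) trivially, invoking Lemma~\ref{lem:Zudilin} (reduced to Zudilin's estimates) for hypotheses b) and c) of Dwork's Lemma~3.4, and then citing parts (i) and (ii) of that lemma, with the formal-power-series viewpoint letting $\mathfrak{O}=\mathbb{Z}_{p}$. No gaps; this is essentially the same proof.
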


\begin{proposition} \label{prop:UV23}
The following congruence formulas are valid.
\begin{align}
U(t) &\equiv (1+120 t +96 t^{2} +128 t^{3} ) U(t^2) \pmod{2^{8}}, \label{eq:U28} \\
V(t) &\equiv (1+72 t +128 t^{2} +128 t^{3} +64 t^{4} + 128 t^{8} ) U(t^2) \pmod{2^{8}}, \label{eq:V28} \\
\begin{split}
U(t) &\equiv ( 1+120 t+54 t^{2}+189 t^{3}+135 t^{4} +81 t^{5} +162 t^{6} +81 t^{7} + 162 t^{10}) \\
&{} \quad \times U(t^{3})   \pmod{3^{5}}, 
\end{split} \label{eq:U35} \\
\begin{split}
V(t) &\equiv ( 1+111 t+216 t^{2}+162 t^{3}+135 t^{4} + 81 t^{5} +81 t^{7} +162 t^{9} +162 t^{10} )\\
&{} \quad \times U(t^{3})  \pmod{3^{5}} .
\end{split} \label{eq:V35}
\end{align}
\end{proposition}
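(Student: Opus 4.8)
The plan is to transfer the Dwork-type congruences of Lemma~\ref{lem:Dwork23} for the hypergeometric series $F$ of \eqref{eq:Amzm} to $U(t)$ and $V(t)$ through the quadratic transformation recorded in Remark~\ref{rem:E4fourthroot}, following the strategy used in the proof of Proposition~\ref{prop:UV57}. Put
\[
v(t):=\frac{1-\sqrt{1-1728t}}{864}=\sum_{m\ge1}432^{m-1}C_{m-1}\,t^{m}\in t\,\mathbb{Z}[\![t]\!].
\]
Then Remark~\ref{rem:E4fourthroot} gives $U(t)^{1/2}={}_{2}F_{1}(\tfrac{1}{12},\tfrac{5}{12};1;1728t)=F(v(t))$, hence $U(t)=F(v(t))^{2}$; moreover one checks directly that $t=v(t)\bigl(1-432v(t)\bigr)$ and $v'(t)=(1-1728t)^{-1/2}$. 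The crucial simplification is that $432^{2}=2^{8}\cdot 3^{6}$, so that $v(t)\equiv t+432t^{2}$ and $v'(t)\equiv 1+864t$ both modulo $2^{8}$ and modulo $3^{5}$; thus, to the precision we need, the substitution $z\mapsto v(t)$ is substitution into an explicit quadratic polynomial.

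Fix $p\in\{2,3\}$ with corresponding precision $p^{s+1}\in\{2^{8},3^{5}\}$, and write
\[
\frac{U(t)}{U(t^{p})}=\left(\frac{F(v(t))}{F(v(t)^{p})}\right)^{2}\left(\frac{F(v(t)^{p})}{F(v(t^{p}))}\right)^{2}.
\]
For the first factor, substituting the integral power series $z=v(t)$ into the congruence $F(z)/F(z^{p})\equiv F_{s+1}(z)/F_{s}(z^{p})\pmod{p^{s+1}}$ of Lemma~\ref{lem:Dwork23} reduces the computation to the truncations $F_{s+1}$ and the inverse of $F_{s}(z^{p})$ modulo $p^{s+1}$, which are polynomials one computes explicitly (most of the binomial products $A(m)$ involved are already divisible by $p^{s+1}$). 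The second factor quantifies the failure of $z\mapsto v(t)$ to commute with the Frobenius $t\mapsto t^{p}$: since $v(t)\equiv t\pmod{p^{3}}$ one has $\varepsilon:=v(t)^{p}-v(t^{p})\in p^{3}\mathbb{Z}[\![t]\!]$, and the Taylor expansion
\[
\frac{F(v(t)^{p})}{F(v(t^{p}))}=\sum_{n\ge0}\frac{\varepsilon^{n}}{n!}\cdot\frac{F^{(n)}(v(t^{p}))}{F(v(t^{p}))}
\]
has $n$-th term of $p$-adic valuation at least $3n-\mathrm{ord}_{p}(n!)$; this is $\ge s+1$ once $n\ge 3$ (for $p=2$) or $n\ge 2$ (for $p=3$), so only two (resp.\ one) correction terms survive, each being controlled by the derivative congruence $F^{(n)}(w)/F(w)\equiv F^{(n)}_{s+1}(w)/F_{s+1}(w)\pmod{p^{s+1}}$ of Lemma~\ref{lem:Dwork23} together with $v(t^{p})\equiv t^{p}+432t^{2p}$. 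Multiplying the two factors and reducing modulo $p^{s+1}$ — a finite polynomial manipulation — yields \eqref{eq:U28} and \eqref{eq:U35}.

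For $V(t)$ I would use $V=\bigl(1+6t\tfrac{d}{dt}\bigr)U$, so that
\[
\frac{V(t)}{U(t^{p})}=\left(1+6t\,\frac{U'(t)}{U(t)}\right)\frac{U(t)}{U(t^{p})},\qquad \frac{U'(t)}{U(t)}=2\,\frac{F'(v(t))}{F(v(t))}\,v'(t),
\]
where $F'(v(t))/F(v(t))\equiv F'_{s+1}(v(t))/F_{s+1}(v(t))\pmod{p^{s+1}}$ by Lemma~\ref{lem:Dwork23} and $v'(t)\equiv 1+864t$; combining this with the already-computed expression for $U(t)/U(t^{p})$ gives \eqref{eq:V28} and \eqref{eq:V35}. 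The main obstacle is exactly this non-commutation of the quadratic substitution $z\mapsto v(t)$ with the Frobenius substitution, which has no analogue in the cases $p\ge 5$ treated in Proposition~\ref{prop:UV57} (where $U$ is itself a hypergeometric series in $1728t$): here one is forced to pass through the auxiliary series $F$, and the $p$-adic valuation estimate above is what confines the unavoidable correction to finitely many — in fact very few — terms, after which the statement reduces to explicit, if somewhat laborious, arithmetic with truncated hypergeometric polynomials modulo $2^{8}$ and $3^{5}$.
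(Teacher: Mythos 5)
Your proposal is correct and follows essentially the same route as the paper: write $U(t)=F(v(t))^{2}$ via the quadratic transformation (your $v(t)$ is exactly the paper's substitution variable $z$ with $t=z(1-432z)$), push the Dwork-type congruences of Lemma~\ref{lem:Dwork23} through this substitution, and obtain the $V$-congruences from $V=(1+6t\tfrac{d}{dt})U$ together with $U'/U=2\,F'(v)v'/F(v)$, exactly as in the paper's proof. The one place where you genuinely diverge is the treatment of the Frobenius-mismatch factor $F(v(t)^{p})/F(v(t^{p}))$ (equivalently, the comparison of $F(z^{p})^{2}$ with $U(t^{p})$): the paper evaluates it by an explicit series manipulation — for $p=2$ it shows $F(z^{2})^{2}\equiv(1+128z^{4})\,U(t^{2})\pmod{2^{8}}$ using the ${}_{3}F_{2}$-expansion, a logarithmic-derivative identity and $u_{r}\equiv0\pmod{2^{3}}$ — whereas you expand $F$ around $v(t^{p})$ in powers of $\varepsilon=v(t)^{p}-v(t^{p})$ and discard all but the terms $n\le2$ (for $p=2$) resp.\ $n\le1$ (for $p=3$) via the valuation bound $3n-\mathrm{ord}_{p}(n!)\ge s+1$, evaluating the survivors with the derivative congruence of Lemma~\ref{lem:Dwork23}. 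Your estimates check out ($v(t)\equiv t\pmod{p^{3}}$ since $432=2^{4}3^{3}$, hence $\varepsilon\in p^{3}\mathbb{Z}[\![t]\!]$, and $F^{(n)}/F\in\mathbb{Z}_{p}[\![z]\!]$), so both treatments are valid; yours is the more systematic one, while the paper's gets the correction factor in closed form with less machinery. One immaterial quibble: your parenthetical claim that most coefficients $A(m)$ of the truncations are divisible by $p^{s+1}$ is not accurate (e.g.\ $A(1)=60$), but nothing in the argument depends on it — in either version the remainder of the proof is the same finite computation with $F_{s+1}$ modulo $2^{8}$ and $3^{5}$.
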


\begin{proof}
By putting $t=z(1-432z)$ and using the quadratic transformation \eqref{eq:quadratictransf}, we have
\begin{align*}
U(t) = {}_{2}F_{1} \left( \frac{1}{12}, \frac{5}{12} ; 1 ; 1728t \right)^{2} = {}_{2}F_{1} \left( \frac{1}{6}, \frac{5}{6} ; 1 ; 432z \right)^{2} = F(z)^{2} .
\end{align*}
and then
\begin{align*}
F(z^{2})^{2} &= {}_{2}F_{1} \left( \frac{1}{6}, \frac{5}{6} ; 1 ; 432z^{2} \right)^{2} = {}_{2}F_{1} \left( \frac{1}{12}, \frac{5}{12} ; 1 ; 1728z^{2}(1-432z^{2}) \right)^{2} \\
&= {}_{3}F_{2} \left( \frac{1}{6}, \frac{1}{2}, \frac{5}{6} ; 1, 1 ; 1728z^{2}(1-432z^{2})  \right) = \sum_{r=0}^{\infty} u_{r} z^{2r} (1-432z^{2})^{r} \\
&\equiv  \sum_{r=0}^{\infty} u_{r} z^{2r} (1+80r z^{2}) \pmod{2^{8}}, \\
U(t^{2}) &\equiv {}_{3}F_{2} \left( \frac{1}{6}, \frac{1}{2}, \frac{5}{6} ; 1, 1 ; 1728z^{2}(1+160z)  \right) \equiv \sum_{r=0}^{\infty} u_{r} z^{2r} (1+160 z )^{r} \\
&\equiv  \sum_{r=0}^{\infty} u_{r} z^{2r} (1+160r z) \pmod{2^{8}} .
\end{align*}
To explicitly calculate the difference between these formal power series, we focus on the following congruence formula. By changing the variable $z^{2}=x$, we have
\begin{align*}
\frac{2z \tfrac{d}{d z} F(z^{2})}{F(z^{2})} &= 4x \frac{\tfrac{d}{d x} F(x)}{F(x)} \equiv 4x \frac{\tfrac{d}{d x} F_{8}(x)}{F_{8}(x)} \equiv 240 x+224 x^{2}+192 x^{4}+128 x^{8} \\
&\equiv 240 z^{2}+224 z^{4}+192 z^{8}+128 z^{16} \pmod{2^{8}}
\end{align*}
and then
\begin{align*}
&{} z \frac{d}{dz}F(z^{2})^{2} = F(z^{2})^{2} \cdot \frac{2z \tfrac{d}{d z} F(z^{2})}{F(z^{2})} \\
&\equiv (240 z^{2}+224 z^{4}+192 z^{8}+128 z^{16})  F(z^{2})^{2} \pmod{2^{8}}.
\end{align*}
On the other hand, from the power series expansion of $F(z^{2})^{2}$ calculated above,
\begin{align*}
z \frac{d}{dz}F(z^{2})^{2} &\equiv z \frac{d}{dz} \sum_{r=0}^{\infty} u_{r} z^{2r} (1+80r z^{2})  \\
&\equiv \sum_{r=0}^{\infty} 2r u_{r} z^{2r} + 160 \sum_{r=0}^{\infty} r(r+1) u_{r} z^{2r+2} \pmod{2^{8}}.
\end{align*}
Since we know that $u_{r} \equiv 0 \pmod{2^{3}}$ as mentioned in Remark \ref{rem:ur0mod8}, $160u_{r} \equiv 0 \pmod{2^{8}}$ holds for $r\ge0$, that is, the  second term of the above equation is congruent to $0$ modulo $2^{8}$. Therefore,
\begin{align*}
F(z^{2})^{2} - U(t^{2}) &\equiv 40z(z-2) \sum_{r=0}^{\infty} 2r u_{r} z^{r} \\
&\equiv 40z(z-2) \cdot z \frac{d}{dz}F(z^{2})^{2}
\equiv 128z^{4} F(z^{2})^{2} \pmod{2^{8}}.
\end{align*}
Hence we have $(1-128z^{4}) F(z^{2})^{2} \equiv U(t^{2}) \pmod{2^{8}}$ and so  $F(z^{2})^{2} \equiv (1+128z^{4}) U(t^{2}) \pmod{2^{8}}$. 
By combining these congruences, we obtain the congruence formula \eqref{eq:U28} for $U(t)$:
\begin{align*}
\frac{U(t)}{U(t^{2})} &= \frac{F(z^{2})^{2}}{U(t^{2})} \cdot \left( \frac{F(z)}{F(z^{2})} \right)^{2} \equiv (1+128z^{4}) \left( \frac{F_{8}(z)}{F_{7}(z^{2})} \right)^{2} \\
&\equiv 1+120z+224z^{2}+128z^{3} \equiv 1+120t+96t^{2}+128t^{3} \pmod{2^{8}} .
\end{align*}
Note that the last equality uses the congruence relation $z \equiv t(1+176t) \pmod{2^{8}}$. 
To obtain the formula \eqref{eq:V28}, we calculate the change of the variable $t=z(1-432z)$ with the following:
\begin{align*}
\frac{\tfrac{d}{d t} U(t)}{U(t)} &\equiv \frac{(1+96z) \tfrac{d}{dz} F(z)^{2}}{F(z)^{2}} \equiv 2(1+96z) \frac{\tfrac{d}{dz} F(z)}{F(z)} \equiv 2(1+96z) \frac{\tfrac{d}{dz} F_{8}(z)}{F_{8}(z)} \\
&\equiv 120+112 z+128 z^{2}+224 z^{3}+192 z^{7}+128 z^{15} \\
&\equiv 120 + 112 t + 128 t^{2} + 224 t^{3} + 192 t^{7} + 128 t^{15} \pmod{2^{8}}.
\end{align*}
Hence we have
\begin{align*}
\frac{V(t)}{U(t^{2})} &= \frac{1}{U(t^{2})}\left( 1+6 t \frac{d}{d t} \right) U(t) = \left( 1 +6 t \frac{\frac{d}{d t} U(t)}{U(t)} \right) \frac{U(t)}{U(t^{2})} \\
&\equiv 1+72 t+128 t^{2}+128 t^{3}+64 t^{4}+128 t^{8} \pmod{2^{8}}.
\end{align*}
Since equations \eqref{eq:U35} and \eqref{eq:V35} can be obtained by the similar calculation with $p=3$ and $s=4$ in Lemma \ref{lem:Dwork23}, we omit the proof. 
\end{proof}

From Propositions \ref{prop:UV57} and \ref{prop:UV23}, we obtain the following corollary.
\begin{corollary} The following formal infinite product expressions hold.
\begin{align*}
U(t) &\equiv \prod_{k=0}^{\infty} (1+120t^{2^{k}} +96t^{2^{k+1}} +128t^{3\cdot 2^{k}}) \pmod{2^{8}} , \\
U(t) &\equiv \prod_{k=0}^{\infty}  \genfrac{(}{)}{0pt}{}{1+120 t^{3^{k}}+54 t^{2\cdot 3^{k}}+189 t^{3^{k+1}}+135 t^{4 \cdot 3^{k}} }{+81 t^{5 \cdot 3^{k}} +162 t^{6 \cdot 3^{k}} +81 t^{7 \cdot 3^{k} } + 162 t^{10 \cdot 3^{k}} } \pmod{3^{5}}, \\
U(t) &\equiv \prod_{k=0}^{\infty} (1+20t^{5^{k}} +10t^{2\cdot5^{k}})  \pmod{5^{2}}, \\
U(t) &\equiv \prod_{k=0}^{\infty} \frac{(1+22t^{7^{k}} +7t^{2\cdot 7^{k}} +21t^{3 \cdot 7^{k}} + t^{7^{k+1}} +36t^{8\cdot 7^{k}} )}{(1+t^{7^{k+1}})} \pmod{7^{2}} .
\end{align*}
\end{corollary}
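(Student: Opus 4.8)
The plan is to obtain the corollary simply by iterating the ``quasi-functional equations'' established in Propositions \ref{prop:UV57} and \ref{prop:UV23} and then passing to a $t$-adic limit. First I would isolate the abstract mechanism: suppose $R(t)\in(\mathbb{Z}/p^{s}\mathbb{Z})[\![t]\!]$ (allowing $R$ to be a rational function whose denominator has unit constant term, as happens for $p=7$, interpreted as the corresponding power series) satisfies $R(0)=1$ and $U(t)\equiv R(t)\,U(t^{p})\pmod{p^{s}}$. Substituting $t\mapsto t^{p},t^{p^{2}},\dots$ (which preserves congruences modulo $p^{s}$) and combining the resulting congruences gives, for every $N\ge1$,
\begin{align*}
U(t)\equiv\Bigl(\prod_{k=0}^{N-1}R(t^{p^{k}})\Bigr)U(t^{p^{N}})\pmod{p^{s}}.
\end{align*}

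Next I would make sense of the infinite product and let $N\to\infty$. Since $R(0)=1$, we have $R(t^{p^{k}})\equiv1\pmod{t^{p^{k}}}$, so for each fixed $n$ only the finitely many factors with $p^{k}\le n$ affect the coefficient of $t^{n}$; hence the partial products stabilise coefficientwise and $\prod_{k=0}^{\infty}R(t^{p^{k}})$ is a well-defined element of $(\mathbb{Z}/p^{s}\mathbb{Z})[\![t]\!]$. For the same reason $U(t^{p^{N}})\equiv1\pmod{t^{p^{N}}}$, so once $p^{N}>n$ the factor $U(t^{p^{N}})$ contributes only its constant term to the coefficient of $t^{n}$ on the right-hand side of the displayed identity; comparing that coefficient for such $N$ yields
\begin{align*}
U(t)\equiv\prod_{k=0}^{\infty}R(t^{p^{k}})\pmod{p^{s}}.
\end{align*}

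Finally I would feed in the four explicit congruences already proved: from \eqref{eq:U28} and \eqref{eq:U35}, $(p,s)=(2,8)$ with $R(t)=1+120t+96t^{2}+128t^{3}$ and $(p,s)=(3,5)$ with $R(t)=1+120t+54t^{2}+189t^{3}+135t^{4}+81t^{5}+162t^{6}+81t^{7}+162t^{10}$; and from Proposition \ref{prop:UV57}, $(p,s)=(5,2)$ with $R(t)=1+20t+10t^{2}$ (this is \eqref{eq:U52}) and $(p,s)=(7,2)$ with $R(t)=(1+22t+7t^{2}+21t^{3}+t^{7}+36t^{8})/(1+t^{7})$. Replacing $t$ by $t^{p^{k}}$ in each $R$ and collecting the factors reproduces verbatim the four product expansions in the statement; for $p=7$ one notes in addition that $1+t^{7}$ is a unit in $(\mathbb{Z}/49\mathbb{Z})[\![t]\!]$ (unit constant term), so $R(t^{7^{k}})$ is the legitimate power series appearing there. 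The only step requiring care — the ``main obstacle'', such as it is — is the $t$-adic convergence and limit argument of the middle paragraph; once that is spelled out there is no arithmetic content beyond Propositions \ref{prop:UV57} and \ref{prop:UV23}.
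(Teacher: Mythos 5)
Your proposal is correct and is exactly the argument the paper intends: the corollary is stated as an immediate consequence of Propositions \ref{prop:UV57} and \ref{prop:UV23}, obtained by iterating $U(t)\equiv R(t)U(t^{p})\pmod{p^{s}}$ under $t\mapsto t^{p^{k}}$ and passing to the coefficientwise ($t$-adic) limit, with the $p=7$ denominator $1+t^{7^{k+1}}$ being a unit power series. Your write-up just makes explicit the convergence step the paper leaves implicit.
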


\begin{question}
Recall that $U(j(\tau)^{-1}) = E_{4}(\tau)^{1/2}$ and $V(j(\tau)^{-1}) = E_{2}(\tau) E_{4}(\tau)^{3/2} E_{6}(\tau)^{-1}$. What are the counterparts of the above congruence formulas in the (quasi) modular form side? 
\end{question}

We still cannot answer this vague question, but we can prove the following assertion as a closely related result. (Recall that $U(t)= \mathcal{F}_{1}(t)^{2}$.)
\begin{proposition}\label{prop:infiniteproductF1}
Let $\mu(n)$ be the M\"{o}bius function and $(\cdot,\cdot)$ be the Atkin inner product defined as
\begin{align*}
(f, g) = \text{constant term of $f g E_{2}$ as a Laurent series in $q$.} \quad ( f(\tau),g(\tau) \in \mathbb{C}[j(\tau)] ).
\end{align*}
Then the formal power series $\mathcal{F}_{1}(t)$ can be expressed formally as follows:
\begin{align}
\mathcal{F}_{1}(t) &= {}_{2}F_{1} \left( \frac{1}{12}, \frac{5}{12} ; 1 ; 1728 t   \right) = \prod_{n=1}^{\infty} (1-t^{n})^{-c(n)} , \quad c(n) = \frac{1}{12n} \sum_{d|n} \mu (\tfrac{n}{d})\, (j^{d},1) , \label{eq:F1infiniteprod} \\
\mathcal{F}_{1}(t) &= \exp \left( \sum_{m=1}^{\infty} \frac{1}{12}(j^{m},1) \, \frac{t^{m}}{m} \right). \label{eq:F1exp}
\end{align}
\end{proposition}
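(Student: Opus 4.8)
The plan is to reduce both identities to one statement about the coefficients of the power series $V(t)/U(t)$, and then to prove that statement by Cauchy's coefficient formula together with the change of variable $t=1/j(\tau)$.

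First, \eqref{eq:F1infiniteprod} and \eqref{eq:F1exp} are equivalent by a purely formal computation: applying $-\log$ to the product gives $\sum_{n\ge1}c(n)\sum_{k\ge1}t^{nk}/k=\sum_{m\ge1}\bigl(\tfrac1m\sum_{d\mid m}d\,c(d)\bigr)t^{m}$, and the defining relation $n\,c(n)=\tfrac1{12}\sum_{d\mid n}\mu(n/d)(j^{d},1)$ is, by M\"obius inversion, equivalent to $\sum_{d\mid m}d\,c(d)=\tfrac1{12}(j^{m},1)$ for every $m\ge1$, which turns the product into $\exp\bigl(\sum_{m\ge1}\tfrac1{12}(j^{m},1)\,t^{m}/m\bigr)$. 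Hence it suffices to prove \eqref{eq:F1exp}. Differentiating \eqref{eq:F1exp}, and using $U(t)=\mathcal{F}_{1}(t)^{2}$ (by \eqref{eq:U(t)}) together with $V(t)=\bigl(1+6t\tfrac{d}{dt}\bigr)U(t)$ from \eqref{eq:V(t)}, one gets $V(t)/U(t)=1+12\,t\tfrac{d}{dt}\log\mathcal{F}_{1}(t)$; so \eqref{eq:F1exp} is equivalent to the coefficientwise identity $V(t)/U(t)=1+\sum_{m\ge1}(j^{m},1)\,t^{m}$. By Proposition~\ref{prop:EisensteinHyp} we have $U(1/j)=E_{4}^{1/2}$ and $V(1/j)=E_{2}E_{4}^{3/2}/E_{6}$, so $V(t)/U(t)$ is the expansion of the modular function $E_{2}E_{4}/E_{6}$ at the cusp in the local parameter $t=1/j(\tau)$, and what remains is
\begin{equation*}
\bigl[t^{m}\bigr]\bigl(E_{2}E_{4}/E_{6}\bigr)=(j^{m},1)\qquad(m\ge1),
\end{equation*}
the case $m=0$ being clear since $\bigl[t^{0}\bigr](E_{2}E_{4}/E_{6})=1$.

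To prove this I would apply Cauchy's formula $[t^{m}]\Psi=\frac1{2\pi i}\oint_{|t|=\varepsilon}\Psi(t)\,t^{-m-1}\,dt$ to $\Psi(t)=E_{2}E_{4}/E_{6}$ (a power series with positive radius of convergence, e.g.\ by writing $\Psi=\mathcal{F}_{2}/\bigl((1-1728t)^{1/2}\mathcal{F}_{1}\bigr)$ via Proposition~\ref{prop:EisensteinHyp}), and then substitute $t=1/j(\tau)$, which is a holomorphic coordinate near the cusp because $1/j=q-744q^{2}+\cdots$. Under this substitution $\Psi(t)$ becomes the modular function $E_{2}(\tau)E_{4}(\tau)/E_{6}(\tau)$, $t^{-m-1}$ becomes $j(\tau)^{m+1}$, and $dt=-\bigl(j'(\tau)/j(\tau)^{2}\bigr)\,d\tau=2\pi i\,\bigl(E_{6}(\tau)/(j(\tau)E_{4}(\tau))\bigr)\,d\tau$, since $j'(\tau)=2\pi i\,D(j)=-2\pi i\,jE_{6}/E_{4}$ by \eqref{eq:DEisen}. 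Multiplying these three factors, the integrand collapses to $2\pi i\,j(\tau)^{m}E_{2}(\tau)\,d\tau$, so $[t^{m}]\Psi=\oint j(\tau)^{m}E_{2}(\tau)\,d\tau$ around a small positively oriented loop about the cusp, which is exactly the constant term of the $q$-expansion of $j^{m}E_{2}$, i.e.\ $(j^{m},1)$. Feeding this back into the equivalence of the previous paragraph (the constant of integration being fixed by $\mathcal{F}_{1}(0)=1$) yields \eqref{eq:F1exp}, and hence \eqref{eq:F1infiniteprod}.

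The only step that needs care, and the one I would write out in detail, is the change of variable: one must choose $\varepsilon$ small enough that $t\mapsto\tau$, $t=1/j(\tau)$, is a biholomorphism of $\{|t|<\varepsilon\}$ onto a punctured neighbourhood of the cusp on which the hypergeometric identities of Proposition~\ref{prop:EisensteinHyp} are valid and $E_{6}$ is nonvanishing, and one must check that the positively oriented circle $|t|=\varepsilon$ pulls back to a positively oriented loop around $q=0$, so that $\oint j^{m}E_{2}\,d\tau$ really computes the $q^{0}$-coefficient of $j^{m}E_{2}$. Everything else is a routine manipulation of formal power series.
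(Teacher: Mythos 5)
Your proposal is correct, and it takes a route that differs from the paper's in one substantive point. The paper's proof starts from the moment-generating-function identity \eqref{eq:moment-genfunc}, $E_{2}E_{4}/(jE_{6})=\sum_{m\ge0}(j^{m},1)j^{-m-1}$, which it simply quotes from Kaneko--Zagier \cite[\S 5]{kaneko1998supersingular}, and then identifies the left-hand side with $1+12t\frac{d}{dt}\log\mathcal{F}_{1}(t)$ by the Euler transformation \eqref{eq:Euler} and the contiguous relation \eqref{eq:dHyp}, finishing with the same $\log$/M\"{o}bius bookkeeping you use. You do two things differently: (i) you reach the logarithmic derivative through $U=\mathcal{F}_{1}^{2}$ and $V=(1+6t\frac{d}{dt})U$, so that the key identity becomes $V(t)/U(t)=1+\sum_{m\ge1}(j^{m},1)t^{m}$ --- this is equivalent to the paper's step, since $V/U=(1-1728t)^{-1/2}\mathcal{F}_{2}/\mathcal{F}_{1}$ and your use of $V(1/j)=E_{2}E_{4}^{3/2}/E_{6}$ rests on \eqref{eq:E2Hyp3F2}; and (ii) rather than citing \eqref{eq:moment-genfunc}, you prove it from the definition of the Atkin inner product by Cauchy's formula and the substitution $t=1/j(\tau)$, using $D(j)=-jE_{6}/E_{4}$ so that $\Psi(t)t^{-m-1}dt$ collapses to $2\pi i\,j^{m}E_{2}\,d\tau$. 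That computation is sound (your stated caveats about the biholomorphism $t=1/j$ near the cusp and the orientation of the pulled-back contour are the right ones and are routine), so your argument buys a self-contained derivation of the generating-function identity at the cost of an analytic contour argument; the paper's proof is shorter because it treats that identity as known. If you want to eliminate the analysis entirely, note that the constant term of $j^{m}E_{2}$ in $q$ is the formal residue of $j^{m}E_{2}\,dq/q$, and formal residues of differentials are invariant under the formal change of coordinate $t=1/j=q(1+O(q))$; rewriting $dq/q=\bigl(jE_{4}/E_{6}\bigr)dt$ gives $j^{m}E_{2}\,dq/q=t^{-m-1}\Psi(t)\,dt$, whose residue is $[t^{m}]\Psi$, which is exactly your identity without any convergence or orientation check.
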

Here are some examples of exponents $c(n)$.
\[ \{c(n) \}_{n\ge1} = \{60,37950,36139180,40792523310,50608476466548, \cdots \}. \]

\begin{proof}
As already mentioned in \cite[\S 5]{kaneko1998supersingular}, the moment-generating function of the Atkin inner product is given by
\begin{align}
\frac{E_{2}(\tau) E_{4}(\tau)}{j(\tau) E_{6}(\tau)} = \sum_{m=0}^{\infty} \frac{(j^{m},1)}{j(\tau)^{m+1}} = \frac{1}{j(\tau)} + \frac{720}{j(\tau)^{2}} + \frac{911520}{j(\tau)^{3}} + \frac{1301011200}{j(\tau)^{4}} + \cdots . \label{eq:moment-genfunc}
\end{align}
By transforming the left-hand side of the above equation using the hypergeometric expressions of the Eisenstein series in Proposition \ref{prop:EisensteinHyp}, we have
\begin{align*}
\sum_{m=0}^{\infty} (j^{m},1) \, t^{m} &=  (1-1728t)^{-1/2} \cdot \frac{{}_{2}F_{1} \left( -\frac{1}{12}, \frac{7}{12} ; 1 ; 1728 t   \right)}{{}_{2}F_{1} \left( \frac{1}{12}, \frac{5}{12} ; 1 ; 1728 t  \right)} = \frac{{}_{2}F_{1} \left( \frac{13}{12}, \frac{5}{12} ; 1 ; 1728 t   \right)}{{}_{2}F_{1} \left( \frac{1}{12}, \frac{5}{12} ; 1 ; 1728 t   \right)}  \\
&= \left\{ \left( 1+12t\frac{d}{d t} \right) {}_{2}F_{1} \left( \frac{1}{12}, \frac{5}{12} ; 1 ; 1728 t  \right) \right\} {}_{2}F_{1} \left( \frac{1}{12}, \frac{5}{12} ; 1 ; 1728 t   \right)^{-1} \\
&= 1 + 12 t \frac{d}{d t} \log {}_{2}F_{1} \left( \frac{1}{12}, \frac{5}{12} ; 1 ; 1728 t   \right) \\
&= 1 + 12 t \frac{d}{d t} \log \prod_{n=1}^{\infty} (1-t^{n})^{-c(n)} \\
&= 1 +12 \sum_{n=1}^{\infty} c(n) \frac{n t^{n}}{1-t^{n}} .
\end{align*}
Therefore, we have $(j^{m},1)= 12\sum_{d|m}d \cdot c(d)$ by comparing the coefficients of $t^{m}$ on both sides of the above equation, and obtain the desired expression \eqref{eq:F1infiniteprod} of $c(n)$ by using the M\"{o}bius inversion formula. Equation \eqref{eq:F1exp} is obtained by dividing both sides of the equation
\begin{align*}
t \frac{d}{d t} \log {}_{2}F_{1} \left( \frac{1}{12}, \frac{5}{12} ; 1 ; 1728 t   \right) = \sum_{m=1}^{\infty} \frac{1}{12}(j^{m},1)\, t^{m}
\end{align*}
by $t$ and then integrating with respect to $t$.
\end{proof}

\begin{remark}
Equation \eqref{eq:moment-genfunc}, and therefore equation \eqref{eq:F1exp}, is equivalent to:
\begin{align*}
\Delta (\tau ) = j(\tau)^{-1} \exp \left( \sum_{m=1}^{\infty} (j^{m},1) \, \frac{j(\tau )^{-m}}{m} \right). 
\end{align*}
\end{remark}


\section{Proof of the main theorem} \label{sec:prfmainthm}
First, we summarize the normalizing factors $N_{m,a}$ and the polynomials $A_{m,a}(X)$ and $B_{m,a}(X)$ that appear in the main theorem. 
From the definition of the normalizing factor in Section \ref{sec:intro}, we see  that its prime factors do not exceed the weight of the corresponding normalized extremal quasimodular form.

\textbf{$\bullet$ Case of weight $w \in \mathcal{S}_{0} = \{ 12,24 \}$.} \\
Prime factorization of normalizing factors.
\begin{align*}
N_{1,0} &= 2^5\cdot 3^3\cdot 5\cdot 7\cdot 11 , \\
N_{2,0} &= 2^6\cdot 3^3\cdot 5\cdot 7\cdot 11\cdot 13\cdot 17\cdot 19\cdot 23 .
\end{align*}
Atkin-like polynomials.
\begin{align*}
A_{1,0}(X) = 1 , \; A_{2,0}(X) = X-824 .
\end{align*}
Adjoint polynomials.
\begin{align*}
B_{1,0}(X) = X-1008, \; B_{2,0}(X) = X^2-1832 X+497952 .
\end{align*}

\textbf{$\bullet$ Case of weight $w \in \mathcal{S}_{2} = \{ 2,14,38 \}$.} \\
Prime factorization of normalizing factors.
\begin{align*}
N_{0,2} &= 1 , \\
N_{1,2} &= 2^5\cdot 3^3\cdot 5\cdot 7\cdot 13 , \\
5N_{3,2} &= 2^7\cdot 3^4\cdot 5^2\cdot 7\cdot 11\cdot 13\cdot 17\cdot 19\cdot 23\cdot 29\cdot 31\cdot 37  .
\end{align*}
(Original) Atkin polynomials.
\begin{align*}
A_{0,2}(X) &= 1, \; A_{1,2}(X) = X-720, \\
A_{3,2}(X) &= X^3-\frac{12576 
}{5} X^2 +1526958 X-107765856 .
\end{align*}
Adjoint polynomials.
\begin{align*}
B_{0,2}(X) = 0, \; B_{1,2}(X) = 1, \; B_{3,2}(X) = X^2-\frac{8976}{5} X +627534 .
\end{align*}

\textbf{$\bullet$ Case of weight $w \in \mathcal{S}_{6} = \{6,18,30,54,114\}$.} \\
Prime factorization of normalizing factors.
\begin{align*}
N_{0,6} &= 2^4\cdot 3^2\cdot 5 , \\
N_{1,6} &= 2^6\cdot 3^3\cdot 5\cdot 7\cdot 11\cdot 13\cdot 17 , \\
N_{2,6} &= 2^6\cdot 3^4\cdot 5^2\cdot 7\cdot 11\cdot 13\cdot 17\cdot 19\cdot 23\cdot 29 , \\
2 N_{4,6} &= 2^7\cdot 3^4\cdot 5^2\cdot 7^2\cdot 11\cdot 13\cdot 17\cdot 19\cdot 23\cdot 29\cdot 31\cdot 37\cdot 41\cdot 43\cdot 47\cdot 53 , \\
51N_{9,6} &= 2^8\cdot 3^5\cdot 5^2\cdot 7^2\cdot 11\cdot 13\cdot 17\cdot 19\cdot 23\cdot 29\cdot 31\cdot 37\cdot 41\cdot 43\cdot 47\cdot 53\cdot 59 \\
&{} \quad \cdot 61\cdot 67\cdot 71\cdot 73\cdot 79\cdot 83\cdot 89\cdot 97\cdot 101\cdot 103\cdot 107\cdot 109\cdot 113 .
\end{align*}
Atkin-like polynomials.
\begin{align*}
A_{0,6}(X) &= 1, \quad A_{1,6}(X) = X-1266 , \quad A_{2,6}(X) = X^2-2115 X+870630 , \\
A_{4,6}(X) &= X^4-\frac{7671}{2} X^3 +4871313 X^2-2260803660 X+273189722310, \\
A_{9,6}(X) &= X^9-\frac{24454}{3} X^{8}+\frac{474979296}{17} X^{7} -\frac{888804457205}{17} X^6  \\
&{} \quad +58002865348421 X^5 -38759471954111394 X^4 \\
&{} \quad +15135088185868167792 X^3-3173598010686486090312 X^2 \\
&{} \quad +297473555337690122052390 X-7840346480159903987708940 .
\end{align*}
Adjoint polynomials.
\begin{align*}
B_{0,6}(X) &= 1, \quad B_{1,6}(X) = X-546, \quad B_{2,6}(X) = X^2-1395 X+259350 , \\
B_{4,6}(X) &= X^4-\frac{6231}{2} X^3 +3021273 X^2-948582060 X+53723885670 , \\
B_{9,6}(X) &= X^9-\frac{22294}{3} X^8 +\frac{390702816}{17}  X^7 -\frac{651013930805 }{17} X^6  \\
&{} \quad +37180279576181 X^5 -21228003877921074 X^4 \\
&{} \quad +6835398004395374832 X^3-1114698418843177975752 X^2 \\
&{} \quad +72322444486635699257190 X-919318930586739576036780 .
\end{align*}

\textbf{$\bullet$ Case of weight $w \in \mathcal{S}_{8} = \{8,20,32,68,80\}$.} \\
Prime factorization of normalizing factors.
\begin{align*}
N_{0,8} &= 2^4\cdot 3^2\cdot 7 , \\
N_{1,8} &= 2^6\cdot 3^3\cdot 5\cdot 7\cdot 11\cdot 13\cdot 19 , \\
N_{2,8} &= 2^6\cdot 3^4\cdot 5^2\cdot 7\cdot 11\cdot 13\cdot 17\cdot 19\cdot 23\cdot 31 , \\
5N_{5,8} &= 2^8\cdot 3^4\cdot 5^2\cdot 7^2\cdot 11\cdot 13\cdot 17\cdot 19\cdot 23\cdot 29\cdot 31\cdot 37\cdot 41\cdot 43\cdot 47 \cdot 53\cdot 59 \\
&{} \quad \cdot 61\cdot 67 , \\
11N_{6,8} &= 2^8\cdot 3^4\cdot 5^2\cdot 7^2\cdot 11\cdot 13\cdot 17\cdot 19\cdot 23\cdot 29\cdot 31\cdot 37\cdot 41\cdot 43\cdot 47\cdot 53\cdot 59 \\
&{} \quad \cdot 61\cdot 67\cdot 71\cdot 73\cdot 79 .
\end{align*}
Atkin-like polynomials.
\begin{align*}
A_{0,8}(X) &= 1, \quad A_{1,8}(X) = X-330, \quad A_{2,8}(X) = X^2-1215 X+129030, \\
A_{5,8}(X) &= X^5-\frac{19098}{5} X^4 +\frac{25015408}{5} X^3 -\frac{12959037322 }{5} X^2 +441761976414 X \\
&{} \quad -9018997829292  , \\
A_{6,8}(X) &= X^6-4685 X^5+\frac{89349390}{11}  X^4 -6372443376 X^3+2195718854056 X^2 \\
&{} \quad -261120476348550 X+3783879543834780 .
\end{align*}
Adjoint polynomials.
\begin{align*}
B_{0,8}(X) &= 1, \quad B_{1,8}(X) = X-1338, \quad B_{2,8}(X) = X^2-2223 X+1021110 , \\
B_{5,8}(X) &= X^5-\frac{24138}{5}  X^4 +\frac{42602992}{5} X^3 -\frac{33192286666 }{5} X^2 +\frac{10734540754806}{5} X \\
&{} \quad -202399435400844 , \\
B_{6,8}(X) &= X^6-5693 X^5+\frac{137637630}{11}  X^4 -\frac{146033508816}{11}  X^3 +6911247661864 X^2 \\
&{} \quad -1568906774156358 X+105994437115386300 .
\end{align*}

\begin{proof}[Proof of Theorem \ref{thm:mainthm}]
We prove only the non-trivial and highest weight case $w=114 \in \mathcal{S}_{6}$. 
The remaining cases can be proved in a similar way. 

Since $G_{114}^{(1)}(\tau) \in \mathbb{Z}[\![q]\!] \Leftrightarrow P_{19}(t) \in \mathbb{Z}[\![t]\!]$ holds from \eqref{claim:GPQ} of Lemma \ref{lem:integrality}, we transform the formal power series $P_{19}(t)$ according to \eqref{eq:P2m1AVBU} as follows:
\begin{align*}
N_{9,6}\, t^{19}(1-1728t)^{-1/2} P_{19}(t) = \widetilde{A_{9,6}}(t) V(t) - \widetilde{B_{9,6}}(t) U(t).
\end{align*}
Since the polynomials $A_{9,6}(X)$ and $B_{9,6}(X)$ belong to $\frac{1}{51}\mathbb{Z}[X]$, we multiply the both sides of the above equation by $51$, so that the right-hand side is the power series with integral coefficients. 
Using Proposition \ref{prop:UVLucas}, we obtain the following congruence formula for prime numbers $p\ge11$: 
\begin{align*}
&{} 51 \left( \widetilde{A_{9,6}}(t) V(t) - \widetilde{B_{9,6}}(t) U(t) \right) \\
&\equiv \left\{ 51 \widetilde{A_{9,6}}(t)  \sum_{m=0}^{[p/6]} (6m+1) u_{m} t^{m} -  51 \widetilde{B_{9,6}}(t)  \sum_{m=0}^{[p/6]}u_{m} t^{m} \right\} U(t^{p})  \pmod{p}.
\end{align*}
By direct calculation using \texttt{Mathematica}, we can see that the polynomial part of the right-hand side of the above equation is congruent to 0 modulo $p \; (11\le p \le 113)$. 
To perform similar calculations for modulo $2^{8}, 3^{5}, 5^{2}$, and $7^{2}$, we use Propositions \ref{prop:UV57} and \ref{prop:UV23}. For example, from \eqref{eq:U28} and \eqref{eq:V28}, we have
\begin{align*}
&{} 51 \left( \widetilde{A_{9,6}}(t) V(t) - \widetilde{B_{9,6}}(t) U(t) \right) \\
&\equiv  \left\{ 51 \widetilde{A_{9,6}}(t) (1+72 t +128 t^{2} +128 t^{3} +64 t^{4} + 128 t^{8} ) \right. \\
&{} \quad \left. - 51 \widetilde{B_{9,6}}(t) (1+120 t +96 t^{2} +128 t^{3} ) \right\} U(t^2) \pmod{2^8}
\end{align*}
and then the polynomial part of the right-hand side is congruent to 0 modulo $2^{8}$. 
The same assertion holds for the remaining cases. 
Combining the results for each of these primes, we have the desired congruence 
\begin{align*}
&{} 51 N_{9,6}\, t^{19}(1-1728t)^{-1/2} P_{19}(t) \\
&= 51 \left( \widetilde{A_{9,6}}(t) V(t) - \widetilde{B_{9,6}}(t) U(t) \right) \equiv 0 \pmod{51 N_{9,6}}
\end{align*}
and hence $t^{19} (1-1728t)^{-1/2} P_{19}(t) \in \mathbb{Z}[\![t]\!]$, and so $P_{19}(t) \in \mathbb{Z}[\![t]\!]$ holds from \eqref{claim:f12} in Lemma \ref{lem:integrality}.
\end{proof}

\begin{question}
The proof of Theorem \ref{thm:mainthm} in this paper is a ``hypergeometric" proof. Can this theorem be proved using only the theory of modular forms?
\end{question}


\section{Other choice of the leading coefficients} \label{sec:moreint}
Why did we choose the leading coefficient as 1 in Definition \ref{def:exqmf} of the normalized extremal quasimodular form? Was it really a natural choice? Of course, the set of weights $w$ of $c\, G_{w}^{(r)}$ with integral Fourier coefficients changes if we choose a number $c$ other than 1 as the leading coefficient. 
In this section we show that $c\, G_{w}^{(1)}$ has integral Fourier coefficients if we choose a constant~$c$ based on the hypergeometric expressions of $G_{w}^{(1)}$ in Proposition \ref{prop:Gto2F1}.

The reason we often focus on (positive) integers is that we can expect them to count ``something'', such as the dimension or degree or order of some mathematical objects, the number of curves or points with certain arithmetic or geometric properties, and so on.
Interestingly, Nebe proved the integrality of the Fourier coefficients of $G_{14}^{(1)}$ by using the properties of the automorphism group of the Leech lattice $\Lambda_{24}$ in the appendix of \cite{pellarin2020extremal}\footnote{The author speculates that Nebe probably uses $q$ to mean $e^{\pi i \tau}$ in the proof of Theorem A.1. Hence the correct expression of $f_{1,14}$ for $q=e^{2 \pi i \tau}$ is given by $A^{-1} \sum_{a=1}^{\infty} \tfrac{a}{2} | L_{a} | q^{a/2}$.}:
\begin{align*}
\theta_{\Lambda_{24}} &= \sum_{\lambda \in \Lambda_{24}} e^{\pi i \tau \| \lambda \|^{2}} = \sum_{n=0}^{\infty} |\{ \lambda \in \Lambda_{24} \mid \|\lambda\|^{2}=n  \}| \,q^{n/2} \\
&= E_{4}^{3} -720 \Delta = E_{12} - \frac{65520}{691} \Delta  \\
&= 1+196560 q^{2}+16773120 q^{3}+398034000 q^{4}+4629381120 q^{5} +O(q^{6})    , \\
G_{14}^{(1)} &= \frac{E_{2} (E_{4}^{3} - 720 \Delta)  - E_{4}^{2}E_{6}}{393120} = \frac{D(\theta_{\Lambda_{24}})}{393120} \\
&= q^{2}+128 q^{3}+4050 q^{4}+58880 q^{5}+525300 q^{6} +O(q^{7}) .
\end{align*}
Therefore, the number 393120 is twice the number of lattice vectors of squared norm 4 in $\Lambda_{24}$, and $N_{1,2}$ is exactly equal to this number. 
It is known that the Eisenstein series $E_{4}$ is the theta series of the $E_{8}$-lattice. Since $G_{6}^{(1)} = D(E_{4})/240 = D(\theta_{E_{8}})/240$, as with $G_{14}^{(1)}$, the Fourier coefficients of $G_{6}^{(1)}$ are related to the number of lattice vectors in the $E_{8}$-lattice. 
Note that although $G_{6}^{(1)}$ and $G_{14}^{(1)}$ were obtained as derivatives of a certain modular form, the only such $G_{w}^{(1)}$ that can be obtained this way are for $w=6,8,10,14$. This fact is easily seen by comparing the dimension formulas of $M_{w}$ and $QM_{w+2}^{(1)}$. 

Inspired by these coincidences\footnote{The modular solutions of the Kaneko--Zagier equation \eqref{eq:2ndKZeqn} with small weights are also closely related to the theta series of the ADE-type root lattice. For more details, see \cite[p.~158]{kaneko2003modular}.} $\tfrac{1}{3}N_{0,6}\, G_{6}^{(1)}=D(\theta_{E_{8}}) \in \mathbb{Z}[\![q]\!]$ and $N_{1,2}\, G_{14}^{(1)}=D(\theta_{\Lambda_{24}}) \in \mathbb{Z}[\![q]\!]$, although we still do not know what the Fourier coefficients of these forms counting up, we have arrived at the following theorem.

\begin{theorem} \label{thm:intNG}
Let $N_{m,a}$ be the normalizing factor defined by \eqref{eq:d1nf02} and \eqref{eq:d1nf68}. Then, for any $m\ge0$, the Fourier coefficients of the following extremal quasimodular forms are all integers: 
\begin{align*}
\frac{N_{m,0}}{24m} \,G_{12m}^{(1)}, \; \frac{N_{m,2}}{24m} \,G_{12m+2}^{(1)}, \; \frac{N_{m,6}}{12(2m+1)} \,G_{12m+6}^{(1)} , \; \frac{N_{m,8}}{12(2m+1)} \,G_{12m+8}^{(1)} . 
\end{align*}
Furthermore, since $G_{6n+4}^{(1)}= E_{4} G_{6n}^{(1)}$, the forms $\tfrac{1}{24m} N_{m,0} \, G_{12m+4}^{(1)}$ and $\tfrac{1}{12(2m+1)} N_{m,6} \, G_{12m+10}^{(1)}$ have the same properties. (Note: Since the factors $N_{m,0}$ and $N_{m,2}$ are defined by the product of binomial coefficients \eqref{eq:d1nf02}, we substitute $m=0$ after dividing by $m$ and reducing.)
\end{theorem}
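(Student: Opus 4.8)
The plan is to reduce each $G^{(1)}_{w}$ in the statement, via Proposition \ref{prop:Gto2F1}, to a single Gauss series ${}_2F_1$, to collapse the four families into two one-parameter families of arithmetic claims, and then to prove those by a $p$-adic valuation count. Write $A(n)=\tfrac{(6n)!}{(3n)!\,(2n)!\,n!}=\binom{3n}{n}\binom{6n}{3n}$ for the integer sequence \eqref{eq:intseqAm}. A direct manipulation of \eqref{eq:d1nf02}--\eqref{eq:d1nf68}, using $\binom{6m}{2m}\binom{12m}{6m}=\tfrac{(12m)!}{(2m)!(4m)!(6m)!}=A(2m)$, gives $\tfrac{N_{m,0}}{24m}=A(2m)$, $\tfrac{N_{m,6}}{12(2m+1)}=A(2m+1)$, $\tfrac{N_{m,2}}{24m}=\tfrac{12m+1}{12m-1}A(2m)$ and $\tfrac{N_{m,8}}{12(2m+1)}=\tfrac{12m+7}{12m+5}A(2m+1)$; since $12m=6(2m)$, $12m+6=6(2m+1)$, and $\tfrac{12m+1}{12m-1}$, $\tfrac{12m+7}{12m+5}$ equal $\tfrac{6n+1}{6n-1}$ at $n=2m$ and $n=2m+1$, and since the proof of Lemma \ref{lem:integrality}\eqref{claim:GPQ} is unchanged with an arbitrary constant in place of $1$, the theorem becomes equivalent to the two statements: $A(n)P_{n}(t)\in\mathbb{Z}[\![t]\!]$ and $\tfrac{6n+1}{6n-1}A(n)Q_{n}(t)\in\mathbb{Z}[\![t]\!]$ for all $n\ge 0$. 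As $\mathcal{F}_{1}(t)\in 1+t\mathbb{Z}[\![t]\!]$ is a unit of $\mathbb{Z}[\![t]\!]$ by Remark \ref{rem:E4fourthroot} and $(1-1728t)^{\pm 1/2}\in\mathbb{Z}[\![t]\!]$ by the proof of Lemma \ref{lem:integrality}\eqref{claim:f12}, the factor $\mathcal{F}_{1}$ in \eqref{eq:Pnt}--\eqref{eq:Qnt} can be dropped, so it suffices to prove the integrality of the power series $A(n)\,{}_2F_1\big(\tfrac{6n+1}{12},\tfrac{6n+5}{12};n+1;1728t\big)$ and $\tfrac{6n+1}{6n-1}A(n)\,{}_2F_1\big(\tfrac{6n-1}{12},\tfrac{6n+7}{12};n+1;1728t\big)$.

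I would first observe that a weaker form is free. By \eqref{eq:dopKup}, $K^{\mathrm{up}}_{6n}=E_{4}D-\tfrac1{12}\big((6n-1)E_{2}E_{4}+(6n+1)E_{6}\big)$, and since $E_{2}\equiv E_{4}\equiv E_{6}\equiv 1\pmod{12}$ in $\mathbb{Z}[\![q]\!]$ while $(6n-1)+(6n+1)=12n$, the operator $K^{\mathrm{up}}_{6n}$ preserves $\mathbb{Z}[\![q]\!]$; combined with \eqref{eq:recGw0} and the identity $A(n+1)=\tfrac{12(6n+1)(6n+5)}{(n+1)^{2}}A(n)$ this yields $n!\,A(n)\,G^{(1)}_{6n}=K^{\mathrm{up}}_{6n-6}\circ\dots\circ K^{\mathrm{up}}_{0}(1)\in\mathbb{Z}[\![q]\!]$, and then through \eqref{eq:recGw24} also $(6n+1)\,n!\,A(n)\,G^{(1)}_{6n+2}=12D\big(n!A(n)G^{(1)}_{6n}\big)-(6n-1)E_{2}\,n!A(n)G^{(1)}_{6n}\in\mathbb{Z}[\![q]\!]$. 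Thus (I) and (II) hold up to a spurious factor of size $O(n!)$, and only the primes dividing that factor remain. Concretely, since $1728=12^{3}$ one computes that the coefficient of $t^{k}$ in ${}_2F_1\big(\tfrac{6n+1}{12},\tfrac{6n+5}{12};n+1;1728t\big)$ equals
\[
c_{k}(n)=12^{k}\,\frac{n!}{(n+k)!\,k!}\prod_{i=0}^{k-1}(6n+1+12i)(6n+5+12i)=1728^{k}\,\frac{\binom{b_{1}+k-1}{k}\binom{b_{2}+k-1}{k}}{\binom{n+k}{k}},\qquad b_{1}=\tfrac{6n+1}{12},\ b_{2}=\tfrac{6n+5}{12},
\]
with the analogous formula for the $Q_{n}$-series ($6n-1$, $6n+7$ in place of $6n+1$, $6n+5$), where for $k\ge 1$ the factor $6n-1$ at $i=0$ cancels the denominator $6n-1$ and for $k=0$ one uses $\gcd(6n-1,6n+1)=1$ together with $(6n-1)\mid A(n)$ (because $\tfrac1{6n-1}\binom{6n}{3n}=2C_{3n-1}\in\mathbb{Z}$, as in the proof of Lemma \ref{lem:integrality}\eqref{claim:f12}). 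It then remains to show $v_{p}\big(A(n)c_{k}(n)\big)\ge 0$ for every prime $p$ and all $n,k\ge 0$.

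The primes $p\in\{2,3\}$ I would dispatch directly: from $v_{p}(N!)=\tfrac{N-s_{p}(N)}{p-1}$ together with $s_{2}(2n)=s_{2}(n)$, $s_{2}(6n)=s_{2}(3n)$ and $s_{3}(3n)=s_{3}(n)$, $s_{3}(6n)=s_{3}(2n)$ one gets $v_{2}(A(n))=2s_{2}(n)$ and $v_{3}(A(n))=s_{3}(n)$, while $v_{p}(c_{k}(n))=v_{p}(12^{k})-2v_{p}(k!)-c_{p}(n,k)$ (the product being a $p$-adic unit, its factors coprime to $6$), where $c_{p}(n,k)=\tfrac{s_{p}(n)+s_{p}(k)-s_{p}(n+k)}{p-1}$ counts the base-$p$ carries in $n+k$; combining, $v_{2}(A(n)c_{k}(n))=s_{2}(n)+s_{2}(k)+s_{2}(n+k)\ge 0$ and $v_{3}(A(n)c_{k}(n))=\tfrac12\big(s_{3}(n)+s_{3}(k)+s_{3}(n+k)\big)\ge 0$. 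For $p\ge 5$ the factor $1728^{k}$ is a $p$-adic unit and $b_{1},b_{2}\in\mathbb{Z}_{p}$, so the desired inequality reduces to $v_{p}(A(n))+v_{p}\!\binom{b_{1}+k-1}{k}+v_{p}\!\binom{b_{2}+k-1}{k}\ge v_{p}\!\binom{n+k}{k}$, an inequality among $p$-adic binomial-coefficient valuations which I would establish by Legendre's formula and Kummer-type carry counting.

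\textbf{The $p\ge 5$ inequality, uniform in $n$ and $k$, is the main obstacle.} Unlike Theorem \ref{thm:mainthm}, where the relevant moduli were the fixed prime powers $2^{8},3^{5},5^{2},7^{2}$ together with a finite range of primes, so that a single finite computation closed the argument, here the normalizing factors $24m$ and $12(2m+1)$ --- equivalently the $n!$ above --- contain arbitrarily large primes, and so there is no finite reduction: one must argue with the closed form $c_{k}(n)$ and a direct $p$-adic estimate for all $p$ at once. Granting that inequality, the four families follow from the reductions of the first paragraph, and the last sentence of the theorem (the weights $12m+4$ and $12m+10$) is immediate from $G^{(1)}_{6n+4}=E_{4}G^{(1)}_{6n}$ and $E_{4}\in 1+q\mathbb{Z}[\![q]\!]$.
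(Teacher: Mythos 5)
Your reductions are correct as far as they go: the identifications $\tfrac{N_{m,0}}{24m}=A(2m)$, $\tfrac{N_{m,6}}{12(2m+1)}=A(2m+1)$ and the corresponding statements for $N_{m,2},N_{m,8}$, the removal of the unit $\mathcal{F}_{1}(t)\in 1+t\mathbb{Z}[\![t]\!]$, the closed form for $c_{k}(n)$, the cancellation of the denominator $6n-1$ via $\tfrac{1}{6n-1}\binom{6n}{3n}=2C_{3n-1}$, and the digit-sum computations giving $v_{2}\bigl(A(n)c_{k}(n)\bigr)=s_{2}(n)+s_{2}(k)+s_{2}(n+k)$ and $v_{3}\bigl(A(n)c_{k}(n)\bigr)=\tfrac12\bigl(s_{3}(n)+s_{3}(k)+s_{3}(n+k)\bigr)$ all check out. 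But the proof is not complete: for every prime $p\ge 5$ you need the uniform inequality $v_{p}(A(n))+v_{p}\binom{b_{1}+k-1}{k}+v_{p}\binom{b_{2}+k-1}{k}\ge v_{p}\binom{n+k}{k}$ for all $n,k$, and you do not prove it — you only say you ``would establish'' it by Legendre and Kummer-type counting, and you then explicitly grant it as ``the main obstacle''. Since the normalizing constants involve arbitrarily large primes, this is exactly the part of the theorem that carries the content; estimating $p$-adic valuations of Pochhammer symbols at the $12$-divided arguments $\tfrac{6n+1}{12},\tfrac{6n+5}{12}$ against the carries of $n+k$ is not a routine digit count, and nothing in your write-up supplies it. As it stands the argument proves only the weaker statement $n!\,A(n)\,G^{(1)}_{6n}\in\mathbb{Z}[\![q]\!]$ from your $K^{\mathrm{up}}$ observation, not the theorem.

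The missing idea is precisely the one the paper uses: instead of attacking the $t$-expansion prime by prime, change the local parameter by the quadratic transformation \eqref{eq:quadratictransf}, i.e.\ work in $z$ with $t=z(1-z)/1728$-type substitution, equivalently $\tfrac{z}{432}=\tfrac{1}{864}\bigl\{1-(1-1728t)^{1/2}\bigr\}\in t\,\mathbb{Z}[\![t]\!]$ as in \eqref{eq:z432}. Then ${}_{2}F_{1}\bigl(m+\tfrac1{12},m+\tfrac5{12};2m+1;1728t\bigr)$ becomes ${}_{2}F_{1}\bigl(2m+\tfrac16,2m+\tfrac56;2m+1;z\bigr)$, which is an explicit constant times $\tfrac{d^{2m}}{dz^{2m}}{}_{2}F_{1}\bigl(\tfrac16,\tfrac56;1;z\bigr)$, and after multiplying by $\tfrac{N_{m,0}}{24m}$ the coefficients of $(z/432)^{n}$ are the manifest integers $\binom{2m+n}{n}\binom{6m+3n}{2m+n}\binom{12m+6n}{6m+3n}$ (with the factors $\tfrac{12m+6n+1}{12m+6n-1}$, $\tfrac{12m+6n+7}{12m+6n+5}$ absorbed via $\tfrac1{1-2\ell}\binom{2\ell}{\ell}\in\mathbb{Z}$ in the other two families); integrality in $t$ then follows by composition, with no valuation analysis at all. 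So either import that change of variables, or actually carry out the $p\ge5$ carry-counting argument; until one of these is done, the proposal has a genuine gap at its central step.
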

In the proof of this theorem, we use the following power series instead of $q$ as the local parameter at infinity, based on the method of Remark \ref{rem:E4fourthroot}:
\begin{align}
&{} \frac{z(q)}{432} = \frac{1}{864} \left\{ 1 - \left( 1- \frac{1728}{j(q)} \right)^{1/2}  \right\} = \frac{1}{864} \left( 1 - E_{4}(q)^{-3/2} E_{6}(q) \right) \label{eq:z432} \\
&=q-312 q^{2}+87084 q^{3}-23067968 q^{4}+5930898126 q^{5} +O(q^{6}).  \notag
\end{align}

\begin{proof}
Throughout this proof, we consider $z$ to be $z=\tfrac{1}{2}\{ 1- (1-1728t)^{1/2} \} \in 432t\mathbb{Z}[\![t]\!]$ and $t=j^{-1}$.
From Proposition \ref{prop:Gto2F1}, the normalized extremal quasimodular forms $G_{w}^{(1)}$ are the product of the power of $\mathcal{F}_{1}(t)={}_{2}F_{1}\left( \frac{1}{12}, \frac{5}{12} ; 1 ; 1728 t \right)$  and a hypergeometric series. Furthermore, since $\mathcal{F}_{1}(t) \in 1 +t \mathbb{Z}[\![t]\!]$ holds from Remark \ref{rem:E4fourthroot}, we have $\mathcal{F}_{1}(t)^{w-1} \in 1 +t \mathbb{Z}[\![t]\!]$. Therefore, as already mentioned in Lemma \ref{lem:integrality} \eqref{claim:fqj}, it suffices to show that $C_{w} \mathcal{F}_{1}(t)^{1-w} G_{w}^{(1)} \in \mathbb{Z}[\![t]\!]$ to prove the theorem, where the constant $C_{w}$ is an appropriate normalization constant.

First we calculate the case of $w=12m$. From the quadratic transformation formula  \eqref{eq:quadratictransf} we have
\begin{align}
&{} \frac{N_{m,0}}{24m}\,  {}_{2}F_{1} \left( m + \frac{1}{12}, m + \frac{5}{12} ; 2m+1 ; 1728 t  \right)  \notag   \\  
&= \frac{(12m)!}{(2m)!(4m)!(6m)!}\, {}_{2}F_{1} \left( 2m + \frac{1}{6}, 2m + \frac{5}{6} ; 2m+1 ; z  \right) \notag \\
&=  \frac{(12m)!}{(2m)!(4m)!(6m)!} \cdot \frac{(1)_{2m}}{(\tfrac{1}{6})_{2m} (\tfrac{5}{6})_{2m}} \cdot \frac{d^{2m}}{d z^{2m}} {}_{2}F_{1} \left( \frac{1}{6}, \frac{5}{6} ; 1 ; z  \right) \notag   \\
&= \sum_{n=0}^{\infty} \binom{2m+n}{n} \binom{6m+3n}{2m+n} \binom{12m+6n}{6m+3n} \left( \frac{z}{432} \right)^{n}  \in \mathbb{Z}[\![t]\!] .   \label{eq:binom12m}
\end{align}
It is convenient to use the following formulas to calculate the last equality:
\begin{align*}
432^{k} \left(\frac{1}{6}\right)_{k} \left(\frac{5}{6}\right)_{k} = \frac{\Gamma(k)\Gamma(6k)}{\Gamma(2k)\Gamma(3k)} = \frac{\Gamma(k+1)\Gamma(6k+1)}{\Gamma(2k+1)\Gamma(3k+1)}, \quad (\alpha + k)_{\ell} = \frac{(\alpha)_{k+\ell}}{(\alpha)_{k}}.
\end{align*}
A similar calculation for $w=12m+6$ yields
\begin{align}
&{} \frac{N_{m,6}}{12(2m+1)} \,  {}_{2}F_{1} \left( m + \frac{7}{12}, m + \frac{11}{12} ; 2m+2 ; 1728 t  \right) \notag   \\
&= \sum_{n=0}^{\infty} \binom{2m+n+1}{n} \binom{6m+3n+3}{2m+n+1} \binom{12m+6n+6}{6m+3n+3} \left( \frac{z}{432} \right)^{n} \in \mathbb{Z}[\![t]\!] .  \label{eq:binom12m6}
\end{align}
To prove the assertions about $G_{12m+2}^{(1)}$ and $G_{12m+8}^{(1)}$, we recall $\tfrac{1}{1-2n} \tbinom{2n}{n} \in \mathbb{Z}$. Then, we can see that the coefficient of $(z/432)^{n}$ of the following formal power series is an integer: 
\begin{align*}
{}_{2}F_{1} \left( -\frac{1}{6}, \frac{7}{6} ; 1 ; z  \right) = \sum_{n=0}^{\infty} (6n+1)\binom{3n}{n} \cdot \frac{1}{1-6n} \binom{6n}{3n} \cdot \left( \frac{z}{432}\right)^{n}  \in \mathbb{Z}[\![t]\!] .
\end{align*}
Hence we have
\begin{align*}
&{} \frac{N_{m,2}}{24m} \, {}_{2}F_{1} \left( m - \frac{1}{12}, m + \frac{7}{12} ; 2m+1 ; 1728 t  \right) \\
&= \frac{N_{m,2}}{24m} {}_{2}F_{1} \left( 2m - \frac{1}{6}, 2m + \frac{7}{6} ; 2m+1 ; z  \right) \\
&= \frac{N_{m,2}}{24m} \cdot \frac{(1)_{2m}}{(-\tfrac{1}{6})_{2m} (\tfrac{7}{6})_{2m}} \cdot \frac{d^{2m}}{d z^{2m}} {}_{2}F_{1} \left( -\frac{1}{6}, \frac{7}{6} ; 1 ; z  \right) \\
&= \sum_{n=0}^{\infty} \frac{12m+6n+1}{12m+6n-1} \binom{2m+n}{n} \binom{6m+3n}{2m+n} \binom{12m+6n}{6m+3n} \left( \frac{z}{432} \right)^{n} \in \mathbb{Z}[\![t]\!] 
\end{align*}
and also have
\begin{align*}
&{} \frac{N_{m,8}}{12(2m+1)} \, {}_{2}F_{1} \left( m + \frac{5}{12}, m + \frac{13}{12} ; 2m+2 ; 1728 t  \right) \\
&= \sum_{n=0}^{\infty} \frac{12m+6n+7}{12m+6n+5} \binom{2m+n+1}{n} \binom{6m+3n+3}{2m+n+1} \binom{12m+6n+6}{6m+3n+3} \left( \frac{z}{432} \right)^{n} \in \mathbb{Z}[\![t]\!] .
\end{align*}
This completes the proof of Theorem \ref{thm:intNG}.
\end{proof}

Considering the divisors of the normalizing factors $N_{m,a}$, Theorem \ref{thm:intNG} implies the following claim. 
\begin{corollary} \label{cor:denomd1exqmf}
The denominators of the Fourier coefficients of the normalized extremal quasimodular forms $G_{w}^{(1)}$ are only divisible by prime numbers $<w$. 
\end{corollary}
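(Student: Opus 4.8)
The plan is to read the possible denominators straight off Theorem~\ref{thm:intNG}. For every even weight $w\ge 6$ that theorem produces, according to the residue of $w$ modulo $12$, a positive integer $c_w$ with $c_w\,G_w^{(1)}\in\mathbb{Z}[\![q]\!]$: one may take $c_w=N_{m,0}/(24m)$ when $w=12m$ or $w=12m+4$, $c_w=N_{m,2}/(24m)$ when $w=12m+2$, $c_w=N_{m,6}/(12(2m+1))$ when $w=12m+6$ or $w=12m+10$, and $c_w=N_{m,8}/(12(2m+1))$ when $w=12m+8$; the remaining cases $w\le 4$ are trivial (there is no form of weight $4$, and $G_2^{(1)}=E_2\in\mathbb{Z}[\![q]\!]$). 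If a prime $p$ does not divide $c_w$, then $c_w$ is a unit in $\mathbb{Z}_p$, so $G_w^{(1)}=c_w^{-1}\bigl(c_w G_w^{(1)}\bigr)\in\mathbb{Z}_p[\![q]\!]$; hence any prime occurring in the denominator of a Fourier coefficient of $G_w^{(1)}$ must divide $c_w$. It therefore suffices to show that every prime divisor of $c_w$ is strictly smaller than $w$.

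The second step is an elementary estimate on the prime divisors of these integers. By \eqref{eq:d1nf02} and \eqref{eq:d1nf68} we have $N_{m,0}/(24m)=\binom{6m}{2m}\binom{12m}{6m}$ and $N_{m,6}/(12(2m+1))=\binom{6m+3}{2m+1}\binom{12m+6}{6m+3}$, whereas $N_{m,2}/(24m)=(12m+1)\binom{6m}{2m}\cdot\tfrac{1}{12m-1}\binom{12m}{6m}$ and $N_{m,8}/(12(2m+1))=(12m+7)\binom{6m+3}{2m+1}\cdot\tfrac{1}{12m+5}\binom{12m+6}{6m+3}$, the fractions $\tfrac{1}{12m-1}\binom{12m}{6m}$ and $\tfrac{1}{12m+5}\binom{12m+6}{6m+3}$ being integers by the identity $\tfrac{1}{1-2n}\binom{2n}{n}\in\mathbb{Z}$ from the proof of Lemma~\ref{lem:integrality}\eqref{claim:f12}, taken at $n=6m$ and $n=6m+3$. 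Now if a prime $p$ divides a binomial coefficient $\binom{N}{K}$ with $0\le K\le N$, then $p\le N$, because $v_p\bigl(\binom{N}{K}\bigr)=v_p(N!)-v_p(K!)-v_p((N-K)!)\le 0$ as soon as $p>N$, forcing it to be $0$. Applying this to each factor, the binomial parts of $c_w$ contribute only primes $\le 6m$, $\le 6m+3$, $\le 12m$, or $\le 12m+6$, and the two linear factors are $12m+1=w-1$ and $12m+7=w-1$. In the classes $w\in\{12m+2,\,12m+4,\,12m+8,\,12m+10\}$ each of these bounds is already $<w$; in the classes $w\in\{12m,\,12m+6\}$ the only bound meeting $w$ comes from $\binom{12m}{6m}$, respectively $\binom{12m+6}{6m+3}$, and there the value $w$ is excluded because $12m$, respectively $12m+6$, is composite. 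Thus all prime divisors of $c_w$ lie below $w$, which is the assertion.

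I do not anticipate a real obstacle. The only points that need a line of care are the integrality of $N_{m,2}/(24m)$ and $N_{m,8}/(12(2m+1))$ once the extra linear factor $12m+1$, respectively $12m+7$, is extracted, and the trivial remark that $12m$ and $12m+6$ are never prime, which is precisely what upgrades ``$\le w$'' to ``$<w$'' in the two residue classes where the naive bound would otherwise be sharp.
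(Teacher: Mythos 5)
Your proof is correct and takes essentially the same route as the paper: the corollary is read off from Theorem~\ref{thm:intNG} together with the observation (made at the start of Section~\ref{sec:congUV}) that the prime factors of the normalizing constants do not exceed the corresponding weight. Your explicit binomial decompositions of $N_{m,a}/(24m)$ and $N_{m,a}/(12(2m+1))$, and the remark that $12m$ and $12m+6$ are composite, simply fill in the details behind the paper's one-line justification.
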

This claim was originally stated as $G_{w}^{(r)} \in \mathbb{Z} \bigl[ \tfrac{1}{p} : p<w \bigr][\![q]\!] \; (1\le r \le 4)$ in 
\cite{kaneko2006extremal}. 
When $(w,r)=(6n,1)$ and $(w,r)=(6n+2,1), (6n+4,1)$, they are proved in \cite[Thm. 3.3]{pellarin2020extremal} and \cite[Thm. 1.8, Thm. 1.9]{mono2020conjecture}, respectively, and the original claim is proved in \cite{grabner2020quasimodular}. Note that our proof is mainly based on hypergeometric  expressions of $G_{w}^{(1)}$, which is different from their proof.

According to Theorem \ref{thm:intNG}, there exists a constant $c_{w}$ such that $c_{w} \, G_{w}^{(1)} \in \mathbb{Z}[\![q]\!]$. However, this constant is probably too large, and integrality holds for smaller numbers. For example, although $\tfrac{1}{48} N_{2,2}\, G_{26}^{(1)} =1454953500\, G_{26}^{(1)} \in \mathbb{Z}[\![q]\!]$ holds, we can show that the stronger claim $5 G_{26}^{(1)} \in \mathbb{Z}[\![q]\!]$ holds in the similar way as in the proof of Theorem \ref{thm:mainthm}. As a more general setting, we define the number $c_{w}^{(r)}$ as   $c_{w}^{(r)} := \min \{\, c \,|\, c\, G_{w}^{(r)} \in \mathbb{Z}[\![q]\!] \}$.
How do such numbers depend on weight and depth?

\begin{remark}
In the study of the denominator of Atkin polynomials \cite[\S 9]{kaneko1998supersingular}, the formal power series $\Phi_{n}(t)$ and $\Psi_{n}(t)$, which are defined as follows, play a central role: 
\begin{align*}
\Phi_{n+1}(t) = \Phi_{n}(t) - \lambda_{n}^{-}\, t\, \Phi_{n-1}(t), \quad \Psi_{n+1}(t) = \Psi_{n}(t) - \lambda_{n}^{+}\, t\, \Psi_{n-1}(t) \quad (n\ge1),
\end{align*}
where $\lambda_{1}^{-}=84, \; \lambda_{1}^{+}=-60$ and for $n>1$, 
\begin{align*}
\lambda_{n}^{\mp} = 12 \left( 6 \mp \dfrac{(-1)^{n}}{n-1} \right) \left( 6 \mp \dfrac{(-1)^{n}}{n} \right) .
\end{align*}
The initial power series are given by the following. 
\begin{align*}
\Phi_{0}(t) &= {}_{2}F_{1}\left( \tfrac{1}{12}, \tfrac{5}{12} ; 1 ; 1728 t \right)^{2}, \\
\Phi_{1}(t) &= 84 t\, {}_{2}F_{1}\left( \tfrac{1}{12}, \tfrac{5}{12} ; 1 ; 1728 t \right) {}_{2}F_{1}\left( \tfrac{5}{12}, \tfrac{13}{12} ; 2 ; 1728 t \right), \\
\Psi_{0}(t) &= {}_{2}F_{1}\left( \tfrac{1}{12}, \tfrac{5}{12} ; 1 ; 1728 t \right) {}_{2}F_{1}\left( -\tfrac{1}{12}, \tfrac{7}{12} ; 1 ; 1728 t \right), \\
\Psi_{1}(t) &= -60 t\, {}_{2}F_{1}\left( \tfrac{1}{12}, \tfrac{5}{12} ; 1 ; 1728 t \right) {}_{2}F_{1}\left( \tfrac{7}{12}, \tfrac{11}{12} ; 2 ; 1728 t \right) .
\end{align*}
Although the definition for the power series $\Psi_{n}(t)$ is not given in \cite{kaneko1998supersingular}, the details are omitted here because it can be derived from the same consideration as $\Phi_{n}(t)$. In fact, these power series and the power series $P_{n}(t)$ and $Q_{n}(t)$ defined by \eqref{eq:Pnt} and \eqref{eq:Qnt} have the following correspondences:
\begin{align*}
\Phi_{2m}(t) &= \tfrac{1}{12} N_{m,0}\, t^{2m} P_{2m}(t) , \quad \Phi_{2m+1}(t) = \tfrac{1}{12} N_{m,8}\, t^{2m+1} Q_{2m+1}(t), \\
\Psi_{2m}(t) &= - \tfrac{1}{12} N_{m,2}\, t^{2m} Q_{2m}(t) , \quad \Psi_{2m+1}(t) = - \tfrac{1}{12} N_{m,6}\, t^{2m+1} P_{2m+1}(t) .
\end{align*}
These correspondences can be proved by using a contiguous relation of a certain hypergeometric series. In \cite[p.~120]{kaneko1998supersingular}, it is stated without proof that ``the power series $\Phi_{n}(t)$ has integral coefficients and is divisible by $t^{n}$ for all $n$", which is essentially equivalent to a weaker version of our Theorem \ref{thm:intNG}. However, it should be emphasized that essentially the equivalent forms appeared in \cite{kaneko1998supersingular} before the introduction of the normalized extremal quasimodular forms in  \cite{kaneko2006extremal}. 
\end{remark}


\section{Further directions} \label{sec:furtherdirections}
Up to the previous section we have discussed the extremal quasimodular forms of depth $1$ on $\Gamma=SL_{2}(\mathbb{Z})$. It is therefore natural to consider a generalization that changes these factors. 
The case of depth $2$ on $\Gamma$ has already been considered in \cite{kaneko2006extremal,grabner2020quasimodular,kaminaka2021extremal}, but we note that in these papers there is no perspective of using a generalized hypergeometric series.
Therefore, we focus mainly on these hypergeometric aspects and present in this section some complementary results of previous studies without detailed proofs.

To describe the case of depth $\ge2$, we introduce the Rankin--Cohen brackets (\cite[\S 5.2]{zagier2008elliptic}, see also \cite{zagier1994modularforms,cohen1997automorphic}), which is defined for integers $k,\ell , n \ge 0$ and functions $f,g$ on $\mathfrak{H}$ by
\begin{align*}
[f,g]_{n}^{(k,\ell)} := \sum_{i=0}^{n} (-1)^{i} \binom{n+k-1}{n-i} \binom{n+\ell -1}{i} D^{i}(f) D^{n-i}(g) .
\end{align*}
We then define the $(r+1)$-th order differential operator $\theta_{k}^{(r)}$ as follows.
\begin{align*}
\theta_{k}^{(r)}(f) &:= D^{r+1}(f) -\frac{k+r}{12} [E_{2} , f ]_{r}^{(2,k)} \notag \\
&= D^{r+1}(f) - \frac{k+r}{12} \sum_{i=0}^{r} (-1)^{i} \binom{r+1}{i+1} \binom{k+r-1}{i} D^{i}(E_{2}) D^{r-i}(f) . 
\end{align*}
In particular, when $r=0,1$, this operator gives the Serre derivative $\partial_{k}$ and the differential operator $L_{k+1}$ defined by \eqref{eq:dopLw}, respectively. Also, by specializing Proposition 3.3 in \cite{kaneko2006extremal}, we can see that $f \in QM_{k}^{(r)} \Rightarrow \theta_{k-r}^{(r)}(f) \in QM_{k+2(r+1)}^{(r)}$.

The following identity holds for the integer $r \in \{2,3,4\}$:
\begin{align}
\theta_{k}^{(r)} = \frac{(k+r-1)(k+2r)}{2(r-1)(k+r)} \theta_{k+2}^{(r-1)} \circ \partial_{k}   - \frac{k(k+r+1)}{2(r-1)(k+r)} \partial_{k+2r} \circ \theta_{k}^{(r-1)} . \label{eq:composition_of_theta}
\end{align}
The proof of this identity by direct calculation is long and tedious, but by using \eqref{eq:composition_of_theta} repeatedly, we can easily rewrite the differential operator $\theta_{k}^{(r)}$ into a form using the Serre derivative $\partial_{k}$ for $r \in \{2,3,4\}$. 
As will be described later, some extremal quasimodular forms are annihilated by the differential operator $\theta_{k}^{(r)}$.  Therefore, such rewriting is useful to derive an inductive structure of extremal quasimodular forms according to Grabner's method. 
Incidentally, the identity \eqref{eq:composition_of_theta} does not seem to hold for $r\ge 5$, for example, we have the following identities for $r=5$ and $6$:
\begin{align*}
&{}\theta_{k}^{(5)} - \left\{ \frac{(k+4)(k+10)}{8(k+5)} \theta_{k+2}^{(4)} \circ \partial_{k}  -  \frac{k(k+6)}{8(k+5)} \partial_{k+10} \circ \theta_{k}^{(4)} \right\} \\
&= \frac{k(k+4)(k+6)(k+10)(k^{2}+10k +36)}{1440} \Delta , \\
&{}\theta_{k}^{(6)} - \left\{ \frac{(k+5)(k+12)}{10(k+6)} \theta_{k+2}^{(5)} \circ \partial_{k}  -  \frac{k(k+7)}{10(k+6)} \partial_{k+12} \circ \theta_{k}^{(5)} \right\} \\
&= \frac{k(k+5)(k+7)(k+12)(k^{2}+12k +47)}{300} \Delta \partial_{k}.
\end{align*}

In \cite{kaneko2006extremal} Kaneko and Koike showed that $\theta_{w-1}^{(1)}(G_{w}^{(1)})=0 \; (w\equiv 0 \pmod{6})$ (Recall the Kaneko--Zagier equation \eqref{eq:2ndKZeqn}.) and $\theta_{w-2}^{(2)}(G_{w}^{(2)})=0 \; (w\equiv 0 \pmod{4})$, and conjectured that $\theta_{w-3}^{(3)}(G_{w}^{(3)})=0 \; (w\equiv 0 \pmod{6})$ and $\theta_{w-4}^{(4)}(G_{w}^{(4)})=0 \; (w\equiv 0 \pmod{12})$. Hence, for $r \in \{1,2,3,4\}$, we also call the differential equation $\theta_{w-r}^{(r)}(f)=0$ the Kaneko--Zagier equation. 
Recently, Grabner \cite{grabner2020quasimodular} gave an affirmative answer for their conjecture, more precisely, he gave a concrete form of the differential equation satisfied by the balanced quasimodular forms. 
By specializing his results, we can find the concrete form of the differential equation satisfied by the normalized extremal quasimodular forms $G_{w}^{(r)}$ for any even integer $w\ge0$ and $r\in\{1,2,3\}$ and $G_{12n}^{(4)}$. 
Of course, for $w\equiv0 \pmod{12}$ and $a \in \{2,4,6,8,10\}$, there is a fifth-order differential operator $L_{w,a}^{(4)}$ such that $L_{w,a}^{(4)}(G_{w+a}^{(4)}) =0$. 
The specific form of $L_{w,a}^{(4)}$ is not described here, but is of the form shown below:
\begin{align*}
L_{w,a}^{(4)} = f_{a}\, \partial_{w+a-4}^{5} + (\text{lower-order terms on $\partial_{*}$}) \in M_{*}(\Gamma)[\partial_{*}], 
\end{align*}
where $(a,f_{a}) = (2,E_{10}), (4,E_{8}), (6,E_{6}), (8,E_{4}),  (10,E_{14})$. 


\subsection{Case of depth $r\ge2$}
\textbf{Depth 2.} The Kaneko--Zagier equation $\theta_{w-2}^{(2)}(f)=0$ is transformed into
\begin{align*}
&{} z^{2}(1-z) \frac{d^{3} g}{d z^{3}} + z \left( -\frac{w-12}{4}+ \frac{w-18}{4} z \right) \frac{d^{2} g}{d z^{2}} \\
&{} \quad + \left( -\frac{w-4}{4} -\frac{3w^{2} -72w +452}{144} z \right) \frac{d g}{d z} +\frac{(w-2)(w-6)(w-10)}{1728} g = 0
\end{align*}
or equivalently
\begin{align*}
\left\{ \Theta^{2} \left( \Theta - \frac{w-4}{4} -1  \right) - t \left( \Theta - \frac{w-2}{12} \right) \left( \Theta - \frac{w-6}{12} \right) \left( \Theta - \frac{w-10}{12} \right)\right\} g =0,
\end{align*}
where $g(\tau ) = E_{4}(\tau )^{-(w-2)/4} f(\tau )$ and $z=1728/j(\tau )$. Since this differential equation is a hypergeometric differential equation, a similar statement as in Proposition \ref{prop:Gto2F1} holds for depth $2$. 
\begin{proposition}
The normalized extremal quasimodular forms of even weight and depth $2$ on $\Gamma$ have the following hypergeometric expressions.
\begin{align}
\begin{split}
G_{4n}^{(2)}(\tau ) &= j(\tau )^{-n} E_{4}(\tau )^{(2n-1)/2}\\
&{} \quad \times {}_{3}F_{2} \left( \frac{4n+1}{6} , \frac{4n+3}{6} ,  \frac{4n+5}{6} ; n+1, n+1 ; \frac{1728}{j(\tau )} \right), 
\end{split}\label{eq:d2G4nHyp} \\
\begin{split}
G_{4n+2}^{(2)}(\tau ) &= j(\tau )^{-n} E_{4}(\tau )^{(2n-3)/2} E_{6}(\tau )\\
&{} \quad \times {}_{3}F_{2} \left( \frac{4n+3}{6} , \frac{4n+5}{6} ,  \frac{4n+7}{6} ; n+1, n+1 ; \frac{1728}{j(\tau )} \right) . 
\end{split}\label{eq:d2G4n+2Hyp}
\end{align}
\end{proposition}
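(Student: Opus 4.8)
The plan is to follow the proof of Proposition~\ref{prop:Gto2F1} essentially verbatim, treating the residue classes $w=4n$ and $w=4n+2$ separately. For $w=4n$ I would start from the fact, due to Kaneko and Koike, that $\theta_{w-2}^{(2)}(G_{w}^{(2)})=0$. Under $g(\tau)=E_{4}(\tau)^{-(w-2)/4}f(\tau)$, $z=1728/j(\tau)$ --- the change of variables already carried out in the text --- this becomes the hypergeometric equation $\bigl\{\Theta^{2}(\Theta-\tfrac{w-4}{4}-1)-z(\Theta-\tfrac{w-2}{12})(\Theta-\tfrac{w-6}{12})(\Theta-\tfrac{w-10}{12})\bigr\}g=0$, and setting $w=4n$ and conjugating by $z^{n}$ (i.e.\ $\Theta\mapsto\Theta+n$) puts the operator into the standard ${}_{3}F_{2}$-shape $\Theta(\Theta+n)^{2}-z(\Theta+\tfrac{4n+1}{6})(\Theta+\tfrac{4n+3}{6})(\Theta+\tfrac{4n+5}{6})$; hence $z^{n}\,{}_{3}F_{2}(\tfrac{4n+1}{6},\tfrac{4n+3}{6},\tfrac{4n+5}{6};n+1,n+1;z)$ solves the original equation. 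Its exponents at $z=0$ are $0,0,n$, and this $z^{n}{}_{3}F_{2}(\cdots)$ is, up to scalars, the only logarithm-free solution of positive $z$-valuation: the remaining solutions are the valuation-$0$ holomorphic one --- which for $n\ge1$ does not even exist, since the recursion for $1+O(z)$ breaks at order $n$ as none of $\tfrac{2n-1}{6},\tfrac{2n-3}{6},\tfrac{2n-5}{6}$ is an integer --- together with genuinely logarithmic solutions. Because $G_{4n}^{(2)}$ is a quasimodular form, $g=E_{4}^{-(w-2)/4}G_{4n}^{(2)}$ is logarithm-free of $z$-valuation $\nu(G_{4n}^{(2)})=\dim_{\mathbb{C}}QM_{4n}^{(2)}-1=n$, so $g$ is a scalar multiple of $z^{n}{}_{3}F_{2}(\cdots)$; undoing the substitution and matching the leading coefficient (the scalar comes out to $1728^{-n}$, using $j^{-1}=q+O(q^{2})$, $E_{4}=1+O(q)$, ${}_{3}F_{2}(\cdots;0)=1$) produces \eqref{eq:d2G4nHyp}.

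For $w=4n+2$ I would imitate the $G_{6n+2}^{(1)}$ part of the proof of Proposition~\ref{prop:Gto2F1}. The Serre derivative $\partial_{4n-2}\colon QM_{4n}^{(2)}\to QM_{4n+2}^{(2)}$ sends $G_{4n}^{(2)}$ to a form of $q$-valuation exactly $n$ (its leading coefficient is $n-\tfrac{4n-2}{12}=\tfrac{4n+1}{6}\ne0$), and for $n\ge2$ one has $\nu_{\max}(1,4n+2)=\dim_{\mathbb{C}}QM_{4n+2}^{(2)}-1-\dim_{\mathbb{C}}M_{4n-2}<n$, so any weight-$(4n+2)$ quasimodular form of valuation $n$ must have depth exactly $2$; by the uniqueness of extremal forms, $\partial_{4n-2}(G_{4n}^{(2)})=\tfrac{4n+1}{6}\,G_{4n+2}^{(2)}$. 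It then remains to apply $\partial_{4n-2}=D-\tfrac{4n-2}{12}E_{2}$ to \eqref{eq:d2G4nHyp}. Writing $G_{4n}^{(2)}=h\cdot\phi(1728/j)$ with $h=j^{-n}E_{4}^{(2n-1)/2}$ and $\phi(z)={}_{3}F_{2}(\tfrac{4n+1}{6},\tfrac{4n+3}{6},\tfrac{4n+5}{6};n+1,n+1;z)$, Ramanujan's identities \eqref{eq:DEisen} and $D(j)=-jE_{6}/E_{4}$ give $D(h)/h=\tfrac{(4n+1)E_{6}+(2n-1)E_{2}E_{4}}{6E_{4}}$ and $D(1728/j)=(1728/j)E_{6}/E_{4}$, hence $D(G_{4n}^{(2)})=h\bigl[\tfrac{(4n+1)E_{6}+(2n-1)E_{2}E_{4}}{6E_{4}}\phi(1728/j)+\tfrac{E_{6}}{E_{4}}(\Theta\phi)(1728/j)\bigr]$ with $\Theta=z\tfrac{d}{dz}$; subtracting $\tfrac{4n-2}{12}E_{2}G_{4n}^{(2)}$ cancels the $E_{2}$-contribution, leaving $\partial_{4n-2}(G_{4n}^{(2)})=j^{-n}E_{4}^{(2n-3)/2}E_{6}\cdot\bigl((\tfrac{4n+1}{6}+\Theta)\phi\bigr)(1728/j)$. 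Finally the contiguous relation $(a_{1}+\Theta)\,{}_{p}F_{p-1}(a_{1},\dots;b_{1},\dots;z)=a_{1}\,{}_{p}F_{p-1}(a_{1}+1,\dots;b_{1},\dots;z)$ --- the ${}_{p}F_{p-1}$-analogue of \eqref{eq:dHyp}, proved by the same identity $(a_{1}+m)(a_{1})_{m}=a_{1}(a_{1}+1)_{m}$ --- applied with $a_{1}=\tfrac{4n+1}{6}$, together with the symmetry of the upper parameters, turns $(\tfrac{4n+1}{6}+\Theta)\phi$ into $\tfrac{4n+1}{6}\,{}_{3}F_{2}(\tfrac{4n+3}{6},\tfrac{4n+5}{6},\tfrac{4n+7}{6};n+1,n+1;z)$, and cancelling $\tfrac{4n+1}{6}$ yields \eqref{eq:d2G4n+2Hyp}.

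I do not expect a deep obstacle; the delicate point is really the two degenerate weights $w\in\{2,6\}$ of the second family, where $\dim_{\mathbb{C}}M_{4n-2}=0$, the inequality $\nu_{\max}(1,4n+2)<n$ fails, $QM_{w}^{(2)}=QM_{w}^{(1)}$, and no genuine depth-$2$ form exists. For those I would fall back on the convention $G_{2}^{(1)}=G_{2}^{(2)}=E_{2}$ used elsewhere in the paper and check directly that \eqref{eq:d2G4n+2Hyp} stays consistent: at $n=0$ it reads $E_{4}^{-3/2}E_{6}\,{}_{3}F_{2}(\tfrac{1}{2},\tfrac{5}{6},\tfrac{7}{6};1,1;1728/j)$, which equals $E_{2}$ by \eqref{eq:E2Hyp3F2} together with $E_{6}=(1-1728/j)^{1/2}E_{4}^{3/2}$ (from \eqref{eq:E6Hyp} and \eqref{eq:E4Hyp}), while at $n=1$ one checks it against $G_{6}^{(1)}=(E_{2}E_{4}-E_{6})/720$. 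A last routine point to record is that the two ${}_{3}F_{2}$'s in \eqref{eq:d2G4nHyp} and \eqref{eq:d2G4n+2Hyp} are honest power series --- their lower parameters $n+1$ being positive integers --- so that discarding the logarithmic solutions of the hypergeometric equation introduces no spurious logarithms.
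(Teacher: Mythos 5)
Your proposal is correct and follows essentially the route the paper intends: the paper omits the proof precisely because it is "similar to Proposition \ref{prop:Gto2F1}", deriving \eqref{eq:d2G4nHyp} from the hypergeometric form of $\theta_{w-2}^{(2)}(f)=0$ and \eqref{eq:d2G4n+2Hyp} from the relation $G_{4n+2}^{(2)}=\tfrac{6}{4n+1}\partial_{4n-2}(G_{4n}^{(2)})$, which you rederive from the dimension count rather than citing Grabner. Your extra care about the log-free solution, the degenerate weights $w=2,6$, and the contiguous relation for ${}_{3}F_{2}$ all check out.
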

From Grabner's result \cite{grabner2020quasimodular}, the relation $G_{4n+2}^{(2)} = \frac{6}{4n+1} \partial_{4n-2}(G_{4n}^{(2)})$ holds, so the latter expression can be obtained from the former after a little calculation. 
We omit the proof because it is similar to Proposition~\ref{prop:Gto2F1}. 
It is known from \cite{kaminaka2021extremal} that $\mathcal{E}_{2}=\{4,8\}$, and the Fourier coefficients of the corresponding normalized extremal quasimodular forms are given below:  
\begin{align*}
G_{4}^{(2)} &= \frac{E_{4} - E_{2}^{2}}{288} = - \frac{D(E_{2})}{24} = \sum_{n=1}^{\infty}  n \sigma_{1}(n) q^{n} \\
&= q+6 q^2+12 q^3+28 q^4+30 q^5+72 q^6+56 q^7+120 q^8+O(q^{9})   , \\
G_{8}^{(2)} &= \frac{5 E_{4}^{2} + 2 E_{2} E_{6} -7 E_{2}^{2} E_{4}}{362880} = \frac{1}{30} \sum_{n=2}^{\infty}  n( \sigma_{5}(n) - n \sigma_{3}(n) ) q^{n} \\
&= q^2+16 q^3+102 q^4+416 q^5+1308 q^6+3360 q^7+7772 q^8+O(q^{9}).
\end{align*}
The positivity of the Fourier coefficients of $G_{8}^{(2)}$ can also be seen from the fact that $D(G_{8}^{(2)})=2G_{6}^{(1)}G_{4}^{(2)}$. 
In these cases, it is easier to show the integrality by calculating the Fourier coefficients directly, but here we introduce a proof using a hypergeometric series as in the case of depth~1. We define
\begin{align*}
R_{n}(t) = {}_{3}F_{2} \left( \frac{4n+1}{6} , \frac{4n+3}{6} ,  \frac{4n+5}{6} ; n+1, n+1 ; 1728 t \right),
\end{align*}
and then $R_{1}(t) \in \mathbb{Z}[\![t]\!] \Leftrightarrow G_{4}^{(2)} \in \mathbb{Z}[\![q]\!]$ and $R_{2}(t) \in \mathbb{Z}[\![t]\!] \Leftrightarrow G_{8}^{(2)} \in \mathbb{Z}[\![q]\!]$ hold. Since
\begin{align*}
R_{1}(t) &= \sum_{r=0}^{\infty} \binom{2r+1}{r} \binom{3r+1}{r+1} \binom{6r+1}{3r}\, t^{r}, \\
R_{2}(t) &= \sum_{r=0}^{\infty} \frac{1}{210} \binom{2r+2}{r} \binom{3r+4}{r+2} \binom{6r+7}{3r+3}\, t^{r},
\end{align*}
the integrality of the former is obvious, while that of the latter is not. 
Noticing that the exponents of the prime factors of $210=2 \cdot 3 \cdot 5 \cdot  7$ are all $1$, we can prove the latter integrality using the Lucas' theorem in \eqref{eq:Lucas}. For $p=5,7$ we set $r=\ell p+m\; (\ell \in \mathbb{Z}_{\ge0}, 0\le m \le p-1)$ and use Lucas' theorem only once, but for $p=2,3$, we have to use it several times. For example, the calculation for $p=2$ and $r=8\ell+2$ is performed as follows.
\begin{align*}
&{} \binom{16\ell+6}{8\ell+2} \binom{24\ell+10}{8\ell+4} \binom{48\ell+19}{24\ell+9} \equiv \binom{0}{0}^{2} \binom{1}{1} \binom{8\ell+3}{4\ell+1} \binom{12\ell+5}{4\ell+2} \binom{24\ell+9}{12\ell+4} \\
&\equiv \binom{1}{1} \binom{1}{0}^{2} \binom{4\ell+1}{2\ell} \binom{6\ell+2}{2\ell+1}  \binom{12\ell+4}{6\ell+2} \equiv \binom{1}{0} \binom{0}{1} \binom{0}{0} \binom{2\ell}{\ell} \binom{3\ell+1}{\ell} \binom{6\ell+2}{3\ell +1} \\
&\equiv 0 \pmod{2} \; \left( \text{note } \tbinom{0}{1}=0 \right).
\end{align*}

The following assertions are the depth $2$ counterparts of Proposition \ref{prop:d1basisB}. The proof is the same, so it is omitted. 
\begin{proposition}
For each even integer $k\ge2$, a basis $\mathcal{B}_{k}^{(2)}$ of the space $QM_{k}^{(2)}(\Gamma)$ is given by the following set:
\begin{align*}
\mathcal{B}_{4n}^{(2)} &= \left\{ E_{4}^{n}, E_{4}^{n-1} G_{4}^{(2)}, \cdots , E_{4} G_{4n-4}^{(2)}, G_{4n}^{(2)} \right\} , \\
\mathcal{B}_{4n+2}^{(2)} &= \left\{ E_{4}^{n}G_{2}^{(2)}, E_{4}^{n-1} G_{6}^{(2)}, \cdots , E_{4} G_{4n-2}^{(2)}, G_{4n+2}^{(2)} \right\} .
\end{align*}
\end{proposition}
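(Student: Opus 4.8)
The plan is to run the argument of Proposition~\ref{prop:d1basisB} essentially verbatim; the only genuinely new ingredient is the relevant dimension count, which I would extract first. Setting $r=2$ in the generating function of $\dim_{\mathbb{C}}QM_{2k}^{(r)}$ recalled in Section~\ref{sec:intro} gives $\sum_{k\ge0}\dim_{\mathbb{C}}QM_{2k}^{(2)}\,T^{2k}=\frac{1}{(1-T^{2})(1-T^{4})}$, so that $\dim_{\mathbb{C}}QM_{4n}^{(2)}=\dim_{\mathbb{C}}QM_{4n+2}^{(2)}=n+1$. Since $\nu_{\max}(2,w)=\dim_{\mathbb{C}}QM_{w}^{(2)}-1$ by Pellarin's bound, we get $\nu_{\max}(2,4n)=\nu_{\max}(2,4n+2)=n$; in particular each proposed set $\mathcal{B}_{4n}^{(2)}$, $\mathcal{B}_{4n+2}^{(2)}$ has exactly $n+1$ members, matching the dimension of the space it is meant to span.

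Next I would verify that these members are ``echelonized'' in the $q$-expansion. Write $f_{j}:=E_{4}^{\,n-j}G_{4j}^{(2)}$ for $0\le j\le n$ (with the convention $G_{0}^{(2)}=1$), and likewise $g_{j}:=E_{4}^{\,n-j}G_{4j+2}^{(2)}$ in the weight-$(4n+2)$ case. By the existence/uniqueness results for depth $r\in\{1,2,3,4\}$ recalled in Section~\ref{sec:intro}, $G_{4j}^{(2)}$ (resp.\ $G_{4j+2}^{(2)}$) is the normalized extremal quasimodular form of vanishing order $\dim_{\mathbb{C}}QM_{4j}^{(2)}-1=j$ (resp.\ $\dim_{\mathbb{C}}QM_{4j+2}^{(2)}-1=j$), so it equals $q^{j}(1+O(q))$; since $E_{4}\in1+q\mathbb{Z}[\![q]\!]$ this gives $f_{j}\in QM_{4n}^{(2)}$ with $f_{j}=q^{j}(1+O(q))$, and similarly $g_{j}\in QM_{4n+2}^{(2)}$ with $g_{j}=q^{j}(1+O(q))$.

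The spanning property then follows exactly as in Proposition~\ref{prop:d1basisB}: given $f\in QM_{4n}^{(2)}$, choose $a_{0},\dots,a_{n}$ successively so that $F:=f-\sum_{j=0}^{n}a_{j}f_{j}=O(q^{n+1})\in QM_{4n}^{(2)}$; since $\nu_{\max}(2,4n)=n$, the form $F$ must vanish. Linear independence is immediate from the triangular shape of the leading terms: a relation $\sum_{j}a_{j}f_{j}=0$ forces $a_{0}=0$ by reading off the coefficient of $q^{0}$, then $a_{1}=0$ from $q^{1}$, and so on. The weight-$(4n+2)$ case is identical with $f_{j}$ replaced by $g_{j}$ throughout.

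I do not foresee a real obstacle here: once the dimensions are known the argument is purely linear-algebraic. The only facts that must be cited rather than reproved are the existence, uniqueness, and vanishing-order normalization of $G_{4j}^{(2)}$ and $G_{4j+2}^{(2)}$ for all $j\le n$ — precisely the depth-$2$ instance of Pellarin's bound together with Grabner's inductive construction, both recalled in Section~\ref{sec:intro}.
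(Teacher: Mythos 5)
Your proposal is correct and follows essentially the same route as the paper, which omits the proof of this proposition precisely because it is the argument of Proposition~\ref{prop:d1basisB} repeated verbatim: echelonized leading terms $q^{j}(1+O(q))$ for $E_{4}^{\,n-j}G_{4j}^{(2)}$ (resp.\ $E_{4}^{\,n-j}G_{4j+2}^{(2)}$), spanning via $\nu_{\max}(2,4n)=\nu_{\max}(2,4n+2)=n$, and linear independence from the triangular shape. Your explicit dimension count $\dim_{\mathbb{C}}QM_{4n}^{(2)}=\dim_{\mathbb{C}}QM_{4n+2}^{(2)}=n+1$ and the appeal to the depth-$2$ existence/uniqueness facts recalled in Section~\ref{sec:intro} are exactly the ingredients the paper takes for granted.
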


\textbf{Depth 3.} The Kaneko--Zagier equation $\theta_{w-3}^{(3)}(f)=0$ is transformed into  
\begin{align*}
&{} z^{3}(1-z)^{2} \frac{d^{4} g}{d z^{4}} + z^{2} (1-z) \left( -\frac{w-18}{3} + \frac{w-27}{3} z  \right)  \frac{d^{3} g}{d z^{3}} \\
&+ z \left( -w +7 - \frac{3w^{2}-198w +1825}{72} z + \frac{3w^{2}-126w +1375}{72} z^{2} \right)  \frac{d^{2} g}{d z^{2}} \\
&+ \left\{ -\frac{w-3}{3} +\frac{w^3-36 w^2+781 w-3540}{432} z -\frac{(w-15)(w^2-30 w+241)}{432} z^{2} \right\} \frac{d g}{d z} \\
&+ \frac{(w-3) \left\{ -4 (w^3-3 w^2+62 w-180) + (w-7)(w-11)(w-15) z  \right\} }{20736} g =0,
\end{align*}
where $g(\tau ) = E_{4}(\tau )^{-\frac{w-3}{4}} f(\tau )$ and $z=1728/j(\tau )$.
This differential equation is not a hypergeometric differential equation. However, noting the ``accidental relation" $G_{6}^{(3)} = \tfrac{1}{30} ( D(G_{4}^{(2)}) - E_{2} G_{4}^{(2)} )$, we obtain the following expression of $G_{6}^{(3)}$ using a hypergeometric series:  
\begin{align*}
G_{6}^{(3)} &= j^{-2} \left( 1- \frac{1728}{j} \right)^{1/2} E_{4}^{3/4} \\
&{} \quad \times \left\{ 21\, {}_{2}F_{1} \left( \frac{1}{12}, \frac{5}{12} ; 1 ; \frac{1728}{j} \right) {}_{3}F_{2} \left( \frac{11}{6}, \frac{13}{6}, \frac{5}{2} ; 3, 3 ; \frac{1728}{j} \right) \right. \\
&{} \qquad \left. -20\,  {}_{2}F_{1} \left( \frac{13}{12}, \frac{17}{12} ; 2 ; \frac{1728}{j} \right) {}_{3}F_{2} \left( \frac{5}{6}, \frac{7}{6}, \frac{3}{2} ; 2, 2 ; \frac{1728}{j} \right) \right\} \\
&= t^{2}+1496 t^{3}+2072262 t^{4}+2893548528 t^{5} +O(t^{6}) \quad (t=1/j) .
\end{align*}

It is known from \cite{kaminaka2021extremal} that $\mathcal{E}_{3}=\{6\}$, more specifically, we have
\begin{align*}
G_{6}^{(3)} &= \frac{5E_{2}^{3} - 3 E_{2} E_{4} -2 E_{6}}{51840} = \frac{1}{6} \sum_{n=2}^{\infty}  n ( \sigma_{3}(n) - n \sigma_{1}(n) ) q^{n} \\
&= q^2+8 q^3+30 q^4+80 q^5+180 q^6+336 q^7+620 q^8+960 q^9+O(q^{10}) \in \mathbb{Z}[\![q]\!].
\end{align*}
The positivity of the Fourier coefficients of $G_{6}^{(3)}$ can also be seen from the fact that $D(G_{6}^{(3)})=2(G_{4}^{(2)})^{2}$. 
In considering the meaning of integrality of Fourier coefficients, it is worth noting that the $d$-th Fourier coefficient of $G_{6}^{(3)}$ gives the number of simply ramified coverings of genus $2$ and degree $d$ of an elliptic curve over $\mathbb{C}$. See  \cite{dijkgraaf1995mirror,kaneko1995generalizedJacobitheta} for more details on this claim.

\begin{proposition}
For any even integer $k\ge2$, a basis $\mathcal{B}_{k}^{(3)}$ of the space $QM_{k}^{(3)}(\Gamma)$ is given by the following set:
\begin{align*}
\mathcal{B}_{12m}^{(3)} &= \left\{ E_{4}^{3(m-\ell)} G_{12\ell }^{(3)} \right\}_{0\le \ell \le m} \cup \left\{ E_{4}^{3(m-\ell)-1} G_{12\ell +4}^{(3)} \right\}_{0\le \ell \le m-1} \\
&{} \quad \cup \left\{ E_{4}^{3(m-\ell)-2} G_{12\ell +8}^{(3)} \right\}_{0\le \ell \le m-1} \cup \left\{ E_{4}^{3(m-\ell)-4} E_{6} G_{12\ell +10}^{(3)} \right\}_{0\le \ell \le m-2} \\
&{} \quad \cup \left\{ E_{6} G_{12m-6}^{(3)} \right\} , \\
\mathcal{B}_{12m+2}^{(3)} &= \left\{ E_{4}^{3(m-\ell)} G_{12\ell +2}^{(3)} \right\}_{0\le \ell \le m} \cup \left\{ E_{4}^{3(m-\ell)-2} E_{6} G_{12\ell +4}^{(3)} \right\}_{0\le \ell \le m-1} \\
&{} \quad \cup \left\{ E_{4}^{3(m-\ell)-1} G_{12\ell +6}^{(3)} \right\}_{0\le \ell \le m-1} \cup  \left\{ E_{4}^{3(m-\ell)-2} G_{12\ell +10}^{(3)} \right\}_{0\le \ell \le m-1}, \\
\mathcal{B}_{12m+4}^{(3)} &= E_{4} \mathcal{B}_{12m}^{(3)} \cup \left\{ G_{12m +4}^{(3)} \right\} , \quad 
\mathcal{B}_{12m+6}^{(3)} = E_{4} \mathcal{B}_{12m+2}^{(3)} \cup \left\{ E_{6} G_{12m }^{(3)} , G_{12m +6}^{(3)} \right\} , \\
\mathcal{B}_{12m+8}^{(3)} &= E_{4} \mathcal{B}_{12m+4}^{(3)} \cup \left\{ G_{12m +8}^{(3)} \right\} , \quad 
\mathcal{B}_{12m+10}^{(3)} = E_{4} \mathcal{B}_{12m+6}^{(3)} \cup \left\{ G_{12m +10}^{(3)} \right\} .
\end{align*}
\end{proposition}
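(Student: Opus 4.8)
The plan is to follow the proof of Proposition \ref{prop:d1basisB}, with the depth-$1$ ingredients replaced by their depth-$3$ counterparts. One first checks that $\mathcal{B}_{k}^{(3)} \subset QM_{k}^{(3)}(\Gamma)$ — the weights of the factors add up to $k$, and multiplication by the modular forms $E_{4},E_{6}$ does not change the depth, which is $\le 3$ for every normalized extremal form that occurs — and that $|\mathcal{B}_{k}^{(3)}| = \dim_{\mathbb{C}} QM_{k}^{(3)} = \sum_{\ell=0}^{3}\dim_{\mathbb{C}} M_{k-2\ell}$ by elementary bookkeeping with the classical dimension formula for $M_{k}$ (for instance $|\mathcal{B}_{12m}^{(3)}| = (m+1)+m+m+(m-1)+1 = 4m+1 = \dim_{\mathbb{C}} QM_{12m}^{(3)}$, and the remaining five residue classes modulo $12$ are analogous). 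It then suffices to prove linear independence, for which — exactly as in Proposition \ref{prop:d1basisB} — one uses Pellarin's bound $\nu_{\max}(3,k) = \dim_{\mathbb{C}} QM_{k}^{(3)} - 1$: a family of $\dim_{\mathbb{C}} QM_{k}^{(3)}$ elements of $QM_{k}^{(3)}$ whose vanishing orders exhaust $0,1,\dots,\dim_{\mathbb{C}} QM_{k}^{(3)}-1$ is automatically a basis, since any member of $QM_{k}^{(3)}$ can be reduced against them to one of vanishing order $> \nu_{\max}(3,k)$, hence to $0$.

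Using the displayed recursions one reduces to the two base families $\mathcal{B}_{12m}^{(3)}$ and $\mathcal{B}_{12m+2}^{(3)}$: if $\mathcal{B}_{k'}^{(3)}$ is already a basis of $QM_{k'}^{(3)}$, then $E_{4}\mathcal{B}_{k'}^{(3)}$ is linearly independent in $QM_{k'+4}^{(3)}$ and spans $E_{4}\,QM_{k'}^{(3)}$, so it remains only to see that each extra generator lies outside this subspace — and for a generator $G_{w}^{(3)}$ this is clear, since $G_{w}^{(3)} = E_{4}h$ would force $\nu(h) = \nu(G_{w}^{(3)}) = \dim_{\mathbb{C}} QM_{w}^{(3)} - 1 > \nu_{\max}(3,w-4)$. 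For $\mathcal{B}_{12m+2}^{(3)}$ the argument of Proposition \ref{prop:d1basisB} then goes through verbatim: $\nu(E_{4}^{a}E_{6}^{b}G_{w}^{(3)}) = \nu(G_{w}^{(3)}) = \dim_{\mathbb{C}} QM_{w}^{(3)} - 1$, and as $w$ ranges over the listed weights these orders turn out to be precisely $0,1,\dots,4m$, all distinct. The genuinely new difficulty is $\mathcal{B}_{12m}^{(3)}$, and — through the recursion $\mathcal{B}_{12m+6}^{(3)} = E_{4}\mathcal{B}_{12m+2}^{(3)}\cup\{E_{6}G_{12m}^{(3)},G_{12m+6}^{(3)}\}$ — also the generator $E_{6}G_{12m}^{(3)}$: here, with $w$ the relevant weight ($12m$, resp.\ $12m+6$), the two elements $E_{4}G_{w-4}^{(3)}$ and $E_{6}G_{w-6}^{(3)}$ have the \emph{same} vanishing order $\dim_{\mathbb{C}} QM_{w}^{(3)} - 3$ while no listed element has vanishing order $\dim_{\mathbb{C}} QM_{w}^{(3)} - 2$, so the triangular reduction stalls.

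The main obstacle is therefore to show that $E_{4}G_{w-4}^{(3)} - E_{6}G_{w-6}^{(3)}$ has vanishing order exactly $\dim_{\mathbb{C}} QM_{w}^{(3)} - 2$, equivalently that it is not a scalar multiple of $G_{w}^{(3)}$ (the unique extremal form of weight $w$ up to scalars); granting this, reducing successively against $E_{4}G_{w-4}^{(3)}$, then $E_{4}G_{w-4}^{(3)} - E_{6}G_{w-6}^{(3)}$, then $G_{w}^{(3)}$ completes the triangular reduction and hence the proof. I would settle this by a second-Fourier-coefficient computation: writing $G_{v}^{(3)} = q^{\nu(G_{v}^{(3)})}\bigl(1 + c(v)q + O(q^{2})\bigr)$, one has $\nu(G_{w-4}^{(3)}) = \nu(G_{w-6}^{(3)}) = \dim_{\mathbb{C}} QM_{w}^{(3)}-3$, so the coefficient of $q^{\,\dim_{\mathbb{C}} QM_{w}^{(3)}-2}$ in $E_{4}G_{w-4}^{(3)} - E_{6}G_{w-6}^{(3)}$ equals $c(w-4) - c(w-6) + 744$, and it remains only to verify $c(w-4) - c(w-6) \ne -744$ for $w \equiv 0 \pmod 6$, $w \ge 6$ (e.g.\ $c(8) - c(6) = 16 - 8 = 8$, giving $752 \ne 0$ for $w = 12$). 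The quantities $c(v)$ are read off from the Frobenius solution at $q=0$ of the order-$4$ modular linear differential equation annihilating $G_{v}^{(3)}$ — the specialization of Grabner's equations recalled in Section \ref{sec:furtherdirections} — so the obstruction reduces to an elementary inequality between explicit rational functions of $v$; alternatively one may compare the depth-$3$ components of $E_{4}G_{w-4}^{(3)}$ and $E_{6}G_{w-6}^{(3)}$, which differ for all but finitely many $w$, and dispatch the remaining small cases directly.
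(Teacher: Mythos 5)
Your strategy is the paper's own: the paper proves the depth-one analogue (Proposition \ref{prop:d1basisB}) by the triangular vanishing-order argument combined with $\nu_{\max}(r,w)=\dim_{\mathbb{C}}QM_{w}^{(r)}-1$, omits the depth-three proof, and only flags, in the remark following the proposition, exactly the phenomenon you isolate: in $\mathcal{B}_{12m+6}^{(3)}$ the two elements $E_{4}G_{12m+2}^{(3)}$ and $E_{6}G_{12m}^{(3)}$ both start with $q^{4m}$ while no listed element starts with $q^{4m+1}$ (and, as you correctly add, the same happens in $\mathcal{B}_{12m}^{(3)}$ at order $4m-2$). Your dimension and vanishing-order bookkeeping, the reduction to the base families, the treatment of the single extra generators via $\nu_{\max}$, and the identification of the crux — that $E_{4}G_{w-4}^{(3)}-E_{6}G_{w-6}^{(3)}$ must have vanishing order exactly $\dim_{\mathbb{C}}QM_{w}^{(3)}-2$, i.e.\ must not be proportional to $G_{w}^{(3)}$, for $w\equiv 0\pmod 6$ — are all correct, and your formula for the decisive coefficient, $c(w-4)-c(w-6)+744$, is right.

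The gap is that this crux is asserted rather than proved. You verify $c(8)-c(6)+744\neq 0$ (one instance, $w=12$) and then claim the general case "reduces to an elementary inequality between explicit rational functions of $v$", but you never derive those functions: one needs the second Fourier coefficient of $G_{v}^{(3)}$ for $v\equiv 0,2,6,8\pmod{12}$, and only the classes $v\equiv 0\pmod 6$ satisfy the order-four Kaneko--Zagier equation directly — for the others one must invoke Grabner's separate equations or recursions, which you neither write down nor analyze, so the linear independence of $\mathcal{B}_{12m}^{(3)}$ and $\mathcal{B}_{12m+6}^{(3)}$ is not established for general $m$. The proposed fallback is also unsound as stated: showing that the depth-three components of $E_{4}G_{w-4}^{(3)}$ and $E_{6}G_{w-6}^{(3)}$ differ does not preclude $E_{4}G_{w-4}^{(3)}-E_{6}G_{w-6}^{(3)}=\lambda G_{w}^{(3)}$ with $\lambda\neq 0$, because $G_{w}^{(3)}$ itself has a nonzero depth-three component; one would have to exclude proportionality of $E_{4}a_{3}-E_{6}b_{3}$ to that component, which is not visibly easier. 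So what is missing is a uniform-in-$w$ proof that $c(w-4)-c(w-6)\neq -744$ (or any other argument that the difference is not extremal); to be fair, the paper leaves this very point implicit as well, stating it only as a remark without proof.
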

Note that the Fourier expansion of each element of the basis $\mathcal{B}_{12m+6}^{(3)}$ is slightly different from that of depths 1 and 2. More specifically, there are two forms $E_{4} G_{12m+2}^{(3)}$ and $E_{6} G_{12m}^{(3)}$ whose Fourier expansion is $q^{4m}(1+O(q))$. Therefore, the form $f \in QM_{12m+6}^{(3)}$ such that $f = q^{4m+1}(1+O(q))$ is expressed as a linear combination of $E_{4} G_{12m+2}^{(3)},  E_{6} G_{12m}^{(3)}$, and $G_{12m+6}^{(3)} =q^{4m+2}(1+O(q))$.

\textbf{Depth 4.} The Kaneko--Zagier equation $\theta_{w-4}^{(4)}(f)=0$ is transformed into 
\small
\begin{align*}
&{} z^{4}(1-z)^{2} \frac{d^{5} g}{d z^{5}} + 5 z^{3} (1-z) \left( -\frac{w-24}{12} + \frac{w-36}{12} z  \right) \frac{d^{4} g}{d z^{4}} \\
&{} + 5 z^{2} \left( -\frac{w-10}{2} -  \frac{w^{2}-96 w+1234}{72} z + \frac{w^{2}-60 w+928}{72} z^{2}  \right)  \frac{d^{3} g}{d z^{3}} \\
&{} - 5 z \left\{ \frac{7 w -36}{12} - \frac{w^3-54 w^2+2118 w-14796}{864} z + \frac{(w-24)(w^2-48 w+624)}{864} z^{2} \right\} \frac{d^{2} g}{d z^{2}} \\
&{} + \left\{ -\frac{5 w -12}{12} -\frac{5 (w-5) (w^3-7 w^2+216 w-3600)}{5184} z \right. \\
&{} \left. \qquad +\frac{5 (w-16) (w-20)(w^2-36 w+352)}{20736}  z^{2} \right\}   \frac{d g}{d z} \\
&{} - \frac{(w-4)(w-8) \left\{ -12(w^3+2 w^2+27 w-90) + (w-12)(w-16)(w-20) z \right\}}{248832} g =0,
\end{align*}
\normalsize
where $g(\tau ) = E_{4}(\tau )^{-(w-4)/4} f(\tau )$ and $z=1728/j(\tau )$.
This differential equation is not a hypergeometric differential equation. 
It is known from \cite{kaminaka2021extremal} that $\mathcal{E}_{4}= \emptyset$, that is, there are no normalized extremal quasimodular forms with integral Fourier coefficients. However, we note that $D(G_{8}^{(4)})=3G_{4}^{(2)}G_{6}^{(3)}$ holds as for depths~2~and~3.

\begin{remark}
The fourth- and fifth-order differential equations that appear for depths $3$ and $4$, respectively, are ``rigid" in the sense of Katz (see \cite[Rem.~17]{franc2016hypergeometric}). 
\end{remark}

\begin{proposition}
For any even integer $k\ge2$, a basis $\mathcal{B}_{k}^{(4)}$ of the space $QM_{k}^{(4)}(\Gamma)$ is given by the following set:
\begin{align*}
\mathcal{B}_{12m}^{(4)} &= \left\{ E_{4}^{3(m-\ell)} G_{12\ell }^{(4)} \right\}_{0\le \ell \le m} \cup \left\{ E_{4}^{3(m-\ell)-1} G_{12\ell +4}^{(4)} \right\}_{0\le \ell \le m-1} \\
&{} \quad \cup \left\{ E_{4}^{3(m-\ell -1)} E_{6} G_{12\ell +6}^{(4)} \right\}_{0\le \ell \le m-1} \cup \left\{ E_{4}^{3(m-\ell)-2} G_{12\ell +8}^{(4)} \right\}_{0\le \ell \le m-1} \\
&{} \quad \cup \left\{ E_{4}^{3(m-\ell)-4} E_{6} G_{12\ell +10}^{(4)} \right\}_{0\le \ell \le m-2} \cup \left\{ E_{4} E_{6} G_{12m-10}^{(4)} \right\} , \\
\mathcal{B}_{12m+2}^{(4)} &= \left\{ E_{4}^{3(m-\ell)} G_{12\ell +2}^{(4)} \right\}_{0\le \ell \le m} \cup \left\{ E_{4}^{3(m-\ell)-2} E_{6} G_{12\ell +4}^{(4)} \right\}_{0\le \ell \le m-1} \\
&{} \quad \cup \left\{ E_{4}^{3(m-\ell )-1} G_{12\ell +6}^{(4)} \right\}_{0\le \ell \le m-1}  \cup \left\{ E_{4}^{3(m-\ell -1)} E_{6} G_{12\ell +8}^{(4)} \right\}_{0\le \ell \le m-1} \\
&{} \quad \cup \left\{ E_{4}^{3(m-\ell)-2} G_{12\ell +10}^{(4)} \right\}_{0\le \ell \le m-1} , \\
\mathcal{B}_{12m+4}^{(4)} &= E_{4} \mathcal{B}_{12m}^{(4)} \cup \left\{ G_{12m +4}^{(4)} \right\} , \\ 
\mathcal{B}_{12m+6}^{(4)} &= E_{4} \mathcal{B}_{12m+2}^{(4)} \cup \left\{ E_{6} G_{12m}^{(4)} , G_{12m+6}^{(4)} \right\} , \\
\mathcal{B}_{12m+8}^{(4)} &= E_{4} \mathcal{B}_{12m+4}^{(4)} \cup \left\{ E_{6} G_{12m+2}^{(4)} , G_{12m+8}^{(4)} \right\}, \\
\mathcal{B}_{12m+10}^{(4)} &= E_{4} \mathcal{B}_{12m+6}^{(4)} \cup \left\{ G_{12m +10}^{(4)} \right\} .
\end{align*}
\end{proposition}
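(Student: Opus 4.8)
The plan is to follow the proof of Proposition~\ref{prop:d1basisB} and of its depth~$2$ and depth~$3$ analogues, the only substantive new feature being that distinct elements of $\mathcal{B}_{k}^{(4)}$ may share the same order of vanishing. Put $m = \dim_{\mathbb{C}} QM_{k}^{(4)} = \sum_{j=0}^{4}\dim_{\mathbb{C}} M_{k-2j}$. First I would do the bookkeeping, checking case by case that each displayed set has exactly $m$ elements; this is quickest via the recursions $\mathcal{B}_{k+4}^{(4)} = E_{4}\mathcal{B}_{k}^{(4)} \cup (\text{new forms})$ already in the statement, which reduce everything to the two explicit families, where one counts $5m+1$ elements matching $\dim_{\mathbb{C}} QM_{12m}^{(4)} = \dim_{\mathbb{C}} QM_{12m+2}^{(4)} = 5m+1$. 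Membership is immediate: each listed element is a product of the modular forms $E_{4},E_{6}$ with a normalized extremal quasimodular form $G_{w}^{(4)}$ of depth $\le 4$, hence lies in $QM_{k}^{(4)}(\Gamma)$; here I would invoke Grabner's inductive construction of the $G_{w}^{(4)}$ together with Pellarin's bound $\nu_{\max}(4,w) = \dim_{\mathbb{C}} QM_{w}^{(4)} - 1$, which makes $G_{w}^{(4)}$ well defined and of vanishing order $\dim_{\mathbb{C}} QM_{w}^{(4)} - 1$, with the convention that for the finitely many weights with $QM_{w}^{(4)} = QM_{w}^{(3)}$ (such as $w = 10$) the symbol $G_{w}^{(4)}$ denotes the extremal form of depth $\le 4$.

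It then remains to prove linear independence. Order the $m$ elements $f_{0},\dots,f_{m-1}$ of $\mathcal{B}_{k}^{(4)}$ so that $\nu(f_{0}) \le \dots \le \nu(f_{m-1})$. When these orders are $0,1,\dots,m-1$ the usual triangular argument applies: given $f \in QM_{k}^{(4)}(\Gamma)$, subtract multiples of the $f_{i}$ to reach a form of order $\ge m > \nu_{\max}(4,k)$, which must vanish. The difference from depths $1,2,3$ is that two basis elements can begin at the same power of $q$; inspection of the lists shows every such pair is, after dividing out a common power of $E_{4}$, of the form $\{E_{6}G_{w'}^{(4)},\, E_{4}G_{w'+2}^{(4)}\}$, occurring precisely when $\dim_{\mathbb{C}} QM_{w'}^{(4)} = \dim_{\mathbb{C}} QM_{w'+2}^{(4)}$. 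For such a pair $\{f_{i},f_{i+1}\}$ I would pass to $\{f_{i},\, f_{i}-f_{i+1}\}$, with the same span; both $f_{i}$ and $f_{i+1}$ are $(1+O(q))$ times a normalized extremal form, so have leading coefficient $1$ and $\nu(f_{i}-f_{i+1}) \ge \nu(f_{i})+1$, with equality because the coefficient of $q^{\nu(f_{i})+1}$ differs between the two: writing $G_{w}^{(4)} = q^{\nu}(1 + s_{w}q + \cdots)$, the difference is $s_{w'} - s_{w'+2} - 744$, which one verifies is nonzero from the explicit low-order terms of the $G_{w}^{(4)}$ furnished by the Frobenius ansatz (cf.\ \cite[\S 2.3]{grabner2020quasimodular}). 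After all such replacements the vanishing orders are strictly increasing, so the $f_{i}$ form a basis.

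I expect the crux to be exactly this analysis of coincidences. Unlike in depths $1,2,3$, here $\dim_{\mathbb{C}} QM_{w}^{(4)}$ can stay constant over two consecutive even weights (e.g.\ $w=12$ and $w=14$) and can jump by $2$ in one step (e.g.\ from $w=10$ to $w=12$), so the raw vanishing orders of $\mathcal{B}_{k}^{(4)}$ form the multiset obtained from $\{0,1,\dots,m-1\}$ by replacing certain values $\nu+1$ with a second copy of $\nu$. Establishing that no value occurs three times and that every coincident pair is ``independent to second order'' is a finite verification, but it requires tracking the leading Fourier coefficients of the $G_{w}^{(4)}$ in each residue class of $w$ modulo $12$, plus an inductive step for large $w$; apart from this the argument is routine and parallels Proposition~\ref{prop:d1basisB} --- which is why the corresponding depth~$2$ and depth~$3$ statements were recorded without proof.
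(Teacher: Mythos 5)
Your overall route is the one the paper intends: count the elements, observe membership in $QM_{k}^{(4)}$, and run the leading-order (triangular) argument of Proposition~\ref{prop:d1basisB}, with extra care where two listed forms share the same vanishing order (the paper omits the proof and only flags this phenomenon in the remark following the proposition). The genuine gap is in your analysis of exactly those coincidences, which is where all the content lies. Your claim that the multiset of vanishing orders is obtained by doubling a value $\nu$ and deleting $\nu+1$, and that a single replacement $\{f_i,f_{i+1}\}\mapsto\{f_i,\,f_i-f_{i+1}\}$ makes the orders strictly increasing, is false precisely in the cases the paper's remark warns about. In $\mathcal{B}_{12m}^{(4)}$ the two elements of order $5m-5$ are $E_4^{3}G_{12(m-1)}^{(4)}$ and $E_4E_6G_{12m-10}^{(4)}$, while the orders $5m-4,\,5m-3,\,5m-2$ are already occupied (by $E_4^{2}G_{12m-8}^{(4)}$, $E_6G_{12m-6}^{(4)}$, $E_4G_{12m-4}^{(4)}$) and the missing order is $5m-1$; so $f_i-f_{i+1}$ lands on an occupied order, the orders are not strictly increasing after your replacement, and what must actually be shown is that a cascade of eliminations terminates exactly at $5m-1$ --- equivalently, that the $5\times5$ matrix of coefficients of $q^{5m-5},\dots,q^{5m-1}$ of these five forms is invertible --- which requires control of several Fourier coefficients of the $G_w^{(4)}$ beyond the second. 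The same problem occurs for the pair $\{E_4^{2}G_{12m}^{(4)},E_6G_{12m+2}^{(4)}\}$ of order $5m$ in $\mathcal{B}_{12m+8}^{(4)}$, where $5m+1$ is occupied by $E_4G_{12m+4}^{(4)}$ and the missing order is $5m+2$. Without handling these cascades, neither linear independence nor spanning is established for the weight classes $12m,\,12m+4,\,12m+8$.

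Two further points. First, your classification of the pairs is wrong in half the cases: after removing the common $E_4$-power, the tied pairs occurring in $\mathcal{B}_{12m}^{(4)}$ (hence also in $\mathcal{B}_{12m+4}^{(4)}$, $\mathcal{B}_{12m+8}^{(4)}$) have the shape $\{E_4^{2}G_{w'}^{(4)},\,E_6G_{w'+2}^{(4)}\}$, not $\{E_6G_{w'}^{(4)},\,E_4G_{w'+2}^{(4)}\}$, so the relevant second-coefficient difference is $s_{w'}-s_{w'+2}+984$ rather than $s_{w'}-s_{w'+2}-744$; checking the quantity you wrote would verify the wrong condition there. Second, even for the genuinely adjacent pairs (those in $\mathcal{B}_{12m+2}^{(4)}$ and the new pair in $\mathcal{B}_{12m+6}^{(4)}$), the nonvanishing you invoke is asserted, not proved: it is one condition for every $\ell$, hence an infinite family, so ``verification from the explicit low-order terms'' cannot be finite --- one needs closed-form expressions in $w$ for the relevant low-order coefficients of $G_w^{(4)}$ (for instance from Grabner's recursive construction or a Frobenius ansatz for the fifth-order equation $L_{w,a}^{(4)}$), and the proposal supplies none. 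In short, the strategy matches the paper's omitted proof, but the crux --- resolving the coincident vanishing orders, including the non-adjacent ones --- is misstated and left unproven.
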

The same (but slightly more complicated) phenomenon as in the case of depth $3$ occurs for the Fourier expansion of each element of these bases. For instance, there is no element in the set $\mathcal{B}_{12m}^{(4)}$ for which $q^{5\ell+4}(1+O(q))\;(0\le \ell \le m-1)$, but there are two elements for which $q^{5\ell+3}(1+O(q))\;(0\le \ell \le m-2)$. However, there is only one element for which $q^{5m-2}(1+O(q))$, and there are two elements for which $q^{5m-5}(1+O(q))$.

For a given even integer $12n+a \; (n \in \mathbb{Z}_{\ge0}, a \in 2\mathbb{Z}_{\ge0})$ and depth $r\ge1$, let $12n+a-2r=4s+6t+12u \; (s \in \{0,1,2\}, t \in \{0,1\}, u \in \mathbb{Z}_{\ge 0})$ as before, and express the normalized extremal quasimodular form $G_{12n+a}^{(r)}$ as follows: 
\begin{align*}
G_{12n+a}^{(r)} = \frac{1}{N_{n,a}^{(r)}} E_{2}^{r} E_{4}^{s} E_{6}^{t} \Delta^{u} A_{u,a}^{(r)}(j) + (\text{lower order terms on $E_{2}$}) ,
\end{align*}
where the number $N_{n,a}^{(r)}$ is chosen so that the polynomial $A_{u,a}^{(r)}(X)$ of degree $u$ is the monic polynomial. We call this polynomial the generalized Atkin polynomial. 
Although it is possible to calculate the polynomial $A_{u,a}^{(r)}(X)$ separately for a given $n,a$ and $r$, as already mentioned in Section~\ref{sec:intro}, the existence and uniqueness of $G_{w}^{(r)}$ for the  depth $r\ge5$ are still open, it is not yet clear whether the polynomial $A_{u,a}^{(r)}(X)$ determined by the above equation always exists for $r\ge5$.

The following conjecture is suggested by numerical experiments . 
\begin{conjecture}
Let $p\ge5$ be a prime number and put $p-1=12m+4\delta+6\varepsilon \; (m \in \mathbb{Z}_{\ge 0}, \delta, \varepsilon \in \{0,1\})$. Then we have $A_{m+\delta+\varepsilon,2}^{(1)}(X) \equiv A_{m+\delta+\varepsilon,2r}^{(r)}(X) \pmod{p}$ for $r\ge2$. 
\end{conjecture}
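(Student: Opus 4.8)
The plan is to establish the sharper statement that for \emph{every} depth $r\ge 1$ and every prime $p\ge5$ with $p-1=12m+4\delta+6\varepsilon$ one has
\[ A^{(r)}_{m+\delta+\varepsilon,\,2r}(X)\equiv \mathrm{ss}_{p}(X)\pmod p, \]
where $\mathrm{ss}_{p}(X)=X^{\delta}(X-1728)^{\varepsilon}\,\widetilde{\mathrm{ss}}_{p}(X)\in\mathbb{F}_{p}[X]$ is the supersingular polynomial in characteristic $p$, whose degree is precisely $m+\delta+\varepsilon$; the conjecture is then the comparison of the cases $r$ and $1$. For $r=1$ this is essentially classical, since $A^{(1)}_{n,2}$ is the original Atkin polynomial $A_{n}$ of \cite{kaneko1998supersingular}, and Atkin's congruence identifies $A_{n}(X)\bmod p$ with $\mathrm{ss}_{p}(X)$ exactly at the ``collision'' index $n=\deg\mathrm{ss}_{p}$. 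Inside the present framework this reads as follows: by \eqref{eq:Q2mAVBU} the polynomial $\widetilde{A_{n,2}}(t)$ is the reciprocal of the degree-$n$ Pad\'e denominator (in $j$) of the Atkin moment series $\Phi$ of \eqref{eq:moment-genfunc}, while Proposition \ref{prop:UVLucas} collapses $U$ and $V$ modulo $p$ to $\pi(t)U(t^{p})$ and $\rho(t)U(t^{p})$ with $\deg\pi,\deg\rho\le[p/6]$; the extremality of $G^{(1)}_{12n+2}$ then forces, after clearing the $p$-part of $N_{n,2}$, a polynomial congruence pinning $\widetilde{A_{n,2}}\bmod p$ to the reciprocal of $\widetilde{\mathrm{ss}}_{p}$ (the cancellation of the common factor of $\pi$ and $\rho$ producing the drop from $[p/6]$ to $m+\delta+\varepsilon$). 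It is this mechanism that has to be transported to higher depth.

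For $r\ge2$ the first step is to put $A^{(r)}_{n,2r}$ into the same mould. Using Grabner's description of the modular linear differential equation satisfied by $G^{(r)}_{12n+2r}$ --- explicit for $r\le4$; for $r\ge5$ one works instead with the relevant rigid MLDE, so that the statement there is read conditionally on the existence of $G_{w}^{(r)}$ --- one expands $G^{(r)}_{12n+2r}$, exactly as in the depth $1,2,3$ formulas of Section \ref{sec:moreint}, as a $\mathbb{Q}$-combination of products of generalized hypergeometric series times $E_{2}^{r}E_{4}^{s}E_{6}^{t}\Delta^{n}$, and reads off that $\widetilde{A^{(r)}_{n,2r}}(t)$ is the reciprocal of the degree-$n$ Pad\'e denominator of a depth-$r$ moment series $\Phi^{(r)}$, of the general type considered by Basha--Getz--Nover \cite{basha2004systems}. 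The core of the proof is then to show that, at the truncation governing degree $n=m+\delta+\varepsilon$, the reductions $\Phi^{(r)}\bmod p$ and $\Phi^{(1)}\bmod p$ have the same best degree-$n$ rational approximation. I would obtain this from: (i) expanding $\Phi^{(r)}$ into products of hypergeometric series whose coefficients are products of binomial coefficients; (ii) the Lucas-type congruence \eqref{eq:Lucas} together with the Dwork/Zudilin-type congruences of Lemmas \ref{lem:Dwork57} and \ref{lem:Dwork23}, which reduce each such hypergeometric factor modulo $p$ to a polynomial of degree $\le[p/6]$ times a Frobenius twist $(\cdot)(t^{p})$; and (iii) the observation that what distinguishes depth $r$ from depth $1$ --- the integer parameter shifts and the extra factor $E_{2}^{r}\equiv E_{p+1}^{r}\pmod p$ --- is invisible at this truncation, so that only the depth-independent part survives. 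For $r\le4$ there is an alternative, inductive route: apply \eqref{eq:composition_of_theta} and the accidental lowering relations among the $G^{(r)}_{w}$ to pass from depth $r-1$ to depth $r$, each Serre-derivative-type step shifting the hypergeometric parameters by integers and hence not changing the reduction mod $p$ of the relevant Pad\'e data.

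I expect step (iii) to be the main obstacle. The depth-$1$ argument rests on the precise Lucas/Dwork congruences for the sequence $u_{r}=(6r)!/((3r)!\,r!^{3})$ and on Kaneko--Zagier's identification with $\mathrm{ss}_{p}$; for $r\ge2$ the relevant coefficient sequences are products of several binomial coefficients with parameters in $\tfrac{1}{6}\mathbb{Z}$, $\tfrac{1}{4}\mathbb{Z}$ and $\tfrac{1}{2}\mathbb{Z}$, and the Dwork-type congruences one needs for them do not follow from Lemmas \ref{lem:Dwork57} and \ref{lem:Dwork23} as stated --- already for $p=2,3$ in Section \ref{sec:congUV} a change of variable through \eqref{eq:quadratictransf} was required --- so one would have to prove new instances of Zudilin's lemmas or find further algebraic substitutions. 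A second difficulty is the range $r\ge5$, where $G_{w}^{(r)}$ is not yet known to exist: there one must argue entirely with the MLDE and check that the generalized Atkin polynomial modulo $p$ is well defined, for which the rigidity of these equations should be the right tool. Finally, some careful bookkeeping of the powers of $E_{4},E_{6},\Delta$, of the integer shifts, and of $\mathrm{ord}_{p}N_{n,a}$ is needed to confirm that the matching ``$n=m+\delta+\varepsilon$, weight $=12n+2r$'' is exactly the one for which the two reductions coincide.
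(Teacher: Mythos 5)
First, a point of calibration: the statement you are proving is stated in the paper as a \emph{conjecture}, supported only by numerical experiments; the paper contains no proof of it. So the relevant question is whether your proposal actually closes the gap, and it does not: it is a strategy outline whose central step is exactly the unproven content of the conjecture. The depth-$1$ input you invoke (the Atkin polynomial $A_{n,2}^{(1)}=A_n$ reducing to the supersingular polynomial at the collision index $n=m+\delta+\varepsilon$) is indeed a theorem of Kaneko--Zagier, but everything you build on top of it for $r\ge 2$ is asserted rather than established. Concretely: (a) the claim that $\widetilde{A^{(r)}_{n,2r}}(t)$ is the reciprocal of the denominator of a best degree-$n$ rational approximation to a single ``depth-$r$ moment series $\Phi^{(r)}$'' has no justification. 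For depth $r\ge2$ the extremal form has $r+1$ components $f_0+E_2f_1+\dots+E_2^rf_r$, so the natural approximation problem is a Hermite--Pad\'e problem of the first kind involving $r+1$ polynomials, not a two-term Pad\'e approximation of one power series; the paper itself stresses (end of Section \ref{sec:Gasfpsj} and the last part of Section \ref{sec:furtherdirections}) that the two notions diverge precisely when the depth exceeds $1$, and the Basha--Getz--Nover setting you cite only covers the two-term case $j\psi(j)\Phi$. Without such a characterization, your mechanism ``same reduction of the moment series $\Rightarrow$ same Pad\'e denominator mod $p$'' has nothing to act on. (b) Step (iii), the heart of the argument --- that the integer parameter shifts and the factor $E_2^r\equiv E_{p+1}^r\pmod p$ are ``invisible'' at the relevant truncation --- is precisely the assertion that the depth-$r$ and depth-$1$ generalized Atkin polynomials agree mod $p$; you acknowledge that the Dwork/Zudilin congruences needed for the depth-$r$ coefficient sequences do not follow from Lemmas \ref{lem:Dwork57} and \ref{lem:Dwork23}, and no substitute is supplied. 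Extremality is a statement about vanishing orders relative to $\dim QM_w^{(r)}$, which differ between depths, so the ``same truncation'' matching also needs an argument, not bookkeeping.

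Two further gaps would remain even if (a) and (b) were repaired. You must first show that $A^{(r)}_{m+\delta+\varepsilon,2r}(X)$ is $p$-integral for the prime in question --- its coefficients are genuinely non-integral rationals (see the displayed examples with denominators $4847$, $6736603$, \dots), so the congruence does not even make sense until one controls $\mathrm{ord}_p$ of those denominators and of $N^{(r)}_{n,2r}$. And for $r\ge5$ the existence and uniqueness of $G_w^{(r)}$, hence of $A^{(r)}_{n,2r}$ itself, is open; reading the statement ``conditionally on the relevant rigid MLDE'' changes the statement rather than proving it (though note the conjecture as posed concerns $r\ge2$, and for $r\in\{2,3,4\}$ existence is known, so this last issue is about your strengthened version, not the conjecture itself). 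In summary: your proposal is a reasonable research plan --- and the intermediate target $A^{(r)}_{m+\delta+\varepsilon,2r}\equiv\mathrm{ss}_p\pmod p$ is a sensible sharpening --- but it is not a proof, and the paper offers none to compare it with.
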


\begin{example}
The case of $m+\delta+\varepsilon=2$, that is, the case of $p\in \{11,17,19\}$:
\begin{align*}
A_{2,2}^{(1)}(X) &= X^{2} -1640 X +269280 \equiv 
\begin{cases}
X(X+10) \pmod{11} \\
X(X+9) \pmod{17} \\
(X+1)(X+12) \pmod{19},
\end{cases} \\
A_{2,4}^{(2)}(X) &= X^{2} +\frac{5367564}{4847} X +\frac{97748640}{4847} ,\\
A_{2,6}^{(3)}(X) &= X^{2} -\frac{100925285400}{6736603} X +\frac{184720492440000}{6736603} ,\\
A_{2,8}^{(4)}(X) &= X^2+\frac{13326301537125}{303744733} X +\frac{8760324756150000}{303744733} , \\
A_{2,10}^{(5)}(X) &= X^{2} -\frac{567274769925055704588000}{7988288882724700441} X \\
&{} \quad +\frac{302601299124728270224800000}{7988288882724700441} , \\
A_{2,12}^{(6)}(X) &= X^2+\frac{67508245504783855161034500000}{433955868750758754759533} X \\
&{} \quad -\frac{163976620145430859347886034400000}{433955868750758754759533}. 
\end{align*}
For the prime $p\in \{11,17,19\}$, these polynomials satisfy the above conjecture.
\end{example}

For the depths greater than 5, almost nothing is known. For example, we do not even know what differential equation the extremal quasimodular form satisfies. 
Unlike the case of depth $r\le 4$, as already pointed out in \cite{kaneko2006extremal}, any extremal modular form of weight $w$ and depth $r\ge5$ cannot satisfy the differential equation $\theta_{w-r}^{(r)}(f)=0$.
Of course, it is possible to calculate specific examples separately, e.g., the normalized extremal quasimodular form of weight $10$ and depth $5$
\begin{align*}
G_{10}^{(5)} &= \frac{140 E_{2}^{5} -35 E_{2}^{3} E_{4} -65 E_{2}^{2} E_{6} -33 E_{2} E_{4}^{2} -7 E_{4}E_{6} }{1437004800} \\
&= q^{4} +\frac{144}{11}q^{5}+\frac{936}{11} q^{6} +\frac{4160}{11} q^{7} +\frac{14490}{11} q^{8} +\frac{42432}{11} q^{9} +O(q^{10})
\end{align*}
is the solution of the following modular linear differential equation:
\begin{align*}
&{} \left( E_{4}^{3} - \frac{731087}{4380623} E_{6}^{2} \right) \partial_{5}^{6}(f) + \frac{3649536}{4380623} E_{4}^{2}E_{6} \partial_{5}^{5}(f) \\
&{} - \frac{5 E_{4} (845736619 E_{4}^{3} -170572459 E_{6}^{2} )}{630809712} \partial_{5}^{4}(f) \\
&{} - \frac{5 E_{6} (2032753837 E_{4}^{3} -164191405 E_{6}^{2})}{946214568} \partial_{5}^{3}(f) \\
&{} + \frac{E_{4}^{2} (262935868013 E_{4}^{3} - 746094289517 E_{6}^{2})}{90836598528} \partial_{5}^{2}(f) \\
&{} + \frac{E_{4}E_{6} (80592093937 E_{4}^{3} - 122767956721 E_{6}^{2})}{45418299264}  \partial_{5}(f) \\
&{} + \frac{55 (3672965829 E_{4}^{6} -7414522789 E_{4}^{3} E_{6}^{2} -5174923040 E_{6}^{4})}{13080470188032} f = 0.
\end{align*}
As can be expected from this example, as the weights or depths increases, the coefficients of the differential equation become more complex and the calculations rapidly become unmanageable.
From the observation of some specific examples, it seems that the coefficients cannot be a simple polynomial with $w$ as a variable. 
We also note that the denominator 11 of the Fourier coefficients of $G_{10}^{(5)}$ is greater than the weight 10. (cf. Corollary \ref{cor:denomd1exqmf})

At the end of Section \ref{sec:Gasfpsj}, we pointed out that the normalized extremal quasimodular forms of depth 1 are essentially the remainder of some Hermite--Pad\'{e} approximation. 
In fact, with the help of the ring isomorphism in Theorem \ref{thm:isomQMF}, the extremal quasimodular forms of any depth can be regarded as the remainder of some Hermite--Pad\'{e} approximation (of the first kind) for a suitable function. 
On the other hand, there is an approximation called the Hermite--Pad\'{e} approximation of the second kind (or simultaneous Pad\'{e} approximation) \cite{nesterenko1995hermite}. 
For the depth $r=1$, these two approximations are essentially the same, but not for the depth $\ge2$. What are the properties of the quasimodular forms derived from the simultaneous Pad\'{e} approximation?


\subsection{Case of other groups}
In this subsection we summarize previous works on Atkin orthogonal polynomials and extremal quasimodular forms of depth 1 on the congruence subgroup $\Gamma_{0}(N)$ and the Fricke group $\Gamma_{0}^{*}(N)$ of low-levels.
If the level is less than or equal to 4, the corresponding quasimodular Eisenstein series of weight 2 can be expressed using the hypergeometric series ${}_{2}F_{1}$ (see \cite[Ch.~3]{nakaya2018phdthesis}), so it is probably possible to treat the normalized extremal quasimodular forms in the same way as in this paper. 
On the other hand, for the Fricke groups of levels 5 and 7, the local Heun function appears instead of the hypergeometric series ${}_{2}F_{1}$. 
Because of this difference, it may be difficult to obtain various concrete expressions for these levels as in this paper. 
Note also that in the case of level 1, the Atkin polynomials and the normalized extremal quasimodular forms are directly connected as in \eqref{eq:G12m2AB}, but this is not necessarily the case for level $\ge2$. 

The next table summarizes the previous works by Tsutsumi, Kaneko, Koike, Sakai, and Shimizu. Note that some extremal quasimodular forms also appear in \cite{sakai2014atkin}, and the Kaneko--Zagier equation for $\Gamma_{0}^{*}(2)$ is treated in \cite{kaneko2004quasimodular}, but its quasimodular solution is the quasimodular form on $\Gamma_{0}(2)$. 
\begin{table}[H]
\caption{The previous works related to this paper (with levels).}
\begin{center}
\begin{tabular}{|c|c|c|} \hline
 & Atkin inner product  & Modular linear differential equation \\ 
 & Atkin polynomials & (Extremal) quasimodular forms \\ \hline
  \multirow{2}{*}{$\Gamma_{0}(N)$} & \multirow{2}{*}{Tsutsumi \cite{tsutsumi2000atkininnerproduct,tsutsumi2007atkin} $(N=2,3,4)$}  & Sakai--Tsutsumi \cite{sakaitsutsumi2012extremal} $(N=2,3,4)$ \\
 &  & Sakai--Shimizu \cite{sakai2015modular} $(N=2,3,4)$ \\ \hline 
 \multirow{2}{*}{$\Gamma_{0}^{*}(N)$} & Sakai \cite{sakai2014atkin} $(N=5,7)$ &  Kaneko--Koike \cite{kaneko2004quasimodular} $(N=2)$ \\ \cline{2-3}
 & \multicolumn{2}{c|}{ Sakai \cite{sakai2011atkin} $(N=2,3)$   } \\ \hline
\end{tabular}
\end{center}
\end{table}
Here we mention a few problems.
\begin{itemize}
\item In the case of level 1, three differential equations appeared in \cite[Thm.~2.1]{kaneko2006extremal}, but since there are relations \eqref{eq:recGw0} and \eqref{eq:recGw24}, it is essentially sufficient to consider only the differential equation \eqref{eq:2ndKZeqn}. Extend this result to level $\ge2$ and clarify the relationship between quasimodular solutions of some differential equations using Grabner's method.  
\item Extend Theorem \ref{thm:mainthm} to level $\ge2$ and depth $\ge 1$. 
\end{itemize}

Although of little relevance to quasimodular forms, it is worth noting that a certain basis of the  vector space of modular forms of weight $2n/7\;(n \in \mathbb{Z}_{\ge1})$ on $\Gamma(7)$ is expressed by a generalized hypergeometric series ${}_{3}F_{2}$. For more details on this fact, see \cite{ibukiyama2000modular,franc2016hypergeometric,vidunas2020darboux} and its references. 

\section*{Acknowledgement}
The author thanks to Professor Masanobu Kaneko for many valuable comments on reading an early draft of this paper.


\appendix
\section{Appendices}
\subsection{Table of the integral Fourier coefficients of $G_{w}^{(1)}$}
Put $G_{w}^{(1)} = q^{[w/6]} \left( 1 + \sum_{n=1}^{\infty} a_{w}(n)\, q^{n} \right)$ and then the first few terms of the integral Fourier coefficients of $G_{w}^{(1)}$ are given in the following table. 
\begin{table}[H]
\caption{The first few integral Fourier coefficients $a_{w}(n)$}
\begin{center}
\begin{tabular}{|c|rrrrr|} \hline
 $w\backslash n$ & 1 & 2 & 3 & 4 & 5 \\ \hline
 2 & $-24$ & $-72$ & $-96$ & $-168$ & $-144$   \\
 6 & 18 & 84 & 292 & 630 & 1512   \\
 8 & 66 & 732 & 4228 & 15630 & 48312   \\
 10 & 258 & 6564 & 66052 & 390630 & 1693512  \\
 12 & 56 & 1002 & 9296 & 57708 & 269040   \\
 14 & 128 & 4050 & 58880 & 525300 & 3338496  \\
 16 & 296 & 16602 & 377456 & 4846908 & 41943120  \\
 18 & 99 & 3510 & 64944 & 764874 & 6478758  \\
 20 & 183 & 10134 & 269832 & 4326546 & 47862918  \\
 22 & 339 & 29430 & 1127904 & 24615834 & 355679478  \\
 24 & 144 & 7944 & 235840 & 4451130 & 59405952  \\
 28 & 384 & 44664 & 2460160 & 79196970 & 1693028352  \\
 30 & 190 & 14460 & 608570 & 16463120 & 314562708  \\
 32 & 286 & 29988 & 1652834 & 56608952 & 1335336084  \\
 34 & 430 & 62220 & 4496090 & 195047840 & 5680752948  \\
 38 & 336 & 43587 & 3065648 & 136437750 & 4219436160  \\
 54 & 378 & 62532 & 6109740 & 401161950 & 19083824856  \\
 58 & 618 & 155412 & 21940620 & 2005126350 & 128986599096  \\
 68 & 581 & 147042 & 21956168 & 2203554570 & 160242315903  \\
 80 & 678 & 204756 & 37135249 & 4592036697 & 416237464122  \\
 114 & 855 & 341886 & 85507600 & 15092041050 & 2010698806050  \\
 118 & 1095 & 549246 & 169413760 & 36358101930 & 5819797557810  \\ 
\hline
\end{tabular}
\end{center}
\end{table}


\subsection{Explicit formulas for the coefficients of $P_{n}(t)$ and $Q_{n}(t)$}
We give explicit formulas for the coefficients of the power series $P_{n}(t)$ and $Q_{n}(t)$ that appear in the proof of Theorem \ref{thm:mainthm}. However, as we have already seen in equations \eqref{eq:Q2mAVBU} and \eqref{eq:P2m1AVBU}, we multiply the power series $P_{\text{odd}}(t)$ and  $Q_{\text{even}}(t)$ by the factor $(1-1728t)^{-1/2}$. 
Our main theorem is equivalent to the fact that the following power series belong to $\mathbb{Z}[\![t]\!]$.

$\bullet$ $G_{w}^{(1)} \in \mathbb{Z}[\![q]\!], \, w \in \mathcal{S}_{0} = \{ 12,24 \}$. The coefficients of the power series $P_{0}(t)$ and $P_{2}(t)$ correspond to the sequences A001421 and A145493 in the Online Encyclopedia of Integer Sequences (OEIS) \cite{oeis}, respectively.
\begin{align*}
P_{0}(t) &= \sum_{r=0}^{\infty} \frac{(6r)!}{(3r)!\, r!^{3}} \, t^{r}  = {}_{3}F_{2} \left( \frac{1}{6}, \frac{1}{2}, \frac{5}{6} ; 1, 1 ; 1728 t  \right) \quad ( \Leftrightarrow G_{0}^{(1)} =1 ) \\
&= 1+120 t+83160 t^2+81681600 t^3+93699005400 t^4+O(t^5) , \\
P_{2}(t) &=  \sum_{r=0}^{\infty} \frac{(41r + 77)\,(6r+6)!}{2310\, (3r+3)!\, r!\, (r+2)!^{2}} \, t^{r} \\
&= 1+944 t+1054170 t^2+1297994880 t^3+1700941165560 t^4+O(t^5) , \\
P_{4}(t) &= \sum_{r=0}^{\infty} \frac{(17377 r^{2} +117219 r+193154)\,(6r+12)!}{223092870\, (3r+6)!\, r!\, (r+4)!^{2}}  \, t^{r} \\
&= 1+1800 t+2783760 t^2+4183182720 t^3+6274984354650 t^4 +O(t^5).
\end{align*}

$\bullet$ $G_{w}^{(1)} \in \mathbb{Z}[\![q]\!], \, w \in \mathcal{S}_{6} = \{ 6,18,30,54,114 \}$.
\begin{align*}
&{} (1-1728t)^{-1/2} P_{1}(t) = \sum_{r=0}^{\infty} \frac{(6r+6)!}{120\, (3r+3)!\, r!\, (r+1)!^{2}} \, t^{r} \\
&{} \quad = {}_{3}F_{2} \left( \frac{7}{6}, \frac{3}{2}, \frac{11}{6} ; 2, 2 ; 1728 t  \right) \\
&{} \quad = 1+1386 t+2042040 t^2+3123300180 t^3+4891088081880 t^4  +O(t^5), \\
&{} (1-1728t)^{-1/2} P_{3}(t) =  \sum_{r=0}^{\infty} \frac{(77r + 221)\,(6r+12)!}{4084080\, (3r+6)!\, r!\, (r+3)!^{2}} \, t^{r}  \\
&{} \quad = 1+2235 t+4129650 t^2+7217526960 t^3+12344776903800 t^4 +O(t^5) , \\
&{} (1-1728t)^{-1/2} P_{5}(t) = \sum_{r=0}^{\infty} \frac{(33649 r^{2}+294051 r+633650)\,(6r+18)!}{776363187600\, (3r+9)!\, r!\, (r+5)!^{2}} \, t^{r}   \\
&{} \quad = 1+3094 t+6975504 t^2+13953546090 t^3+26319290241530 t^4 +O(t^5) , \\
&{} (1-1728t)^{-1/2} P_{9}(t) = \sum_{r=0}^{\infty} \frac{p_{9}(r)\, (6r+30)!}{(3r+15)! \, r! \, (r+9)!^{2}}\, t^{r}   \\
&{} \quad = 1+4818 t+14913288 t^2+37889152860 t^3+86182007602320 t^4 +O(t^5) , \\
&{} (1-1728t)^{-1/2} P_{19}(t) =  \sum_{r=0}^{\infty} \frac{p_{19}(r)\, (6r+60)!}{(3r+30)! \, r! \, (r+19)!^{2}}\, t^{r} \\
&{} \quad = 1+9135 t+47828730 t^2+188818914000 t^3+625280243661000 t^4 +O(t^5) , 
\end{align*}
where
\begin{align*}
p_{9}(r) &= \frac{1}{1303566339087601789200} \\
&{} \times (301163357 r^4+8876894690 r^3+97346883895 r^2 \\
&{} \quad +470641033450 r+846250112568), \\
p_{19}(r) &= \frac{1}{7586413113700225869154849509970478998385924877600} \\
&{} \times ( 116055861444395385601913 r^9 \\
&{} \quad +15530138946748752922984725 r^8 \\ 
&{} \quad +920111315629981006299003510 r^7 \\
&{} \quad +31676880792353832401375777850 r^6 \\
&{} \quad +698329420677409164956468289249 r^5 \\
&{} \quad +10222801871323855615909703388405 r^4 \\
&{} \quad +99369498641304011775924341700640 r^3 \\
&{} \quad +618440343527755839046417085216700 r^2 \\
&{} \quad +2236089229125717720580535903583888 r \\
&{} \quad +3578581860690243122001381266421120 ) 
\end{align*}
and these polynomials are irreducible over $\mathbb{Q}$.

$\bullet$ $G_{w}^{(1)} \in \mathbb{Z}[\![q]\!], \, w \in \mathcal{S}_{2} = \{ 2,14,38 \}$.
\begin{align*}
&{} (1-1728t)^{-1/2} Q_{0}(t) = \sum_{r=0}^{\infty} \frac{(6r+1)!}{(3r)!\, r!^{3}} \, t^{r}  = {}_{3}F_{2} \left( \frac{1}{2}, \frac{5}{6}, \frac{7}{6} ; 1, 1 ; 1728 t  \right) \\
&{} \quad = 1+840 t+1081080 t^2+1551950400 t^3+2342475135000 t^4 +O(t^5)  , \\
&{} (1-1728t)^{-1/2} Q_{2}(t) =  \sum_{r=0}^{\infty} \frac{(7r + 13)\,(6r+7)!}{2730\, (3r+3)!\, r!\, (r+2)!^{2}} \, t^{r}  \\
&{} \quad = 1+1760 t+2877930 t^2+4667789280 t^3+7590443164920 t^4 +O(t^5)  , \\
&{} (1-1728t)^{-1/2} Q_{6}(t) \\
&{} \quad = \sum_{r=0}^{\infty} \frac{(1043119 r^3+15220608 r^2+72947639 r+114757350)\, (6r+19)! }{74207381348100\, (3r+9)! \, r! \, (r+6)!^{2} } \, t^{r} \\
&{} \quad = 1+3504 t+8597259 t^2+18287498240 t^3+36144224452050 t^4 +O(t^5) . 
\end{align*}

$\bullet$ $G_{w}^{(1)} \in \mathbb{Z}[\![q]\!], \, w \in \mathcal{S}_{8} = \{ 8,20,32,68,80 \}$. The coefficients of the power series $Q_{1}(t)$ and $Q_{3}(t)$ correspond to the sequences A145492 and A145494 in OEIS, respectively.
\begin{align*}
Q_{1}(t) &= \sum_{r=0}^{\infty} \frac{(8r+7)\,(6r+1)!}{7 (3r)!\, r!\, (r+1)!^{2}} \, t^{r}  \\
&= 1+450 t+394680 t^2+429557700 t^3+522037315800 t^4 +O(t^5) , \\
Q_{3}(t) &=  \sum_{r=0}^{\infty} \frac{(1528 r^{2} +7231 r +8151)\,(6r+7)!}{190190\, (3r+3)!\, r!\, (r+3)!^{2}}  \, t^{r}  \\
&= 1+1335 t+1757970 t^2+2386445040 t^3+3336565609080 t^4 +O(t^5)  , \\
Q_{5}(t) &=  \sum_{r=0}^{\infty} \frac{(1070744 r^3+12418991 r^2+46901365 r+57574750 ) \, (6r+13)!}{34579394850\, (3r+6)! \, r! \, (r+5)!^{2}} \, t^{r}  \\
&= 1+2206 t+3863952 t^2+6319180098 t^3+10079991804410 t^4 +O(t^5)  , \\
Q_{11}(t) &= \sum_{r=0}^{\infty} \frac{q_{11}(r) \, (6r+31)!}{(3r+15)! \, r! \, (r+11)!^{2}} \, t^{r} \\
&= 1+4805 t+14658030 t^2+36441948000 t^3+80761720666320 t^4 +O(t^5)  , \\
Q_{13}(t) &= \sum_{r=0}^{\infty} \frac{q_{13}(r) \, (6r+37)!}{(3r+18)! \, r! \, (r+13)!^{2}} \, t^{r}  \\
&= 1+5670 t+19748832 t^2+54741797937 t^3+132878837538099 t^4 +O(t^5)  ,
\end{align*}
where
\begin{align*}
q_{11}(r) &= \frac{1}{15716643102160534111758180} \\
&{} \times (13252649705176 r^6+665298552506263 r^5 \\
&{} \quad +13797873461407945 r^4 +151287554887490515 r^3  \\
&{} \quad +924734694751472239 r^2 +2986992686186751022 r \\ 
&{} \quad +3982438425105968520) , \\
q_{13}(r) &= \frac{1}{32176447673406729078990845541300} \\
&{} \times (74198322973160504 r^7+5124808625350611463 r^6 \\
&{} \quad +150642927750066254963 r^5 +2442571823969345600665 r^4 \\
&{} \quad +23590276457107577780801 r^3 +135688184492311416306712 r^2  \\
&{} \quad +430315970858396108150652 r +580367220881648001413040)
\end{align*}
and these polynomials are irreducible over $\mathbb{Q}$. 


\bibliographystyle{abbrv}
\bibliography{ref-nakaya2023exqmfd1int.bib}

\begin{thebibliography}{10}

\bibitem{archinard2003exceptonal}
N.~Archinard.
\newblock Exceptional sets of hypergeometric series.
\newblock {\em J. Number Theory}, {\bf 101}(2):244--269, 2003.

\bibitem{bailey1964generalized}
W.~N. Bailey.
\newblock {\em Generalized hypergeometric series}.
\newblock Cambridge Tracts in Mathematics and Mathematical Physics, No. 32.
  Stechert-Hafner, Inc., New York, 1964.

\bibitem{basha2004systems}
S.~Basha, J.~Getz, H.~Nover, and E.~Smith.
\newblock Systems of orthogonal polynomials arising from the modular
  {$j$}-function.
\newblock {\em J. Math. Anal. Appl.}, {\bf 289}(1):336--354, 2004.

\bibitem{cohen1997automorphic}
P.~B. Cohen, Y.~Manin, and D.~Zagier.
\newblock Automorphic pseudodifferential operators.
\newblock In {\em Algebraic aspects of integrable systems}, volume~{\bf 26} of
  {\em Progr. Nonlinear Differential Equations Appl.}, pages 17--47.
  Birkh\"{a}user Boston, Boston, MA, 1997.

\bibitem{cooper2017ramanujan}
S.~Cooper.
\newblock {\em Ramanujan's theta functions}.
\newblock Springer, Cham, 2017.

\bibitem{dijkgraaf1995mirror}
R.~Dijkgraaf.
\newblock Mirror symmetry and elliptic curves.
\newblock In {\em The moduli space of curves ({T}exel {I}sland, 1994)},
  volume~{\bf 129} of {\em Progr. Math.}, pages 149--163. Birkh{\"a}user
  Boston, Boston, MA, 1995.

\bibitem{dwork1969padic}
B.~Dwork.
\newblock {$p$}-adic cycles.
\newblock {\em Inst. Hautes \'{E}tudes Sci. Publ. Math.}, (37):27--115, 1969.

\bibitem{ebisu2012threeterm}
A.~Ebisu.
\newblock Three term relations for the hypergeometric series.
\newblock {\em Funkcial. Ekvac.}, {\bf 55}(2):255--283, 2012.

\bibitem{franc2016hypergeometric}
C.~Franc and G.~Mason.
\newblock Hypergeometric series, modular linear differential equations and
  vector-valued modular forms.
\newblock {\em Ramanujan J.}, {\bf 41}(1-3):233--267, 2016.

\bibitem{grabner2020quasimodular}
P.~J. Grabner.
\newblock Quasimodular forms as solutions of modular differential equations.
\newblock {\em Int. J. Number Theory}, {\bf 16}(10):2233--2274, 2020.

\bibitem{grabner2021asymptotic}
P.~J. Grabner.
\newblock Asymptotic expansions for the coefficients of extremal quasimodular
  forms and a conjecture of {K}aneko and {K}oike.
\newblock {\em Ramanujan J.}, {\bf 57}(3):1021--1041, 2022.

\bibitem{guerzhoy2015mixed}
P.~Guerzhoy.
\newblock A mixed mock modular solution of the {K}aneko-{Z}agier equation.
\newblock {\em Ramanujan J.}, {\bf 36}(1-2):149--164, 2015.

\bibitem{heninger2006integrality}
N.~Heninger, E.~M. Rains, and N.~J.~A. Sloane.
\newblock On the integrality of {$n$}th roots of generating functions.
\newblock {\em J. Combin. Theory Ser. A}, {\bf 113}(8):1732--1745, 2006.

\bibitem{ibukiyama2000modular}
T.~Ibukiyama.
\newblock Modular forms of rational weights and modular varieties.
\newblock {\em Abh. Math. Sem. Univ. Hamburg}, {\bf 70}:315--339, 2000.

\bibitem{kaminaka2021extremal}
T.~Kaminaka and F.~Kato.
\newblock Extremal quasimodular forms of lower depth with integral {F}ourier
  coefficients.
\newblock {\em Kyushu J. Math.}, {\bf 75}(2):351--364, 2021.

\bibitem{kaneko2006modular}
M.~Kaneko.
\newblock On modular forms of weight {$(6n+1)/5$} satisfying a certain
  differential equation.
\newblock In {\em Number theory}, volume~15 of {\em Dev. Math.}, pages 97--102.
  Springer, New York, 2006.

\bibitem{kaneko2003modular}
M.~Kaneko and M.~Koike.
\newblock On modular forms arising from a differential equation of
  hypergeometric type.
\newblock volume~{\bf 7}, pages 145--164. 2003.
\newblock Rankin memorial issues.

\bibitem{kaneko2004quasimodular}
M.~Kaneko and M.~Koike.
\newblock Quasimodular solutions of a differential equation of hypergeometric
  type.
\newblock In {\em Galois theory and modular forms}, volume~{\bf 11} of {\em
  Dev. Math.}, pages 329--336. Kluwer Acad. Publ., Boston, MA, 2004.

\bibitem{kaneko2006extremal}
M.~Kaneko and M.~Koike.
\newblock On extremal quasimodular forms.
\newblock {\em Kyushu J. Math.}, {\bf 60}(2):457--470, 2006.

\bibitem{kaneko1995generalizedJacobitheta}
M.~Kaneko and D.~Zagier.
\newblock A generalized {J}acobi theta function and quasimodular forms.
\newblock In {\em The moduli space of curves ({T}exel {I}sland, 1994)},
  volume~{\bf 129} of {\em Progr. Math.}, pages 165--172. Birkh{\"a}user
  Boston, Boston, MA, 1995.

\bibitem{kaneko1998supersingular}
M.~Kaneko and D.~Zagier.
\newblock Supersingular {$j$}-invariants, hypergeometric series, and {A}tkin's
  orthogonal polynomials.
\newblock In {\em Computational perspectives on number theory ({C}hicago, {IL},
  1995)}, volume~{\bf 7} of {\em AMS/IP Stud. Adv. Math.}, pages 97--126. Amer.
  Math. Soc., Providence, RI, 1998.

\bibitem{mestrovic2014lucas}
R.~Me\v{s}trovi\'{c}.
\newblock Lucas' theorem: its generalizations, extensions and applications
  (1878--2014), 2014.
\newblock arXiv:1409.3820v1[math.NT].

\bibitem{mono2020conjecture}
A.~Mono.
\newblock On a conjecture of {K}aneko and {K}oike, 2020.
\newblock arXiv:2005.06882v1[math.NT].

\bibitem{cruz2018manifold}
J.~A.~C. Morales, H.~Movasati, Y.~Nikdelan, R.~Roychowdhury, and M.~A.~C.
  Torres.
\newblock Manifold ways to {D}arboux-{H}alphen system.
\newblock {\em SIGMA Symmetry Integrability Geom. Methods Appl.}, {\bf
  14}:Paper No. 003, 14 pages, 2018.

\bibitem{movasati2012quasimodular}
H.~Movasati.
\newblock Quasi-modular forms attached to elliptic curves, {I}.
\newblock {\em Ann. Math. Blaise Pascal}, {\bf 19}(2):307--377, 2012.

\bibitem{movasati2017gauss}
H.~Movasati.
\newblock {\em Gauss-{M}anin connection in disguise: {C}alabi-{Y}au modular
  forms}, volume~{\bf 13} of {\em Surveys of Modern Mathematics}.
\newblock International Press, Somerville, MA; Higher Education Press, Beijing,
  2017.
\newblock Appendix C by Hossein Movasati and Khosro M. Shokri; Appendix D by
  Carlos Matheus.

\bibitem{nakaya2018phdthesis}
T.~Nakaya.
\newblock {\em A Study on Supersingular Polynomials, Modular Differential
  Equations, and Hypergeometric Series}.
\newblock PhD thesis, 2018.
\newblock \url{https://doi.org/10.15017/1931729}.

\bibitem{nesterenko1995hermite}
Y.~V. Nesterenko.
\newblock Hermite-{P}ad{\'e} approximants of generalized hypergeometric
  functions.
\newblock {\em Russian Acad. Sci. Sb. Math.}, {\bf 83}(1):189--219, 1995.

\bibitem{oeis}
{OEIS Foundation Inc. (2023)}.
\newblock The {O}n-{L}ine {E}ncyclopedia of {I}nteger {S}equences.
\newblock {\tt https://oeis.org}.

\bibitem{pellarin2020extremal}
F.~Pellarin.
\newblock On extremal quasi-modular forms after {K}aneko and {K}oike.
\newblock {\em Kyushu J. Math.}, 74(2):401--413, 2020.
\newblock With an appendix by Gabriele Nebe.

\bibitem{pellarin2020some}
F.~Pellarin, T.~Rivoal, and J.-A. Weil.
\newblock Some remarks on classical modular forms and hypergeometric series.
\newblock {\tt http://rivoal.perso.math.cnrs.fr/articles/modhyp.pdf}, 2020.

\bibitem{sakai2011atkin}
Y.~Sakai.
\newblock The {A}tkin orthogonal polynomials for the low-level {F}ricke groups
  and their application.
\newblock {\em Int. J. Number Theory}, {\bf 7}(6):1637--1661, 2011.

\bibitem{sakai2014atkin}
Y.~Sakai.
\newblock The {A}tkin orthogonal polynomials for the {F}ricke groups of levels
  5 and 7.
\newblock {\em Int. J. Number Theory}, {\bf 10}(8):2243--2255, 2014.

\bibitem{sakai2015modular}
Y.~Sakai and K.~Shimizu.
\newblock Modular differential equations with regular singularities at elliptic
  points for the {H}ecke congruence subgroups of low-levels.
\newblock {\em Math. J. Okayama Univ.}, {\bf 57}:1--12, 2015.

\bibitem{sakaitsutsumi2012extremal}
Y.~Sakai and H.~Tsutsumi.
\newblock Extremal quasimodular forms for low-level congruence subgroups.
\newblock {\em J. Number Theory}, {\bf 132}(9):1896--1909, 2012.

\bibitem{stiller1988classical}
P.~F. Stiller.
\newblock Classical automorphic forms and hypergeometric functions.
\newblock {\em J. Number Theory}, {\bf 28}(2):219--232, 1988.

\bibitem{tsutsumi2000atkininnerproduct}
H.~Tsutsumi.
\newblock The {A}tkin inner product for {$\Gamma_{0}(N)$}.
\newblock {\em J. Math. Kyoto Univ.}, {\bf 40}(4):751--773, 2000.

\bibitem{tsutsumi2007atkin}
H.~Tsutsumi.
\newblock The {A}tkin orthogonal polynomials for congruence subgroups of low
  levels.
\newblock {\em Ramanujan J.}, {\bf 14}(2):223--247, 2007.

\bibitem{vidunas2020darboux}
R.~Vidunas.
\newblock Darboux evaluations for hypergeometric functions with the projective
  monodromy {${\rm PSL}(2,\mathbb{F}_{7})$}.
\newblock {\em Ramanujan J.}, {\bf 53}(1):85--121, 2020.

\bibitem{zagier1994modularforms}
D.~Zagier.
\newblock Modular forms and differential operators.
\newblock volume~{\bf 104}, pages 57--75. 1994.
\newblock K. G. Ramanathan memorial issue.

\bibitem{zagier2008elliptic}
D.~Zagier.
\newblock Elliptic modular forms and their applications.
\newblock In {\em The 1-2-3 of modular forms}, Universitext, pages 1--103.
  Springer, Berlin, 2008.

\bibitem{zudilin2002integrality}
V.~V. Zudilin.
\newblock Integrality of power expansions related to hypergeometric series.
\newblock {\em Math. Notes}, {\bf 71}(5):604--616, 2002.

\end{thebibliography}


\end{document}